\newcommand{\beq}{\begin{equation}}
\newcommand{\eeq}{\end{equation}}
\newcommand{\bea}{\begin{eqnarray}}
\newcommand{\para}{\mathbin{\!/\mkern-5mu/\!}}
\newcommand{\eea}{\end{eqnarray}}
\newtheoremstyle{mystyle}
  {}
  {}
  {\itshape}
  {}
  {\bfseries}
  {.}
  { }
  {\thmname{#1}\thmnumber{ #2}\thmnote{ (#3)}}
\theoremstyle{mystyle}
\newtheoremstyle{mystyledefinition}
  {}
  {}
  {}
  {}
  {\bfseries}
  {.}
  { }
  {\thmname{#1}\thmnumber{ #2}\thmnote{ (#3)}}
\theoremstyle{mystyledefinition}
\newtheorem{thm}{Theorem}[section]
\newtheorem{corollary}[thm]{Corollary}
\newtheorem{lemma}[thm]{Lemma}
\newtheorem{assumption}{Assumption}
\newtheorem{lem}[thm]{Lemma}
\newtheorem{prop}[thm]{Proposition}
\newtheorem{definition}[thm]{Definition}
\newtheorem{proposition}[thm]{Proposition}
\newtheorem{theorem}{Theorem}
\theoremstyle{mystyledefinition}
\newtheorem{example}{Example}[section]
\newtheorem{rem}[thm]{Remark}
\newtheorem{remark}[thm]{Remark}
\newtheorem{notation}[thm]{Notation}
\numberwithin{equation}{section}
\newtheorem*{acknowledgement}{Acknowledgement}
\newcommand{\ee}{\ell}
\newcommand{\bM}{\mathbb M}
\newcommand{\Ho}{\mathcal H}
\newcommand{\M}{\mathbb M}
\newcommand{\B}{\mathbb B}
\newcommand{\R}{\mathbb R}
\newcommand{\ve}{\varepsilon}
\begin{document}

\title[Integration by parts]{Integration by parts and quasi-invariance for the horizontal Wiener measure on foliated compact manifolds}

\author[Baudoin]{Fabrice Baudoin{$^{\dag}$}}
\thanks{\footnotemark {$\dag$} Research was supported in part by NSF Grant DMS-1660031}
\address{$^{\dag} $
Department of Mathematics
\\
University of Connecticut
\\
Storrs, CT 06269,  U.S.A.}

\author[Feng]{Qi Feng{$^{\ddag}$}}
\address{$^{\ddag}$
Department of Mathematics
\\
University of Southern California
\\
Los Angeles, CA 90089-2532,  U.S.A.}

\author[Gordina]{Maria Gordina{$^{\dag\dag}$}}
\thanks{\footnotemark {$\dag\dag$} Research was supported in part by NSF Grant DMS-1405169, DMS-1712427 and the Simons Fellowship.}
\address{$^{\dag\dag}$
Department of Mathematics
\\
University of Connecticut
\\
Storrs, CT 06269,  U.S.A.}
\email{maria.gordina@uconn.edu}

\keywords{Wiener measure; Cameron-Martin theorem; Quasi-invariance; sub-Riemannian geometry, Riemannian foliation}

\subjclass{Primary 58G32; Secondary 58J65, 53C12, 53C17}


\begin{abstract}
We prove several sub-Riemannian versions of  Driver's integration by parts formula which first appeared in \cite{Driver1992b}. Namely, our results are for the horizontal Wiener measure on a totally geodesic Riemannian foliation equipped with a sub-Riemannian structure. It is also shown that the horizontal Wiener measure is quasi-invariant under the action of flows generated by suitable tangent processes.

\end{abstract}

\maketitle

\tableofcontents


\section{Introduction}

\subsection{Background}

In this paper we study quasi-invariance properties and related integration by parts formulas for the horizontal Wiener measure on a foliated Riemannian manifold equipped with a sub-Riemannian structure. These are most closely related to the well-known results by B.~Driver  \cite{Driver1992b} who established such properties for the Wiener measure on a path space over a compact Riemannian manifold. Quasi-invariance in such settings can be viewed as a curved version of the classical Cameron-Martin  theorem for the Euclidean space. While the techniques developed for path spaces over Riemannian manifolds are not easily adaptable to the sub-Riemannian case we consider, we take advantage of the recent advances in this field. The geometric and stochastic analysis of sub-Riemannian structures on foliated manifolds has attracted a lot of attention in the past few years (see for instance \cite{BaudoinKimWang2016, Baudoin2017b, Elworthy2014, GordinaLaetsch2016a, Grong, GrongThalmaier2016a, GrongThalmaier2016b, ThalmaierCIRMLectures2016}).

In particular, we make use of the tools such as Weitzenb\"{o}ck formulas for the sub-Laplacian  extending results by J.-M.~Bismut, B.~Driver et al to foliated Riemannian manifolds. More precisely, the first progress in developing geometric techniques in the sub-Riemannian setting has been made in \cite{BaudoinGarofalo2017}, where  a version of Bochner's  formula for the sub-Laplacian was established and generalized curvature-dimension conditions have been studied. This Bochner-Weitzenb\"ock  formula was then used in \cite{Baudoin2017b} to develop a sub-Riemannian stochastic calculus. One of the difficulties in this case is that, a priori, there is no canonical connection on such manifolds such as the Levi-Civita connection in the Riemannian case.  However,  \cite{Baudoin2017b} introduces a one-parameter family of metric connections associated with Bochner's formula  proved in \cite{BaudoinGarofalo2017} and shows that the derivative of the sub-Riemannian heat semigroup can be expressed in terms of a damped stochastic parallel transport.

It should be noted that these connections do not preserve the geometry of the foliation in general. In particular, the corresponding parallel transport does not necessarily transform a horizontal vector into a horizontal vector, that is, these connections in general are not horizontal. As a consequence, establishing an integration by parts formulas for directional derivatives on the path space of the horizontal Brownian motion, similarly to Driver's integration by parts formula in \cite{Driver1992b} for the Riemannian Brownian motion, is not straightforward. As a result, the integration by parts formula we prove in the current paper can not be simply deduced from the derivative formula for the corresponding semigroup by applying the standard techniques of covariant stochastic analysis on manifolds as presented for instance in \cite[Section 4]{ElworthyLeJanLiBook1999}, in particular  \cite[Theorems 4.1.1, 4.1.2]{ElworthyLeJanLiBook1999}. A different approach to proving quasi-invariance in an infinite-dimensional sub-Riemannian setting has been used in \cite{BaudoinGordinaMelcher2013}.

Analysis on path and loop spaces has been developed over several decades, and we will not be able to refer to all the relevant publications, but we mention some which are closer to the subject and techniques of this paper. In particular, J.-M. Bismut's book \cite{BismutBook1984} contains an integration by parts formula on a path space over a compact Riemannian manifold. His methods were based on the Malliavin calculus and Bismut's motivation was to deal with a hypoelliptic setting as described in {\cite[Section 5]{BismutBook1984}}. A breakthrough has been achieved by B.~Driver \cite{Driver1992b}, who established quasi-invariance properties of the Wiener measure over a compact Riemannian manifold, and as a consequence an integration by parts formula. This work has been simplified and extended by E.~Hsu \cite{Hsu1995b}, and also approached by O. B. Enchev and D. W. Stroock in \cite{EnchevStroock1995a}. A review of these techniques can be found in \cite{Hsu1999a}. In \cite{Hsu2002a, HsuOuyang2009} the noncompact case has been studied. Let us observe here that B.~Driver in \cite{Driver1992b} and later E.~Hsu in \cite{Hsu1995b} have considered connections on a Riemannian manifold which are metric-compatible, but not necessarily torsion-free. This is very relevant in our setting of a foliated Riemannian manifold equipped with a sub-Riemannian structure, because on sub-Riemannian manifolds the natural connections are not torsion-free. A different approach to analysis on Riemannian path space can be found in \cite{CruzeiroMalliavin1996}, where tangent processes, Markovian connections, structure equations and other elements of what the authors call the renormalized differential geometry on the path space have been introduced.

\subsection{Main results and organization of the paper}\label{s.MainResults}


We now explain in more detail our main results without the technical details, and describe how the paper is organized.
Section \ref{p.1} studies quasi-invariance properties for the horizontal Wiener measure of a Riemannian foliation, and in Section \ref{p.2} we prove  integration by parts formulas. Although quasi-invariance properties and integration by parts formulas are intimately related and actually equivalent in many settings (see \cite{Bell2006a, Driver1992b, Driver1997a}), we use very different techniques and approaches in these two sections. To prove quasi-invariance, we develop a stochastic calculus of variations for the horizontal Brownian  motion on a foliation in the spirit of \cite{CruzeiroMalliavin1996, Driver1992b, Hsu1995b}, whereas to prove integration by parts formulas, we shall make use of Markovian techniques and martingale methods as presented for instance in \cite[Section 4]{ElworthyLeJanLiBook1999}.

Let $\left( \M, g, \mathcal{F} \right)$ be a smooth connected compact Riemannian manifold of dimension $n+m$ equipped with a totally geodesic and bundle-like foliation $\mathcal{F}$ by $m$-dimensional leaves as described in Section \ref{s.Notation}. On such manifolds, one can define a horizontal Laplacian $L$ according to \cite[Section 2.2, Section 2.3]{BaudoinEMS2014}), allowing to define a horizontal Brownian motion as the diffusion on $\M$ with generator $\frac{1}{2} L$, as we describe in Section \ref{Dirichlet}. The distribution of the horizontal Brownian motion is called the \emph{horizontal Wiener measure}.

Recall that for a Riemannian manifold $(\M, g)$, for a given metric connection one can construct a development map $B \longmapsto W$, where $B$ is a Brownian motion in $\mathbb R^{n+m}$ and $W$ the Brownian motion on the manifold $(\M,g)$, see for instance \cite[Theorem 3.4]{Driver1992b}.
We construct development maps in the setting of a totally geodesic Riemannian foliation even though we do not have a Levi-Civita connection in this sub-Riemannian setting.

The foliation structure on $\M$ induces a natural splitting of the tangent bundle into a vertical and horizontal subbundles $\mathcal{V}$ and $\mathcal{H}$ as described in Section \ref{ss.HorizVertSubbundles}. This allows us to define horizontal Brownian motion with respect to this structure. In Section \ref{Eels-Elworthy} we show that there exist metric connections on $\M$ which are compatible with the foliation $\mathcal{F}$ in such a way that the above development map sends $(\beta, 0)$  to a horizontal Brownian motion of the foliation, where $\beta$ is a Brownian motion in $\mathbb R^n$. In particular, the horizontal Brownian motion $W$ on $\M$ constructed in this way is a semimartingale on $\M$ and it becomes possible to develop a horizontal stochastic calculus of variations. In this paper, the map $\beta \longmapsto W$ is referred  to as the \emph{horizontal stochastic development map}. The main result of Section \ref{s.HorizontCalculus} is Theorem \ref{intro 1} that characterizes the variations of horizontal paths (i.e. paths transverse to the leaves of the foliation).


Before we can formulate our first main result, we need to describe some of the notation used. For details the reader is referred to Section \ref{s.HorizontCalculus}. Let $D$ be a metric connection on $\left( \M, g, \mathcal{F} \right)$ adapted to the foliation structure as described by Assumption \ref{property D}. An example of such a connection is the Bott connection introduced in Section \ref{Bott connection}.

The first observation is that the connection $D$ allows us to define vector fields on the space $W^\infty_{0}\left( \M \right)$ of smooth $\M$-valued paths  on the interval $[0,1]$ as follows.
For $v \in W^\infty_{0}\left( \mathbb{R}^{n+m} \right)$, the space of smooth $\mathbb{R}^{n+m}$-valued paths, we denote by $\mathbf D_v$ the vector field on the space of smooth paths  $[0,1]\to \M$ defined by
\[
\mathbf D_v (\gamma)_s=u_s(\gamma) v_s,
\]
where $u$ is the $D$-lift of $\gamma$ to the orthonormal frame in  the orthonormal frame bundle $\mathcal{O} (\M)$. In addition, we can use the connection $D$  to introduce the corresponding \emph{development map} $\phi: W^\infty_{0}\left( \mathbb{R}^{n+m} \right) \longrightarrow W^\infty_{0}\left( \M \right)$ as defined in Definition \ref{develop}. The inverse map $\phi^{-1}: W^\infty_{0}\left( \M \right) \longrightarrow W^\infty_{0}\left( \mathbb{R}^{n+m} \right)$ is referred to as an \emph{anti-development map}. We also define horizontal development and anti-development maps in Definition \ref{d.HorizSmoothItoMaps}.

In addition the connection $D$ can be used to lift vector fields on $\M$ to vector fields on $\mathcal{O}(\M)$ consistent with the foliation structure as explained in Notation \ref{HVframe}. We denote by $A, V$ the fundamental   vector fields on $\mathcal{O} (\M)$ associated with this $D$-lift. For details we refer the reader to Notation \ref{HVframe}. As motivation for the semimartingale version, we start with a theorem combining the results in the smooth setting.

\begin{theorem}[Theorem \ref{Hsu deterministic}, Theorem \ref{tangent deterministic}]\label{intro 1}
Let $D$ be a metric connection on $\left( \M, g, \mathcal{F} \right)$ adapted to the foliation structure as described by Assumption \ref{property D}. For a smooth path $v$ on $\mathbb{R}^{n+m}$ , we let $\{ \zeta_t^v, t \in \mathbb{R} \}$ be the flow generated by the vector field $\mathbf D_v$ on $W^\infty_{0}\left( \M \right)$. Then for a smooth horizontal path $\gamma$ on $\M$
\[
\left.\frac{d}{dt}\right|_{ t=0} \phi^{-1} (\zeta_t^v \gamma)_s  \in \mathbb{R}^n, \quad  s \in [0, 1]
\]
if and only if the path
\begin{equation}\label{tangent 1}
 v(s)-\int_0^s T_{u_r} (  A \left(d\omega^\mathcal{H}_r\right),  Av(r)), \quad s \in [0,1]  \text{ takes values in } \mathbb{R}^n,
\end{equation}
that is, it is horizontal. Here $\omega^\mathcal{H}$ is the horizontal anti-development of the horizontal path $\gamma$, and $T$ is the torsion of the Bott connection.

Moreover, if \eqref{tangent 1} is satisfied, then
\[
\left.\frac{d}{dt}\right|_{ t=0} \phi^{-1} (\zeta_t^v \gamma)_s  =p_v(\omega^\mathcal{H})_s,
\]
where
\begin{align*}
 p_v(\omega^\mathcal{H})_s & = v(s)-\int_0^s T^D_{u_r} (  A \left( d\omega^\mathcal{H}_r \right),  Av(r)+Vv(r)) - \\
 & \int_0^s \left(  \int_0^r \Omega^D_{u_\tau}( A   \left(d \omega^\mathcal{H}_\tau\right), A v(\tau)+V v (\tau)) \right)   d \omega^\mathcal{H}_r. \notag
\end{align*}
Here  $T^D$ is the torsion form of the connection $D$ and  $\Omega^D$ its curvature  form.
\end{theorem}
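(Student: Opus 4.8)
The plan is to realize the variation $\gamma_t:=\zeta_t^v\gamma$ through its development frame and to differentiate the Cartan development equation in the flow parameter $t$, exactly as in Hsu's computation of the derivative of the It\^o map, but on the frame bundle $\mathcal{O}(\M)$. For each $t$ let $u(t,\cdot)$ be the $D$-lift of $\gamma_t$ to $\mathcal{O}(\M)$ started at a fixed frame $u_o$ over $o=\gamma(0)$; equivalently, writing $w_t:=\phi^{-1}(\gamma_t)$ and letting $\theta$ denote the canonical $\mathbb R^{n+m}$-valued form and $\varpi$ the $\mathfrak{so}(n+m)$-valued connection form of $D$ on $\mathcal{O}(\M)$, the curve $u(t,\cdot)$ is the unique $\varpi$-horizontal curve with $\theta(\partial_s u(t,s))=\dot w_t(s)$. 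Put $U:=\partial_s u$ and $Z:=\partial_t u$, so that $\varpi(U)\equiv 0$ and $\theta(U)=\dot w_t$. The fact that makes everything explicit is that $Z$ has a $t$-independent $\theta$-component: since $\zeta^v$ is the flow of $\mathbf D_v$, one has $\pi_*Z_{(t,s)}=\partial_t\gamma_t(s)=u(t,s)\,v_s$, hence $\theta(Z_{(t,s)})=v_s$ for all $t$; moreover $v_0=0$ forces $\gamma_t(0)\equiv o$, so $Z_{(t,0)}=0$ and $\varpi(Z_{(t,0)})=0$.

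Next I apply the structure equations of $D$, $d\theta=-\varpi\wedge\theta+T^D$ and $d\varpi=-\tfrac12[\varpi\wedge\varpi]+\Omega^D$, to the smooth map $u\colon[0,T]\times[0,1]\to\mathcal{O}(\M)$. Pulling a $1$-form $\alpha$ back along $u$ and using $[\partial_t,\partial_s]=0$ gives $\partial_t\bigl(\alpha(\partial_s u)\bigr)-\partial_s\bigl(\alpha(\partial_t u)\bigr)=d\alpha(Z,U)$. With $\alpha=\varpi$ and $\varpi(U)\equiv 0$ this yields $\partial_s\varpi(Z)=-\Omega^D(Z,U)$ and $\varpi(Z)|_{s=0}=0$, hence $\varpi(Z_{(t,s)})=-\int_0^s\Omega^D(Z,U)\,dr$; with $\alpha=\theta$ and $\varpi(U)\equiv 0$ it yields $\partial_t\theta(U)=\partial_s\theta(Z)-\varpi(Z)\,\theta(U)+T^D(Z,U)=v'(s)-\varpi(Z)\,\theta(U)+T^D(Z,U)$. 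Since $w_t(s)=\int_0^s\theta(\partial_r u(t,r))\,dr$ and $w_0=\omega^{\mathcal H}$ (the anti-development of a horizontal path is $\mathbb R^n$-valued because $D$ keeps frames adapted), integrating in $s$ at $t=0$ gives $\left.\frac{d}{dt}\right|_{t=0}\phi^{-1}(\zeta_t^v\gamma)_s=v(s)-\int_0^s\varpi(Z_{(0,r)})\,d\omega^{\mathcal H}_r+\int_0^s T^D(Z_{(0,r)},U_{(0,r)})\,dr$. Now $T^D$ and $\Omega^D$ are tensorial (horizontal) $2$-forms on $\mathcal{O}(\M)$, so $Z_{(0,r)}$ may be replaced by its $\varpi$-horizontal part, which projects onto $u_rv_r$ and hence equals $Av(r)+Vv(r)$ in the notation of Notation \ref{HVframe}, while $U_{(0,r)}=A(d\omega^{\mathcal H}_r)$ because $\gamma$ is horizontal. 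Substituting the expression for $\varpi(Z_{(0,r)})$ and using antisymmetry of $T^D$ and $\Omega^D$ turns the right-hand side into exactly $p_v(\omega^{\mathcal H})_s$; this is the second assertion (Theorem \ref{Hsu deterministic}).

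For the characterization I start from the formula just obtained and test membership in $\mathbb R^n$ term by term. The iterated-curvature term is harmless: since $D$ is adapted to the foliation (Assumption \ref{property D}), $\Omega^D$ takes values in $\mathfrak{so}(n)\oplus\mathfrak{so}(m)\subset\mathfrak{so}(n+m)$, hence preserves the splitting $\mathbb R^{n+m}=\mathbb R^n\oplus\mathbb R^m$, so, acting on $d\omega^{\mathcal H}_r\in\mathbb R^n$, it keeps that term in $\mathbb R^n$ for every $v$. It remains to compare $\int_0^s T^D_{u_r}(A(d\omega^{\mathcal H}_r),Av(r)+Vv(r))$ with $\int_0^s T_{u_r}(A(d\omega^{\mathcal H}_r),Av(r))$, and here I invoke the explicit description of torsions of connections adapted to a totally geodesic, bundle-like foliation from Section \ref{Bott connection}: the mixed torsion $T^D(\mathcal H,\mathcal V)$ contributes only a horizontal vector (and vanishes for the Bott connection), while the purely horizontal torsions of $D$ and of the Bott connection agree modulo a horizontal correction (both being governed by $-[\cdot\,,\cdot]_{\mathcal V}$). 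Consequently $p_v(\omega^{\mathcal H})_s$ and $v(s)-\int_0^s T_{u_r}(A(d\omega^{\mathcal H}_r),Av(r))$ differ by an $\mathbb R^n$-valued path, so one lies in $\mathbb R^n$ if and only if the other does, which is the claimed equivalence (Theorem \ref{tangent deterministic}).

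The routine-but-delicate step is the sign and index bookkeeping in the structure-equation computation (the conventions for $\varpi\wedge\theta$, the $\mathfrak{so}(n+m)$-action on $\theta$-values, and the antisymmetrizations that move $A(d\omega^{\mathcal H})$ into the first slot). The genuine content, and the step I expect to be the main obstacle, is the torsion comparison in the last paragraph: pinning down precisely which components of $T^D$ and of the Bott torsion $T$ are horizontal and which are vertical, and verifying that the discrepancy between them is horizontal, is exactly where the hypotheses that the foliation is totally geodesic and bundle-like, together with Assumption \ref{property D} on $D$, are used essentially; the rest is the Riemannian development calculus transported verbatim to $\mathcal{O}(\M)$.
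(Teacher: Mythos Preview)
Your argument is correct and follows the same route as the paper. The derivation of the formula for $p_v(\omega^{\mathcal H})$ via the Cartan structure equations on $\mathcal{O}(\M)$ is precisely the computation underlying \cite[Theorem~2.1]{Hsu1995b}, which the paper simply cites as Theorem~\ref{Hsu deterministic} rather than re-proving. For the characterization (Theorem~\ref{tangent deterministic}), the paper introduces the difference tensor $J=D-\nabla$ and computes $T^D(X,Y)=T(X,Y)+J_XY-J_YX$, then observes that for $X\in\Gamma^\infty(\mathcal H)$ one has $J_X=0$ (because $D_{\mathcal H}=\nabla_{\mathcal H}$) and $J_YX\in\Gamma^\infty(\mathcal H)$ (because $D$ is adapted), which is exactly your component-by-component analysis phrased more invariantly. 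One small sharpening: you write that the purely horizontal torsions of $D$ and $\nabla$ ``agree modulo a horizontal correction'', but in fact they agree exactly, since $D_XY=\nabla_XY$ for $X,Y\in\Gamma^\infty(\mathcal H)$ forces $T^D(X,Y)=T(X,Y)$; this does not affect your conclusion, since only the vertical component matters.
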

If \eqref{tangent 1} is satisfied, we will say that the path $v$ is \emph{tangent} to the horizontal path $\gamma$. We stress that in  \eqref{tangent 1} we use the torsion of the Bott connection, not the torsion of the connection $D$. Thus Theorem \ref{intro 1} shows that the notion of a tangent path is independent of the connection $D$, as long as it satisfies Assumption \ref{property D}.

Given a horizontal path, it is easy to construct tangent paths to this path. Indeed, we show in Lemma \ref{construction tangent} that if $\omega^\mathcal{H}$ is a smooth path in $\mathbb R^n$ then for every smooth path $h$ in $\mathbb{R}^n$
\begin{align}\label{tau intro}
\tau_h (\omega^\mathcal{H})_{s}:=h(s)+\int_0^s T_{u_r} (  A \left(d\omega^\mathcal{H}_r \right),  Ah(r))
\end{align}
is a tangent path to $\phi(\omega^\mathcal{H})$, where $u$ denotes the lift of  $\phi(\omega^\mathcal{H})$.

In Section \ref{s.QI} we use Malliavin's \textit{principe de transfert} ansatz (see  \cite[Part IV Chapter VIII]{MalliavinStochasticAnalysisBook}) to extend the definitions for $p_v$ and $\tau_h$ to semimartingale paths by replacing integration against smooth paths by Stratonovich stochastic integrals with respect to semimartingales. More precisely, we work on the probability space $\left( W_0(\mathbb{R}^n), \mathcal{B}, \mu_\mathcal{H} \right)$, where $\mathcal{B}$ is the Borel $\sigma$-algebra on the path space $W_0(\mathbb{R}^n)$ of continuous paths $[0,1] \to \mathbb{R}^n$ starting at $0$, $\mu_\mathcal{H}$ is the Wiener measure. The measure $\mu_W$ can be also described as the distribution of the horizontal Brownian motion on $\M$.

Given a deterministic Cameron-Martin path $h:[0, 1]\to \mathbb{R}^n$, one can then consider the $\mathbb{R}^{n+m}$-valued semimartingale
\[
\tau_h (\omega^\mathcal{H})_s:=h(s)+\int_0^s T_{u_r} (  A \circ d\omega^\mathcal{H}_r,  Ah(r)),
\]
where $\omega^\mathcal{H}$ is the coordinate process and $\circ d\omega^\mathcal{H}$ denotes the Stratonovich integral.  Note that $\tau_h$ is defined  $\mu_\mathcal{H}$-a.s. One can then think of $\tau_h (\omega^\mathcal{H})$ as a tangent process to the horizontal Brownian motion of the foliation. We will view $\tau_h: W_0(\mathbb{R}^n) \to W_0(\mathbb{R}^{n+m})$ as an adapted process with respect to the natural filtration $\left\{ \mathcal{B}_{s}, 0 \leqslant s \leqslant 1 \right\}$ generated by the horizontal Brownian motion in $\mathbb{R}^{n+m}$.  Notice that $\tau_h$ is really an equivalence class of processes with two processes being equivalent if they are equal $\mu_\mathcal{H}$-a.s. similarly to \cite[p. 425]{Hsu1995b}.
Thus when we say that a map is  defined $\mu_\mathcal{H}$-a.s. we mean that we are actually working with  equivalence classes of maps. It will be an important part of our results that the flows and the compositions we consider  preserve the equivalence classes we are working with, but for simplicity of the presentation, those considerations will remain in the background in our discussions similarly to \cite{Hsu1995b}. This aspect is discussed more thoroughly in \cite{Driver1992b}.

Similarly, given an $\mathbb{R}^{n+m}$-valued semimartingale $v$, one can define the semimartingale
\begin{align*}
 p_v(\omega^\mathcal{H})_s & = v(s)-\int_0^s T^D_{u_r} (  A \circ d\omega^\mathcal{H}_r,  Av(r)+Vv(r)) - \\
 & \int_0^s \left(  \int_0^r \Omega^D_{u_\tau}( A   \circ d \omega^\mathcal{H}_\tau , A v(\tau)+V v (\tau)) \right)   \circ d \omega^\mathcal{H}_r. \notag
\end{align*}

The main results of Section \ref{s.QI} include Theorem \ref{flow euclidean} and Theorem \ref{main} which can be summarized as follows. Here we use the notion of stochastic horizontal development and anti-development $\phi_\mathcal{H}$ and $\phi_\mathcal{H}^{-1}$  as defined in Definition \ref{d.HorizStochItoMaps}.



\begin{theorem} [Theorem \ref{flow euclidean},  Theorem \ref{main}] \label{intro 2}

There exists a  family of semimartingales $\{ \nu_t^h, t \in \mathbb{R}\}$ such that for each fixed $t$ the random variable $\nu_t^h: W_{0}\left( \mathbb{R}^n \right) \longrightarrow W_{0}\left( \mathbb{R}^n \right)$  can be regarded as a $\mu_\mathcal{H}$-a.~s. defined map from the path space to itself.  In particular, $s\rightarrow \nu_t^h(s)$ is a $\mathbb{R}^n$-valued semi-martingale over the probability space $W_0(\mathbb{R}^n)$. In addition $\nu_t^h$ has the group property. Thus we can regard  $\{ \nu_t^h, t \in \mathbb{R}\}$ as the flow on $W_{0}\left( \mathbb{R}^n \right)$ generated by $p_{\tau_h}: W_{0}\left( \mathbb{R}^n \right) \to W_{0}\left( \mathbb{R}^n \right)$ which is defined $\mu_\mathcal{H}$-a.s. 

Moreover, the measure  $\mu_\mathcal{H}$ is quasi-invariant under this flow, that is, the law $\mu_{W}$ of the horizontal Brownian motion on $\M$  is quasi-invariant under the  $\mu_W$-a.s.  defined flow $\zeta^h_t = \phi_\mathcal{H} \circ \nu_t^h \circ \phi_\mathcal{H}^{-1} : W_{x_0}\left( \mathbb{M} \right) \to W_{x_0}\left( \mathbb{M} \right)$, $t \in \mathbb{R}$. Here $\phi_\mathcal{H}$ and $\phi_\mathcal{H}^{-1}$ are horizontal stochastic development and anti-development map correspondingly.
\end{theorem}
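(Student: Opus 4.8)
The plan is to follow the classical Cameron--Martin/Driver strategy adapted to the sub-Riemannian development maps constructed earlier in the paper. First I would establish the \emph{flow} part of the statement at the level of the Euclidean path space $W_0(\mathbb{R}^n)$. Since $p_{\tau_h}$ is built from $\tau_h$ and the torsion and curvature forms via Stratonovich integrals against the coordinate semimartingale $\omega^\mathcal{H}$, the ODE-in-$t$ defining the flow, $\tfrac{d}{dt}\nu_t^h = p_{\tau_{h}}(\nu_t^h)$ (interpreted with $h$ rescaled or re-evaluated along the flow as dictated by the smooth picture of Theorem~\ref{intro 1}), becomes a linear stochastic integral equation in the path variable $s$. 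The key point is that the vector field $p_{\tau_h}$ depends on $\omega^\mathcal{H}$ only through Stratonovich integrals with uniformly bounded integrands (torsion/curvature of a metric connection on a compact manifold, evaluated along the lift $u$), so Picard iteration on $W_0(\mathbb{R}^n)$ converges in, say, $L^2(\mu_\mathcal{H})$ uniformly on compact $t$-intervals, producing $\nu_t^h$ as a genuine semimartingale; the group property $\nu_{t+s}^h = \nu_t^h \circ \nu_s^h$ follows from uniqueness of solutions and the fact that the whole construction is adapted, hence respects the equivalence classes modulo $\mu_\mathcal{H}$-null sets. This is Theorem~\ref{flow euclidean}.

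Next, for quasi-invariance, I would transport the problem to $W_{x_0}(\M)$ via $\zeta_t^h = \phi_\mathcal{H}\circ \nu_t^h \circ \phi_\mathcal{H}^{-1}$ and show $\tfrac{d}{dt}(\zeta_t^h)_*\mu_W \ll \mu_W$ with an explicit Radon--Nikodym cocycle. The standard mechanism is: (i) show that for a fixed smooth cylinder (or exponential) functional $F$ on path space, $t\mapsto \E{F(\zeta_t^h\,\omega)}$ is differentiable and its derivative equals $\E{F(\zeta_t^h\,\omega)\,\ell_t}$ for an explicit $\ell_t\in L^2$; (ii) integrate the resulting linear ODE for the densities, using a Gronwall-type bound on $\E{\ell_t^2}$ to get that the density $Z_t := \tfrac{d(\zeta_t^h)_*\mu_W}{d\mu_W}$ exists, is positive, lies in $L^1$, and satisfies the multiplicative cocycle identity $Z_{t+s} = Z_s\cdot (Z_t\circ\zeta_s^h)$. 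The input (i) is exactly an infinitesimal integration-by-parts statement: moving $\omega^\mathcal{H}$ in the direction $\tau_h(\omega^\mathcal{H})$ — which by Theorem~\ref{intro 1} and Lemma~\ref{construction tangent} is the tangent process making $\tfrac{d}{dt}\phi_\mathcal{H}^{-1}(\zeta_t^h\gamma)$ land in $\mathbb{R}^n$ — produces, under $\mu_\mathcal{H}$, a shift whose leading term is a Cameron--Martin direction $h$ plus a bounded anticipating-free correction, so Girsanov/Cameron--Martin on $W_0(\mathbb{R}^n)$ applies to the leading term and the correction is absorbed because it is adapted with bounded coefficients. Concretely, one differentiates $\E{F(\phi_\mathcal{H}(\nu_t^h\omega^\mathcal{H}))}$, uses that $\nu_t^h$ solves the flow equation, and recognizes the $t$-derivative of the shift as $\tau_h$ acting on the current path; the Stratonovich corrections coming from the torsion term convert to an Itô drift that is $\mu_\mathcal{H}$-integrable by compactness of $\M$.

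The main obstacle I expect is justifying the passage from the smooth picture of Theorem~\ref{intro 1} to the semimartingale level rigorously while keeping everything well-defined modulo $\mu_\mathcal{H}$-null sets: one must verify that $\phi_\mathcal{H}$, $\phi_\mathcal{H}^{-1}$, $\tau_h$, $p_{\tau_h}$, and the flow $\nu_t^h$ all descend to maps on equivalence classes, that the compositions $\phi_\mathcal{H}\circ\nu_t^h\circ\phi_\mathcal{H}^{-1}$ are $\mu_W$-a.s.\ well-defined, and that differentiation in $t$ commutes with expectation. The delicate analytic estimate is the uniform-in-$t$ $L^p$ control of $\nu_t^h$ and of the logarithmic derivative $\ell_t$; this requires moment bounds for iterated Stratonovich integrals against the horizontal Brownian motion, which hold because the torsion and curvature of a metric connection on a compact foliated manifold are bounded and the lift $u$ lives in the (compact) orthonormal frame bundle. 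A secondary subtlety is that the vector field $p_{\tau_h}$ is only $\mu_\mathcal{H}$-a.s.\ defined and a priori anticipating through $u$; however, because $u_s$ is $\mathcal{B}_s$-measurable and $\tau_h$ is adapted, all the stochastic integrals appearing are genuine Itô/Stratonovich integrals of adapted processes, so no Skorokhod theory is needed — this adaptedness is what ultimately makes the Cameron--Martin argument go through. Once these technical points are settled, assembling (i) and (ii) yields the cocycle $Z_t$ and hence quasi-invariance of $\mu_W$ under $\{\zeta_t^h\}_{t\in\mathbb{R}}$, which is Theorem~\ref{main}.
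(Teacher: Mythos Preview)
Your Picard-iteration construction of the flow $\{\nu_t^h\}$ on $W_0(\mathbb{R}^n)$ is exactly the route the paper takes (following Hsu, whose proof the paper cites and does not reproduce), so that part is fine.

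For quasi-invariance, however, the paper's argument is structurally different and more direct than your differentiate-then-integrate (Cruzeiro-style) scheme. The paper first rewrites $p_v(\omega^\mathcal{H})$ in It\^o form as
\[
p_v(\omega^\mathcal{H})_s=\int_0^s q_v(\omega^\mathcal{H})_r\, d\omega^\mathcal{H}_r+\int_0^s r_v(\omega^\mathcal{H})_r\, dr,
\]
and the crucial observation---which you do not make---is that $q_v$ takes values in $\mathfrak{so}(n)$. This is what forces the flow $\nu_t^h$ to remain in the class $S\M_\mathcal{H}(h)$ of semimartingales whose martingale part is an \emph{orthogonal} rotation of $\omega^\mathcal{H}$ plus a Cameron--Martin drift with the a priori bound \eqref{estimate SMH}. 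Girsanov's theorem (in the form of \cite[Lemma~8.2]{Driver1992b}) then gives equivalence of $(\nu_t^h)_*\mu_\mathcal{H}$ and $\mu_\mathcal{H}$ for each fixed $t$ in one stroke; transporting via $\phi_\mathcal{H}$ yields the statement on $W_{x_0}(\M)$.

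Your step (i)--(ii) plan could in principle be pushed through, but your description of the ``correction'' to the Cameron--Martin shift $h$ as something ``absorbed because it is adapted with bounded coefficients'' is where the gap lies. That correction is not merely a drift: it contains the genuine martingale term $\int q_v\, d\omega^\mathcal{H}$, which alters the quadratic variation unless $q_v$ is skew-symmetric. Without isolating the $\mathfrak{so}(n)$-structure you cannot justify that Girsanov/Cameron--Martin ``applies to the leading term,'' nor can you produce the logarithmic derivative $\ell_t$ in your step (i). Once you do isolate it, the Cruzeiro route and the paper's direct-Girsanov route become essentially equivalent, but the paper's is shorter: there is no need to differentiate in $t$ and integrate back, because membership in $S\M_\mathcal{H}(h)$ already encodes quasi-invariance.
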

It should be noted that  our argument follows  relatively closely the one by B.~Driver in \cite{Driver1992b} and later by E.~Hsu in \cite{Hsu1995b} (see also \cite{Cruzeiro1983b, CruzeiroMalliavin1996, EnchevStroock1995a}) and therefore going from Theorem \ref{intro 1} to Theorem \ref{intro 2} is quite routine. In Section \ref{Example QI} we illustrate our results in the case of Riemannian submersions and explicitly compute the flow $\zeta^h_t$ associated to the Bott connection in some examples.

The goal of the second part of the paper  is to establish several types of integration by parts formulas for the horizontal Brownian motion.
In Section \ref{W:6}, we survey known geometric and stochastic results and introduce the notation and conventions used throughout Section \ref{p.2}. Most of this material is based on \cite{BaudoinKimWang2016} for the geometric part and  \cite{Baudoin2017b} for the stochastic part. The most relevant result that will be used later is the Weitzenb\"ock formula given in Theorem \ref{Weitzenbock}. It  asserts that for every $f \in C^\infty(\M)$, $x \in \M$ and every $\varepsilon >0$
\begin{equation}\label{W1}
dLf (x)=\square_\varepsilon df (x),
\end{equation}
where $\square_\varepsilon$ is a one-parameter family of sub-Laplacians on one-forms indexed by a parameter $\varepsilon >0$.
These sub-Laplacians on one-forms are constructed from a family of metric connections $\nabla^\varepsilon$ introduced in \cite{Baudoin2017b} whose adjoint connections $\widehat{\nabla}^\varepsilon$ in the sense of B.~Driver in \cite{Driver1992b} are also metric. These connections satisfy Assumption \ref{property D}, so that the results of Section \ref{p.1} are applicable. Even though Section \ref{W:6} introduces mostly preliminaries, we present a number of new results there such as Lemma \ref{ricci adjoint}.

In Section \ref{s.IbyP}, we prove integration by parts formulas for the horizontal Wiener measure with the main result being Theorem \ref{IBP2} which includes the following result. Suppose $F$ is a cylinder function,  $v$ is a tangent  process on $T_{x} \M$ as defined in Definition \ref{admissible}, then we have
\begin{align}\label{IBP intro}
  \mathbb{E}_x\left( \mathbf{D}_v F \right)
 =\mathbb{E}_x \left( F \int_0^1 \left\langle v_\mathcal{H}^{\prime}(s)+ \frac{1}{2} \para_{0,s}^{-1} \mathfrak{Ric}_{\mathcal{H}}  \para_{0,s}, dB_s\right\rangle_{\mathcal{H}}  \right),
\end{align}
where $x$ is the starting point of the horizontal Brownian motion, $\mathbf{D}_v F$ is the directional derivative of $F$ in the direction of $v$, $\para_{0,s}$ is the stochastic parallel transport for the Bott connection, and  $\mathfrak{Ric}_{\mathcal{H}}$ is the horizontal Ricci curvature of the Bott connection. The Bott connection as defined in Section \ref{Bott connection} corresponds to the adjoint connection $\widehat{\nabla}^\varepsilon$ as $\varepsilon \to \infty$. In the integration by parts formula \eqref{IBP intro}, the tangent  process $v$ is a $T_{x} \M$--valued process such that its horizontal part $v_\mathcal{H}$ is absolutely continuous and satisfies $\mathbb{E} \left( \int_0^1 \Vert v^{\prime}_\mathcal{H} (s) \Vert_{T_{x} \M}^2 ds \right)< \infty$ and its vertical part is given by
\begin{align}\label{vertg}
v_\mathcal{V} (s)=\int_0^s \para_{0, r}^{-1}  T (\para_{0, r}  \circ dB_r, \para_{0,r}  v_\mathcal{H} (r)),
\end{align}
where $T$ is the torsion tensor of the Bott connection. Observe that  \eqref{IBP intro} looks similar to the integration by parts formulas by J.-M.~Bismut and B.~Driver. This is not too surprising if one thinks about the special case when the foliation comes from a Riemannian submersion with totally geodesic fibers. We consider this case in Section \ref{ss.RiemSubmersions}, and we prove that then that the integration by parts formula in Theorem ~\eqref{IBP2} is actually a horizontal lift of Driver's formula from the base space of the fibers to $\M$. However, in general foliations do not come from submersions (see for instance  \cite{Escobales1982a} for necessary and sufficient conditions) and one therefore needs to develop an intrinsic horizontal stochastic calculus on $\M$ to prove \eqref{IBP intro}. Developing such a calculus is one of the main accomplishments of the current paper.

The proof of Theorem \ref{IBP2} proceeds in several steps. As in \cite{Baudoin2017b}, the Weitzenb\"ock formula \eqref{W1} yields a stochastic representation for the derivative of the semigroup of the horizontal Brownian motion in terms of a damped stochastic parallel transport associated to the connection $\nabla^\varepsilon$ (see Lemma \ref{l.5.4}). By using techniques of \cite{BaudoinFeng2016},  Lemma \ref{l.5.4} implies an integration by parts formula for the damped Malliavin derivative as stated in Theorem \ref{IBP}. The final step is to prove  Theorem \ref{IBP2} from Theorem \ref{IBP}. The main difficulty is that the connection $\nabla^\varepsilon$ is in general not horizontal. However, it turns out that the adjoint connection $\widehat{\nabla}^\varepsilon$  is not only metric but also horizontal. As a consequence, one can use the orthogonal invariance of the horizontal Brownian motion (Lemma \ref{IPP2}) to filter out the redundant noise which is given by the torsion tensor of $\nabla^\varepsilon$ . It is remarkable that the integration by parts formula   for the directional derivatives in Theorem  \eqref{IBP2} is actually independent of the choice of a particular connection and therefore is independent of $\varepsilon$ in the one-parameter family of connections used to define the damped Malliavin derivative. While integration by parts formulas for the damped Malliavin derivative may be used to prove gradient bounds for the heat semigroup (as in \cite{Baudoin2017b}) and log-Sobolev inequalities on the path space (as in \cite{BaudoinFeng2016}), we prove that the integration by parts formula \eqref{IBP intro} comes from the quasi-invariance property of the horizontal Wiener measure proved in Section \ref{s.QI}.

\begin{remark}
In the current paper, we restrict consideration to the case of compact manifolds mainly for the sake of conciseness. It is reasonable to conjecture that as in \cite{HsuOuyang2009}, our results may be extended to complete manifolds.
\end{remark}

\begin{acknowledgement}
The authors thank Bruce Driver for stimulating discussions and an anonymous referee for insightful remarks that helped to improve the presentation of the paper significantly and to clarify  key definitions.
\end{acknowledgement}

\section{Geometric preliminaries: Riemannian foliations}\label{s.Notation}


\subsection{Riemannian foliations}

We start by recalling the notion of a foliation. Let $\mathbb{M}$ be a smooth connected  manifold of dimension $n+m$. Then a foliation of dimension $m$ on $\mathbb{M}$ is usually described as a collection $\mathcal{F}$ of disjoint connected non-empty immersed $m$-dimensional submanifolds of $\mathbb{M}$ (called the \emph{leaves} of the foliation), whose union is $\mathbb{M}$, and such that in a neighborhood of each point in $\mathbb{M}$ there exists a chart for $\mathcal{F}$ as follows.

Before we define such Riemannian foliations, let us introduce some standard notation.

\begin{notation}\label{n.geometry}
Suppose $\left( \M, g\right)$ is a Riemannian manifold. By $T\M$ we denote the \emph{tangent bundle} and by $T^{\ast}\M$ the \emph{cotangent bundle}, and by  $T_{x}\M$ ($T_{x}^{\ast}\M$) the \emph{tangent (cotangent) space} at $x \in \M$. The inner product on $T\M$ induced by the metric $g$ will be denoted by $g\left( \cdot, \cdot \right)$. If $\mathcal{U}$ is a subbundle of the tangent bundle $T\M$,  the restriction of $g$ to $\mathcal{U}$ will be denoted by $g_{\mathcal{U}}\left( \cdot, \cdot \right)$.

As always, for any $x \in \M$ we denote by $g\left( \cdot, \cdot \right)_{x}$ (or $\langle \cdot, \cdot \rangle_x$), $g_{\mathcal{U}}\left( \cdot, \cdot \right)_{x}$ (or $\langle \cdot, \cdot \rangle_{\mathcal{U}_x}$) (or $\langle \cdot, \cdot \rangle_{\mathcal{U}_x}$) the inner product on the fibers $T_{x}\M$ and $\mathcal{U}_{x}$  correspondingly.  The space of \emph{smooth functions} on $\M$ will be denoted by $C^\infty(\M)$. The space of \emph{smooth sections} of a vector bundle $\mathcal{E}$ over $\M$ will be denoted $\Gamma^\infty(\mathcal{E})$.

\end{notation}

\begin{definition}\label{d.Foliation}
Let $\M$ be a smooth connected $n+m$-dimensional manifold. An $m$-dimensional foliation $\mathcal{F}$ on $\M$ is defined by a (maximal) collection of pairs $\{ (U_\alpha, \pi_\alpha), \alpha \in I \}$ of open subsets $U_\alpha$ of $\M$ and submersions $\pi_\alpha: U_\alpha \to U_\alpha^0$ onto open subsets of $\mathbb{R}^n$ satisfying
\begin{itemize}
\item $\bigcup_{\alpha \in I} U_\alpha =\M$;
\item If $U_\alpha \cap U_\beta \neq \emptyset$, there exists a local diffeomorphism $\Psi_{\alpha \beta}$ of $\mathbb{R}^n$ such that $\pi_\alpha=\Psi_{\alpha \beta} \pi_\beta$ on $U_\alpha \cap U_\beta $.
\end{itemize}
\end{definition}
In addition, we assume that the foliation  $\mathcal{F}$ on $\mathbb{M}$ is a Riemannian foliation with a bundle-like  metric $g$ and totally geodesic $m$-dimensional leaves. Informally a bundle-like  metric  is similar to a product metric locally, and the notion has been introduced in \cite{Reinhart1959a}. We refer to \cite{BaudoinEMS2014, MolinoBook1988, Reinhart1959a, TondeaurBook1988} for details about the geometry of Riemannian foliations, but for convenience of the reader we recall some basic definitions.

The maps $\pi_\alpha$ are called \emph{disintegrating maps} of $\mathcal{F}$. The connected components of the sets $\pi_\alpha^{-1}(c)$, $c \in \mathbb{R}^n$, are called the \emph{plaques} of the foliation. For each $p\in U_{\alpha}$, we define $\mathcal V_p:=Ker( (\pi_{\alpha})_{*p})$.
The subbundle $\mathcal{V}$ of $T\M$ with fibers $\mathcal V_p$ is referred to as the \emph{vertical distribution}. These are the vectors tangent to the leaves, the maximal integral sub-manifolds of $\mathcal{V}$.

\begin{definition}
Let $\M$ be a smooth connected $n+m$-dimensional Riemannian manifold. An $m$-dimensional foliation $\mathcal{F}$ on $\M$ is said to be \emph{Riemannian with a bundle-like metric} if the disintegrating maps $\pi_\alpha$ are Riemannian submersions onto $U_\alpha^0$ with its given Riemannian structure. If moreover the leaves are totally geodesic sub-manifolds of $\M$, then we say that the Riemannian foliation is \emph{totally geodesic with a bundle-like metric}.
\end{definition}

\subsection{Horizontal and vertical subbundles of $T\M$ and forms}\label{ss.HorizVertSubbundles}

The subbundle $\mathcal{H}$ which is normal to the vertical subbundle $\mathcal{V}$ is referred to as the set of \emph{horizontal directions}. Though this assumption is not strictly necessary in many parts of the paper, to simplify the presentation we always assume that  $\mathcal{H}$ is bracket-generating, that is, the Lie algebra of vector fields generated by global $C^{\infty}$--sections of $\mathcal{H}$ has the full rank at each point in $\M$. Using Notation \ref{n.geometry}, we denote the restrictions of the metric $g$ to $\mathcal{H}$ and $\mathcal{V}$ by $g_{\mathcal{H}}\left( \cdot, \cdot \right)$ and $g_{\mathcal{V}}\left( \cdot, \cdot \right)$ respectively.

We say that a one-form is \emph{horizontal (resp. vertical)} if it vanishes on the vertical bundle $\mathcal{V}$ (resp. on the horizontal bundle $\mathcal{H}$). Then the splitting of the tangent space
 \[
 T_x \bM= \mathcal{H}_x \oplus \mathcal{V}_x
 \]
induces a splitting of the cotangent space
 \[
 T^{\ast}_x \bM= \mathcal{H}^{\ast}_x \oplus \mathcal{V}^{\ast}_x.
 \]

The subbundle $\mathcal{H}^{\ast}$ of the cotangent bundle will be referred to as the cohorizontal bundle. Similarly, $\mathcal{V}^{\ast}$  will be referred to as the covertical bundle.

\subsection{Examples}

\begin{example}[Riemannian submersions, Hopf fibrations]\label{ex.RiemSubmersion}

Let $(\M, g)$  and  $(\B,j)$ be two smooth and connected Riemannian manifolds. A smooth surjective map $\pi: \M \to \B$ is called a \emph{Riemannian submersion} if for every $x \in \M$ the differential $T_x\pi: T_x \M \to T_{\pi(x)} \B$ is an orthogonal projection, i.e.  the map $ T_{x} \pi (T_{x} \pi)^*: T_{\pi(x)}  \B \to T_{\pi(x)} \B$
is the identity map. The foliation given by the fibers of a Riemannian submersion is then bundle-like (see  \cite[Section 2.3]{BaudoinEMS2014}). We refer to \cite[Chapter 9, Section F, pp. 249-252]{Besse1987} for Riemannian submersions with totally geodesic fibers.

The generalized Hopf fibrations (e.g. \cite[Chapter 9, Section H]{Besse1987}, \cite[Section 1.4.6]{PetersenBook3rdEdition}) offer a wide range of examples of Riemannian submersions whose fibers are totally geodesic. Let $G$ be a Lie group, and $H, K$ be two compact subgroups of $G$ with $K \subset H$. Then, we have a natural fibration given by the coset map

\begin{align*}
\pi: &  G / K   \longrightarrow   G / H
\\
& \alpha K  \longmapsto  \alpha H,
\end{align*}
where the fiber is $H/K$. From \cite{Berard-Bergery1975}, there exist $G$-invariant metrics on respectively  $G / K $ and  $ G / H $ that make $\pi$ a Riemannian submersion with totally geodesic fibers isometric to $H / K$. For instance with $G=SU(n+1)$, $H=S(U(1)  U(n)) \simeq U(n)$ and $K=SU(n)$, one obtains the usual Hopf fibration $\pi: S^{2n+1} \to \mathbb{C}P^n$, see  \cite[Chapter 9, Section H, Example 9.81]{Besse1987}. For $n=1$, this reduces to the Hopf fibration $\pi: SU(2) \simeq S^{3}  \to \mathbb{C}P^1 \simeq S^2$.

\end{example}

\begin{example}[$K$-contact manifolds]\label{ex.Kcontact}
Another important example of a Riemannian foliation is obtained in the context of contact manifolds. Let $\left( \M, \theta \right)$ be a $2n+1$-dimensional smooth contact manifold, where $\theta$ is a contact form. Then there is a unique smooth vector field $Z$ on $\M$, called the \emph{Reeb vector field}, satisfying
\[
\theta(Z)=1,\quad \mathcal{L}_Z(\theta)=0,
\]
where $\mathcal{L}_Z$ denotes the Lie derivative with respect to  $Z$. The Reeb vector field induces a foliation on $\M$, the Reeb foliation, whose leaves are the orbits of the vector field $Z$.  It is known (see \cite{Sasaki1960, Tanno1989}), that it is always possible to find a Riemannian metric $g$ and a $(1,1)$-tensor field $J$ on $\M$ so that for every  vector fields $X, Y$
\[
g(X,Z)=\theta(X),\quad J^2(X)=-X+\theta (X) Z, \quad g(X,JY)=(d\theta)(X,Y).
\]
The triple $(\M, \theta,g)$ is called a \emph{contact Riemannian manifold}.
We see then that the Reeb foliation is totally geodesic with a bundle-like metric if and only if the Reeb vector field $Z$ is a Killing field, that is,
\[
\mathcal{L}_Z g=0,
\]
as is stated in \cite[Proposition 6.4.8]{BoyerGalickiBook}. In this case, $(\M, \theta, g)$ is called a \emph{K-contact Riemannian manifold}. Observe that the horizontal distribution $\mathcal{H}$ is then the kernel of $\theta$ and that $\mathcal{H}$ is bracket generating
because $\theta$ is a contact form. We refer to \cite{BaudoinWang2014a, Tanno1989} for further details on this class of examples.
\end{example}
%
%
%
%

\subsection{Bott connection}\label{Bott connection}

If we view $(\M,g)$ as a Riemannian manifold, the Levi-Civita connection $\nabla^R$ is a natural choice for stochastic analysis on $\M$. But this connection is not adapted to the study of foliations because the horizontal and  vertical bundles may not be parallel with respect to $\nabla^R$. We will rather make use of the \emph{Bott connection} on $\mathbb{M}$ which is defined as follows.

\[
\nabla_X Y =
\begin{cases}
\pi_{\mathcal{H}} ( \nabla_X^R Y), X, Y \in \Gamma^\infty(\mathcal{H}),
\\
\pi_{\mathcal{H}} ( [X,Y]),  X \in \Gamma^\infty(\mathcal{V}), Y \in \Gamma^\infty(\mathcal{H}),
\\
\pi_{\mathcal{V}} ( [X,Y]),  X \in \Gamma^\infty(\mathcal{H}), Y \in \Gamma^\infty(\mathcal{V}),
\\
\pi_{\mathcal{V}} ( \nabla_X^R Y), X, Y \in \Gamma^\infty(\mathcal{V}),
\end{cases}
\]
where $\pi_\mathcal{H}$ (resp. $\pi_\mathcal{V}$) is the projection on $\mathcal{H}$ (resp. $\mathcal{V}$). One can check that since the foliation is bundle-like and totally geodesic the Bott connection is metric-compatible, that is, $\nabla g=0$, though unlike the Levi-Civita connection it is not torsion-free. The following properties of the Bott connection are standard but require tedious computations. We refer to \cite[Chapter 5]{TondeaurBook1988} for some of these, and  to \cite{MolinoBlog} for the details of the statements below and also point out that the Bott connection is a special case of a general class of connections introduced by R. Hladky in \cite[Lemma 2.13]{Hladky2012}.


Let  $T$ be the torsion of the Bott connection $\nabla$. Observe that for $X, Y \in  \Gamma^\infty (\mathcal{H})$

\begin{align*}
T(X,Y)&=\nabla_X Y-\nabla_Y X -[X,Y] \\
 &  =\pi_{\mathcal{H}} ( \nabla^R_X Y-\nabla^R_Y X) -[X,Y] \\
 &=\pi_{\mathcal{H}} ([X,Y]) -[X,Y] \\
 &=-\pi_{\mathcal{V}} ( [X,Y]).
\end{align*}

Similarly one can check that the Bott connection satisfies the following properties that we record here for later use
\begin{align}\label{e.1.2}
& \nabla_{X} Y \in \Gamma^\infty(\mathcal{H}) \text{ for any } X, Y \in  \Gamma^\infty(\mathcal{H}), \notag
\\
& \nabla_{X} Y \in \Gamma^\infty(\mathcal{V}) \text{ for any } X, Y \in  \Gamma^\infty(\mathcal{V}), \notag
\\
& T\left( X, Y \right) \in \Gamma^\infty(\mathcal{V}) \text{ for any } X, Y \in  \Gamma^\infty(\mathcal{H}),
\\
& T\left( U, V \right) =0 \text{ for any } U, V \in  \Gamma^\infty(\mathcal{V}), \notag
\\
& T\left( X, U \right)=0  \text{ for any } X \in \Gamma^\infty(\mathcal{H}),  U \in \Gamma^\infty(\mathcal{V}).\notag
\end{align}


\begin{example}[Example \ref{ex.RiemSubmersion} revisited]\label{connection submersion}
Let $\pi: (\M, g)\to (\B,j)$ be a Riemannian submersion with totally geodesic leaves. A vector field $X \in \Gamma^\infty(T\M)$ is said to be \emph{projectable} if there exists a smooth vector field $\overline{X}$ on $\B$ such that for every $x \in \M$,  $T_x \pi ( X(x))= \overline {X} (\pi (x))$. In that case, we say that $X$ and $\overline{X}$ are $\pi$-related. A vector field $X$ on $\M$ is called \emph{basic} if it is projectable and horizontal. If  $\overline{X}$ is a smooth vector field on $\B$, then there exists a unique basic vector field $X$ on $\M$ which is $\pi$-related to $\overline{X}$. This vector is called the \emph{lift} of $\overline{X}$.
The Bott connection is then a lift of the Levi-Civita connection of $(\B,j)$ in the following sense:
\begin{align}\label{projectsdown}
\nabla^{\B}_{\overline{X}} \overline{Y} =\overline{ \nabla_X Y}, \quad \overline{X}, \overline{Y} \in \Gamma^\infty(\mathbb{B}),
\end{align}
where $\nabla^\B$ is the Levi-Civita connection on $\B$.
\end{example}

\begin{example}[Example \ref{ex.Kcontact} revisited]
Let $(\M, \theta,g)$ be a K-contact Riemannian manifold. The Bott connection coincides  with Tanno's connection that was introduced in \cite{Tanno1989} and which is the unique connection that satisfies the following properties.
\begin{enumerate}
\item $\nabla\theta=0$;
\item $\nabla Z=0$;
\item $\nabla g=0$;
\item ${T}(X,Y)=d\theta(X,Y)Z$ for any $X,Y\in \Gamma^\infty(\mathcal{H})$;
\item ${T}(Z,X)=0$ for any vector field $X\in \Gamma^\infty(\mathcal{H})$.
\end{enumerate}
\end{example}

\subsection{Orthonormal frame bundle}\label{s.OFB} We will use standard notation for orthonormal frame bundles. Suppose $\M$ is a compact Riemannian manifold of dimension $d$. Note that in the setting of Riemannian foliations we have $d=n+m$. Recall that a frame at $x \in \M$ can be described as a linear isomorphism $u: \mathbb{R}^{d} \to T_x\M$ such that for the standard basis $\left\{  e_{i} \right\}_{i=1}^{d}$ of $\mathbb{R}^{d}$ the collection $\left\{ u\left( e_{i} \right)\right\}_{i=1}^{d}$ is a basis (frame) for $T_{x}\M$. The collection of all such frames $\mathcal{F}\left( \M \right):=\bigcup_{x \in \M} \mathcal{F}\left( \M \right)_{x}$ is called the \emph{frame bundle} with the group $\operatorname{GL}\left( \mathbb{R}, d \right)$ acting on the bundle. If $\M$ is in addition Riemannian, we can restrict ourselves to consideration of Euclidean isometries  $u: \left( \mathbb{R}^{d}, \langle \cdot,\cdot \rangle \right) \to \left( T_x\M, g \right)$ with the group $\operatorname{O}\left( \mathbb{R}, d \right)$ acting on the bundle. The \emph{orthonormal frame bundle} will be denoted by $\mathcal{O}(\M)$.

Suppose that $D$ is a connection on $\M$, then $D$ induces a decomposition of each tangent space $T_{u}\mathcal{O}(\M)$ into the direct sum of a horizontal subspace and a vertical subspace as described in \cite[Section 2.1]{HsuEltonBook}. Using such decomposition, one can then lift smooth maps on $\M$ into smooth horizontal paths on $\mathcal{O}(\M)$, see \cite[p. 421]{Hsu1995b}. Such a lift is usually called the horizontal lift to $\mathcal{O}(\M)$. However, to avoid the confusion with the notion of horizontality given by the foliation on $\M$, in this paper it shall often simply be referred to as the $D$-lift to $\mathcal{O}(\M)$.


\section{Horizontal calculus of variations}\label{s.HorizontCalculus}


To motivate the definition of the tangent processes to the horizontal Brownian motion on $\M$ that we will use to prove quasi-invariance, we first present results on the horizontal calculus of variations of deterministic paths.

\subsection{Adapted connections}

Using the notation in Section \ref{s.OFB}, we consider $u \in \mathcal{O} (\M)$.
To take into account the foliation structure on $\M$, we shall be interested in a special subbundle of $\mathcal{O} (\M)$, the \textit{horizontal frame bundle}.

\begin{definition}
An isometry $u: (\mathbb{R}^{n+m}, \langle \cdot,\cdot \rangle ) \longrightarrow (T_x\M, g)$ will be called \emph{horizontal} if $u ( \mathbb{R}^n \times \{ 0 \}) \subset \mathcal{H}_x$ and $u (  \{ 0 \}  \times \mathbb{R}^m ) \subset \mathcal{V}_x$. The \emph{horizontal frame bundle} $\mathcal{O}_\mathcal{H} (\M)$ is then defined as the set of $(x,u) \in \mathcal{O} (\M)$ such that $u$ is horizontal.
\end{definition}

For notational convenience, when needed we identify $\mathbb{R}^{n+m}$ with  $\mathbb{R}^n\times \mathbb{R}^m$, hence we have embeddings of $\mathbb{R}^n$ and $\mathbb{R}^m$ into  $\mathbb{R}^{n+m}$.


\begin{assumption}\label{property D}\leavevmode
We assume that $D$ is a connection on $\M$ satisfying the following properties.

\begin{itemize}
\item $D$ is a \emph{metric connection} on $\M$, that is, $D g =0$;

\item  $D$ is \emph{adapted to the foliation} $\mathcal{F}$ in the following sense
\begin{align*}
& D_{X} Y \in \Gamma^\infty(\mathcal{H}), \text{ if } X \in \Gamma^\infty(\M), Y \in \Gamma^\infty(\mathcal{H}),
\\
& D_{X} Z \in \Gamma^\infty(\mathcal{V}), \text{ if }  X \in \Gamma^\infty(\M), Z \in \Gamma^\infty(\mathcal{V});
\end{align*}

\item For every $X \in \Gamma^\infty(\mathcal{H}), Y \in \Gamma^\infty(\M) $, $D_X Y =\nabla_X Y$, where $\nabla$ is the Bott connection.
\end{itemize}
\end{assumption}

\begin{remark}\label{projection connection}
In the case of a Riemannian submersion in Example \ref{ex.RiemSubmersion}, these assumptions imply that  the connection $D$ is a lift of the Levi-Civita connection on $(\B, j)$, namely,
\begin{align*}
\nabla^{\B}_{\overline{X}} \overline{Y} =\overline{ D_X Y}, \quad \overline{X}, \overline{Y} \in \Gamma^\infty(\B),
\end{align*}
where $\nabla^\B$ is the Levi-Civita connection on $\B$. We refer to Example \ref{connection submersion} for further details.
\end{remark}
Of course, an example of a connection $D$ that satisfies the above assumptions is given by the Bott connection $\nabla$ itself. However, we state the results of the section in  greater generality using a connection $D$ satisfying Assumption \ref{property D}. This generality  is relevant for Section \ref{p.2}, where we use other connections than the Bott connection (see Remark \eqref{compatibility F}). The main reason for using different connections is that while the Bott connection is adapted to the foliation structure, the torsion of the Bott connection is not skew-symmetric. 

The connection $D$ allows us to lift vector fields on $\M$ to vector fields on $\mathcal{O} (\M)$ (see \cite[p.421]{Hsu1995b}). Let $e_1, \cdots, e_n, f_1, \cdots, f_m$ be the standard basis of $\mathbb{R}^{n+m}$.

\begin{notation}\label{HVframe}
We denote by $A_i$ the vector field on $\mathcal{O} (\M)$ such that   $A_i (x,u)$ is the lift of $u(e_i)$, $i=1, ..., n$, $\left( x, u \right) \in \mathcal{O} (\M)$, and we denote by $V_{j}$ the vector field on $\mathcal{O} (\M)$ such that $V_{j} (x,u)$ is the lift of $u(f_j)$, $j=1, .., m$. We sometimes call $A$ and $V$ \emph{fundamental vector fields} on $\mathcal{O} (\M)$. For any $v \in \mathbb{R}^{n+m}$, we denote
\begin{align*}
& Av:=\sum_{i=1}^n v_i A_i,
\\
& Vv :=\sum_{j=1}^m v_{j+n} V_j.
\end{align*}
Then $Av$ and $Vv$ are  vector fields on $\mathcal{O}(\M)$ whose values at some $u \in \mathcal{O}(\M)$ will be denoted  respectively by $A_u v$ and $V_uv$.
\end{notation}

\begin{notation} Let $x_{0}$ be a fixed point in $\M$.
By $W_0^\infty(\mathbb{R}^{n+m})$ we denote the space of smooth paths $v : [0,1] \longrightarrow \mathbb{R}^{n+m}$ such that $v(0)=0$, and by $W_{x_0}^\infty(\mathbb{M})$ we denote the space of smooth paths $\gamma : [0,1] \longrightarrow  \mathbb{M}$ such that $\gamma(0)=x_0$.
\end{notation}

\subsection{Development maps}

Next we would like to recall the notion of a rolling map $\phi$ between path spaces over $\M$ and $\mathbb{R}^{n+m}$ or equivalently development and anti-development maps (see for instance \cite[Section 2]{Hsu1995b}). Let $\pi: \mathcal{O}(\M) \rightarrow \M$ be the bundle  projection map. To define the rolling map $\phi: W_0^\infty(\mathbb{R}^{n+m}) \rightarrow  W_{x_0}^\infty(\mathbb{M})$ we need the following  differential equation on $\mathcal{O}(\M)$

\begin{equation}\label{e.RM}
du_{s} =\sum_{i=1}^n A_i (u_{s})  d\omega^{i}_{s} +\sum_{i=1}^m V_i(u_{s})d\omega^{n+i}_{s}=A_{u_{s}} d\omega_{s} +V_{u_{s}} d\omega_{s},
\end{equation}
where $\omega \in W_0^\infty(\mathbb{R}^{n+m})$. By compactness of $\M$ and thus of $\mathcal{O}(\M)$ this equation has a unique solution given an initial condition $u_0 \in \mathcal{O}(\M)$. In the sequel we fix  $u_0 \in \mathcal{O}(\M)$ such that $\pi (u_0)=x_0$.

\begin{definition}{ \ }

\label{develop}

\begin{enumerate}
\item For any $\omega \in W_0^\infty(\mathbb{R}^{n+m})$  the  \emph{development} of $\omega$ in $\M$ is defined as $\gamma_{s}=\pi(u_{s})$, where $\left\{ u_{s} \right\}_{s \in [0,1]}$ is the solution to \eqref{e.RM} with initial condition $u_0$. Then we denote $\phi (\omega):=\gamma$. The map $\phi$ is also called the \emph{rolling map}.
\item For any $\gamma \in W_{x_0}^\infty(\mathbb{M})$ the  \emph{anti-development} of $\gamma$ in $\mathbb{R}^{n+m}$   is the unique path $\omega  \in W_0^\infty  (\mathbb{R}^{n+m})$ such that if  $\left\{ u_{s} \right\}_{s \in [0,1]}$ is the solution to  \eqref{e.RM}, then $\gamma_{s}=\pi(u_{s})$. Then we  denote  $\phi^{-1} (\gamma):=\omega$.
\end{enumerate}

\end{definition}
This definition extends to continuous semimartingales, in which case we speak of stochastic development and stochastic anti-development (e.g.  \cite[Section 2.3]{HsuEltonBook} and \cite[p. 433]{Hsu1995b}).

\subsection{Horizontal paths}

\begin{definition}
A smooth path $\omega: [0,1] \to \mathbb{R}^{n+m}$ is called \emph{horizontal} if it takes values in $\mathbb{R}^n$.  The space of smooth horizontal paths such that $\omega(0)=0$ will be denoted by $W^\infty_{0,\mathcal{H}} (\mathbb{R}^{n+m})$.
\end{definition}

\begin{definition}  \label{d.HorizontalPaths}
A smooth path $\gamma: [0,1] \to \mathbb{M}$ is called \emph{horizontal} if for every vertical smooth one-form $\theta$ we have $\int_\gamma \theta =0$.  The space of smooth horizontal paths such that $\gamma(0)=x_0$ will be denoted $W^\infty_{x_0,\mathcal{H}} (\M)$.
\end{definition}

\begin{remark}
The space $W^\infty_{x_0,\mathcal{H}} (\M)$ contains only smooth paths, therefore it can be equivalently described as follows.  A path $\gamma$ is in $W^\infty_{x_0,\mathcal{H}} (\M)$ if and only if $\gamma^{\prime}(s) \in \mathcal{H}_{\gamma(s)}$ for every $s \in [0,1]$. The advantage of Definition \ref{d.HorizontalPaths} is that it will easily extend to non-smooth paths such as  semimartingales.
\end{remark}
The next step is to define the horizontal rolling map $\phi_\mathcal{H}: W^\infty_{0,\mathcal{H}} (\mathbb{R}^{n+m}) \rightarrow W^\infty_{x_0,\mathcal{H}} (\M)$ similarly to Definition \ref{develop} on the spaces of horizontal paths. For any $\omega^\mathcal{H} \in W^\infty_{0,\mathcal{H}} (\mathbb{R}^{n+m})$ we consider the  differential equation on $\mathcal{O}(\M)$ with initial condition $u_0$

\begin{equation}\label{e.HorizontalRM}
du_{s} =\sum_{i=1}^n A_i (u_{s})  d\omega^{\mathcal{H}, i}_{s}=A_{u_{t}}  d\omega^{\mathcal{H}}_{s}.
\end{equation}
Observe that for $\gamma=\pi(u)$ we have
\[
d\gamma_{s} =\sum_{i=1}^n d\pi (A_i (u_{s}) ) d\omega^{\mathcal{H}, i}_{s},
\]
and therefore $\gamma$ is horizontal since $d\pi(A_i(u_{s}))$ is.

\begin{lem} \label{horizontal development map}
Suppose $\gamma \in W^\infty_{x_0,\mathcal{H}} (\M)$, then there exists a unique $\omega^\mathcal{H}  \in W^\infty_{0,\mathcal{H}} (\mathbb{R}^{n+m})$ such that if  $\left\{ u_{s} \right\}_{s \in [0,1]}$ is the solution to \eqref{e.HorizontalRM}, then $\gamma_{s}=\pi(u_{s})$.
\end{lem}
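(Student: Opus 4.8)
The plan is to extract the $\mathbb{R}^n$-valued path $\omega^\mathcal{H}$ directly from $\gamma$ by first $D$-lifting $\gamma$ to the orthonormal frame bundle and then reading off the anti-development. First I would note that since $\gamma$ is a smooth horizontal path in $\M$ with $\gamma(0)=x_0$, and $D$ is a metric connection, there exists a unique smooth horizontal lift $u:[0,1]\to\mathcal{O}(\M)$ of $\gamma$ with respect to the connection $D$, subject to the initial condition $\pi(u_0)=x_0$ that we have fixed; this is the standard $D$-lift recalled in Section \ref{s.OFB} and exists by compactness of $\M$ (hence of $\mathcal{O}(\M)$) and the ODE solved along $\gamma$. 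Because $D$ preserves the metric, $u_s$ is an orthonormal frame for every $s$.

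Next I would define $\omega^\mathcal{H}$ by $\omega^\mathcal{H}_s := \int_0^s u_r^{-1}\gamma'(r)\,dr$, so that $\omega^\mathcal{H}(0)=0$ and $\omega^\mathcal{H}$ is smooth. The key point is that $\omega^\mathcal{H}$ takes values in $\mathbb{R}^n\times\{0\}$: indeed $\gamma'(r)\in\mathcal{H}_{\gamma(r)}$ since $\gamma$ is horizontal (using the equivalent characterization in the Remark after Definition \ref{d.HorizontalPaths}), and since $u_r$ is a horizontal isometry in $\mathcal{O}_\mathcal{H}(\M)$ — here one uses that $D$ is adapted to the foliation, so the $D$-parallel transport of $u_0$, which I may assume lies in $\mathcal{O}_\mathcal{H}(\M)$, stays in $\mathcal{O}_\mathcal{H}(\M)$ — we get $u_r^{-1}\gamma'(r)\in\mathbb{R}^n\times\{0\}$. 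Hence $\omega^\mathcal{H}\in W^\infty_{0,\mathcal{H}}(\mathbb{R}^{n+m})$. By construction $du_s = A_{u_s}d\omega^\mathcal{H}_s$ holds, since the horizontal lift of $\gamma$ satisfies $\dot u_s = $ (horizontal lift of $\gamma'(s)$) $= A_{u_s}(u_s^{-1}\gamma'(s)) = A_{u_s}\dot\omega^\mathcal{H}_s$, where the fundamental vector fields $A_i$ are defined in Notation \ref{HVframe}; thus $u$ solves \eqref{e.HorizontalRM} and $\gamma_s=\pi(u_s)$.

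For uniqueness, suppose $\tilde\omega^\mathcal{H}\in W^\infty_{0,\mathcal{H}}(\mathbb{R}^{n+m})$ also has the property that the solution $\tilde u$ to \eqref{e.HorizontalRM} projects to $\gamma$. Then $\tilde u$ is a horizontal lift (in the $D$-sense) of $\gamma$ with $\pi(\tilde u_0)=x_0$ and the same initial frame $u_0$, so by uniqueness of the $D$-lift $\tilde u = u$; comparing \eqref{e.HorizontalRM} for $u$ and $\tilde u$ and applying $u_s^{-1}$ (using that $A_{u_s}$ is injective on $\mathbb{R}^n$ since $u_s$ is an isomorphism and $d\pi\circ A_{u_s}=u_s$ on $\mathbb{R}^n\times\{0\}$) gives $\dot{\tilde\omega}^\mathcal{H}_s = \dot\omega^\mathcal{H}_s$, hence $\tilde\omega^\mathcal{H}=\omega^\mathcal{H}$ since both vanish at $0$. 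The only mild obstacle is bookkeeping around the initial frame: one must observe that the fixed $u_0$ with $\pi(u_0)=x_0$ can be (or already is) taken in $\mathcal{O}_\mathcal{H}(\M)$ so that the adaptedness of $D$ forces $u_s\in\mathcal{O}_\mathcal{H}(\M)$ for all $s$, which is exactly what makes the anti-development land in $\mathbb{R}^n$; everything else is the standard existence/uniqueness theory for the $D$-lift transcribed to the horizontal setting, essentially as in \cite[Section 2]{Hsu1995b}.
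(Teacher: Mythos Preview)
Your proof is correct. It differs from the paper's argument in one respect worth noting: you use the pointwise characterization $\gamma'(s)\in\mathcal{H}_{\gamma(s)}$ (the Remark after Definition \ref{d.HorizontalPaths}) to see immediately that $u_s^{-1}\gamma'(s)\in\mathbb{R}^n\times\{0\}$, whereas the paper first takes the full anti-development $\omega\in W_0^\infty(\mathbb{R}^{n+m})$ of $\gamma$ via Definition \ref{develop} and then argues, from the integral Definition \ref{d.HorizontalPaths} (vanishing of $\int_\gamma\theta$ for every vertical one-form $\theta$), that the last $m$ components of $\omega$ are identically zero. Your route is shorter and perfectly adequate for smooth paths; the paper's route sticks to the line-integral definition because that is the formulation that transfers verbatim to semimartingales in Proposition \ref{Stochastic horizontal development}, where no pointwise derivative is available. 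Both arguments rely, implicitly or explicitly, on the same structural fact you flagged: since $D$ is adapted to the foliation and $u_0\in\mathcal{O}_\mathcal{H}(\M)$, the lift $u_s$ stays in $\mathcal{O}_\mathcal{H}(\M)$.
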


\begin{proof}
As before, let $e_1, \cdots, e_n, f_1, \cdots, f_m$ be the standard basis of $\mathbb{R}^{n+m}$.
Note that any $\gamma \in W^\infty_{x_0,\mathcal{H}} (\M)$  can be viewed as an element in $W_{x_0}^\infty(\mathbb{M})$. Let $\omega  \in W_0^\infty  (\mathbb{R}^{n+m})$ be the anti-development of $\gamma$ introduced in Definition \ref{develop}. Then if  $\left\{ u_{s} \right\}_{s \in [0,1]}$ is the solution to the  differential equation \eqref{e.RM} with initial condition $u_0$, then $\gamma_{s}=\pi(u_{s})$.  Since $\gamma$ is horizontal,  then for every smooth vertical one-form $\theta$ one has
\[
\int_{\gamma[0, s]} \theta=0.
\]
Therefore
\[
\int_{\gamma[0, s]} \theta=\sum_{i=1}^n \int_0^s \theta (u_r e_i ) d\omega^{i}_r+\sum_{i=1}^m \int_0^s  \theta (u_r f_i) d\omega^{n+i}_r=0.
\]
The form $\theta$ being vertical, one deduces
\[
\sum_{i=1}^m \int_0^t  \theta (u_s f_i) d\omega^{n+i}_s=0.
\]
Since it is true for any $\theta$, one deduces
\[
\sum_{i=1}^m \int_0^s  (u_r f_i) d\omega^{n+i}_r=0.
\]
Now observe that $u_r f_1, \cdots, u_sr f_m$ are linearly independent, thus for every $r$ one has $d\omega^{n+i}_r=0$. As a conclusion, $\omega$ is horizontal.

\end{proof}

\begin{definition}[Horizontal development and anti-development]\label{d.HorizSmoothItoMaps} { \ }

\begin{enumerate}
\item For any $\omega^\mathcal{H} \in W^\infty_{0,\mathcal{H}} (\mathbb{R}^{n+m})$  the  \emph{horizontal development} of $\omega$ in $\M$ is $\gamma_{s}=\pi(u_{s})$, where $\left\{ u_{s} \right\}_{s \in [0,1]}$ is the solution to \eqref{e.HorizontalRM} with initial condition $u_0 \in \mathcal{O}(\M)$. Then we denote $\phi_\mathcal{H} (\omega):=\gamma$. The map $\phi_\mathcal{H} $ is called the \emph{horizontal rolling map}.
\item For any $\gamma \in W^\infty_{x_0,\mathcal{H}} (\M)$ the  \emph{horizontal anti-development} of $\gamma$  is $\omega^\mathcal{H}  \in W^\infty_{0,\mathcal{H}} (\mathbb{R}^{n+m})$ is the unique path such that if  $\left\{ u_{s} \right\}_{s \in [0,1]}$ is the solution to  \eqref{e.HorizontalRM} with initial condition $u_0$, then $\gamma_{s}=\pi(u_{s})$. Then we  denote  $\phi_{\mathcal{H}}^{-1} (\gamma):=\omega$.  
\end{enumerate}
\end{definition}

\subsection{Paths tangent to horizontal paths}
For any $v \in W_0^\infty(\mathbb{R}^{n+m})$ we consider the vector field $\mathbf D_v$ on $W_{x_0}^\infty(\mathbb{M})$ defined by
\[
\mathbf D_v (\gamma)_s:=u_s(\gamma) v_s, \hskip0.1in \gamma \in W^\infty (\M),
\]
where $u$ is the  $D$-lift of $\gamma$ to $\mathcal{O}(\M)$.
Let $\{ \zeta_t^v, t\in\R \}$ be the flow generated by $\mathbf D_v$, i.e.
\[
\frac{d}{dt} ( \zeta_t^v \gamma)_s=\mathbf D_v ( \zeta_t^v \gamma)_s, \quad \zeta_0^v \gamma=\gamma.
\]
One can use the development and anti-development maps $\phi$ and $\phi^{-1}$ in Definition \ref{develop} to introduce a flow on $W_0^\infty(\mathbb{R}^{n+m})$ as follows
\[
\xi_t^v :=\phi^{-1} \circ  \zeta_t^v \circ \phi, \quad t \in \R.
\]
Note that $\mathbf D_v$, $\zeta_t^v$ and $\xi_t^v$  depend on the connection $D$. We now recall \cite[Theorem 2.1]{Hsu1995b} that describes the generator of the flow $\xi_t^v$ in the situation when a connection is metric-compatible but not necessarily torsion-free.

\begin{thm}[Theorem 2.1 in \cite{Hsu1995b}] \label{Hsu deterministic}
Suppose that $v \in W_0^\infty(\mathbb{R}^{n+m})$  and $\omega \in W_0^\infty(\mathbb{R}^{n+m})$. Then
\[
\left.\frac{d}{dt}\right|_{ t=0} \xi_t^v  (\omega)_s =p_v(\omega)_{s},
\]
where
\begin{align*}
 p_v(\omega)_{s} & = v(s)-\int_0^s T^D_{u_r} (  A d\omega_r+Vd\omega_r,  Av(r)+Vv(r)) -
 \\
 & \int_0^s \left(  \int_0^r \Omega^D_{u_\tau}( A   d \omega_\tau +Vd \omega_\tau, A v(\tau)+V v (\tau)) \right)   d \omega_r.
\end{align*}
Here $u$ is the  $D$-lift to $\mathcal{O}(\M)$ of the development of $\omega$,  $T^D$ is the torsion form of the connection $D$ and  $\Omega^D$ is its curvature  form.
\end{thm}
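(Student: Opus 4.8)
The plan is to follow the variational computation on the orthonormal frame bundle due to Hsu~\cite{Hsu1995b}, adapted to our notation. First I would fix $\omega\in W_0^\infty(\mathbb R^{n+m})$, put $\gamma:=\phi(\omega)$ and $\gamma^t:=\zeta_t^v\gamma$, so that $\omega^t:=\xi_t^v(\omega)=\phi^{-1}(\gamma^t)$ with $\omega^0=\omega$, and let $u_s^t\in\mathcal O(\M)$ be the $D$-lift of $\gamma^t$, i.e.\ the solution of \eqref{e.RM} driven by $\omega^t$ with the fixed initial frame $u_0$. By compactness of $\M$ the flow $\zeta_t^v$ exists for all $t$ and $(s,t)\mapsto u_s^t$ is smooth, so the two velocity fields $\partial_s u$ and $\partial_t u$ along this surface commute: $[\partial_s u,\partial_t u]=0$. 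I would then work with the tautological $\mathbb R^{n+m}$-valued $1$-form $\vartheta$ and the connection $1$-form $\theta$ of $D$ on $\mathcal O(\M)$; since $D$ is metric, $\theta$ is $\mathfrak{so}(n+m)$-valued, and the structure equations read $d\vartheta=T^D-\theta\wedge\vartheta$ and $d\theta=\Omega^D-\theta\wedge\theta$, where $T^D,\Omega^D$ are the tensorial torsion and curvature $2$-forms; these correspond, through the moving frame $u$, to the quantities $T^D_u,\Omega^D_u$ appearing in the statement.

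Next I would read off from the construction the boundary data, valid along the whole surface: $\vartheta(\partial_s u)=\dot\omega^t$ (the $s$-velocity of $\omega^t$) and $\theta(\partial_s u)=0$, the latter because the $D$-lift is horizontal for $D$; also $\vartheta(\partial_t u)=v$, since $\partial_t\gamma^t_s=\mathbf D_v(\gamma^t)_s=u^t_s v_s$; and $\theta(\partial_t u)=0$ at $s=0$, since $u_0^t\equiv u_0$. Differentiating the identities $\theta(\partial_s u)=0$ and $\vartheta(\partial_s u)=\dot\omega^t$ in the $\partial_t u$ direction, using Cartan's formula $\mathcal L_X=d\iota_X+\iota_X d$ together with $[\partial_s u,\partial_t u]=0$ and the structure equations, I expect to arrive, with $q^t_s:=\theta(\partial_t u^t_s)\in\mathfrak{so}(n+m)$, at the coupled system of linear ODEs in $s$:
\[
\partial_s q^t_s=-\,\Omega^D(\partial_t u,\partial_s u),\quad q^t_0=0,
\qquad\qquad
\partial_t\dot\omega^t_s=\partial_s v_s+T^D(\partial_t u,\partial_s u)-q^t_s\,\dot\omega^t_s.
\]
Solving the first equation for $q$, substituting into the second, integrating in $s$ from $0$ (with $v(0)=0$), and evaluating at $t=0$ — where $\dot\omega^0_r\,dr=d\omega_r$ and $\partial_t u^0$ has $\vartheta$-component $v$ and $\theta$-component $q^0$ — should produce exactly the claimed expression for $p_v(\omega)_s$, after rewriting $T^D(\partial_t u,\partial_s u)$ and $\Omega^D(\partial_t u,\partial_s u)$ in the frame $u$ and using antisymmetry of the $2$-forms to match the argument order in $T^D_u(Ad\omega+Vd\omega,\,Av+Vv)$ and $\Omega^D_u(Ad\omega+Vd\omega,\,Av+Vv)$.

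The main obstacle I anticipate is essentially bookkeeping: making all sign conventions consistent among the structure equations, the ordering of arguments in the torsion and curvature forms, and the frame identifications. A conceptually important point, however, is that the torsion term does \emph{not} vanish here — this is exactly what distinguishes a general metric connection from the Levi--Civita connection, and it is what makes the formula applicable later to the Bott connection, whose torsion is nonzero. One should also verify that, because $\theta$ is $\mathfrak{so}(n+m)$-valued, the solution $q^t_s$ of the first ODE remains antisymmetric, so that the family $u^t$ genuinely stays in $\mathcal O(\M)$ and all manipulations are legitimate; this is precisely where metric-compatibility of $D$ enters.
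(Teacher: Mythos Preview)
The paper does not supply its own proof of this statement: it is quoted verbatim as Theorem~2.1 of \cite{Hsu1995b} and used as a black box. Your outline is precisely Hsu's original argument --- the two-parameter variation on $\mathcal O(\M)$, reading off the boundary data for the solder form $\vartheta$ and the connection form $\theta$, then differentiating via Cartan's formula and the structure equations to obtain the coupled linear ODEs in $s$ for $q^t_s=\theta(\partial_t u)$ and $\partial_t\dot\omega^t_s$. So there is nothing to compare: you have reconstructed the cited proof, and your anticipated obstacles (sign bookkeeping, argument ordering in $T^D$ and $\Omega^D$, and the role of metric compatibility in keeping $q$ skew-symmetric) are exactly the ones Hsu handles.
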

We are interested in the variation of horizontal paths. Let us observe that for $\omega^\mathcal{H} \in W^\infty_{0,\mathcal{H}} (\mathbb{R}^{n+m})$
\begin{align}\label{pullback1}
 p_v(\omega^\mathcal{H})_{s} & = v(s)-\int_0^s T^D_{u_r} (  A  d\omega^\mathcal{H}_r ,  Av(r)+Vv(r))
 \\
 & -\int_0^s \left(  \int_0^r \Omega^D_{u_\tau}( A  d \omega^\mathcal{H}_\tau , A v(\tau)+V v (\tau)) \right)   d \omega^\mathcal{H}_r. \notag
\end{align}

\begin{definition}\label{d.horizontnagent}
We will say that $v \in W_0^\infty(\mathbb{R}^{n+m})$ is \emph{tangent to the horizontal path}  $\gamma \in W^\infty_{x_0,\mathcal{H}} (\M)$ if for every $s \in [0,1]$, $\left. \frac{d}{dt}\right|_{ t=0} \phi^{-1} (\zeta_t^v \gamma)_s \in \mathbb{R}^n$.
\end{definition}

\begin{remark}
From this definition, $v \in W_0^\infty(\mathbb{R}^{n+m})$ is tangent to the horizontal path  $\gamma$  if and only if  $p_v(\omega^\mathcal{H})$ is horizontal, where $\omega$ is the horizontal anti-development of $\gamma$. Intuitively, $v$ is tangent to $\gamma$ if it yields a variation of $\gamma$ in the horizontal directions only. More precisely, call a vector field $\xi$ along $\gamma \in W^\infty_{x_0,\mathcal{H}} (\M)$ an \textit{horizontal variation} of $\gamma$ if $\xi(x_0)=0$ and if there exists $(\sigma_t)_{t\in [-\ve,\ve] } \subset W^\infty_{x_0,\mathcal{H}} (\M)$ with $\sigma_0=\gamma$ such that $\left.\frac{d}{dt}\right|_{ t=0} (\sigma_t)_s =\xi_s$ for $s \in [0,1]$. Then, by Theorem \ref{Hsu deterministic} and Proposition \ref{v2jk},   $\xi$ is an horizontal variation of $\gamma$ if and only if $u_s(\gamma)^{-1} \xi_s$ is tangent to the horizontal path $\gamma$. Let us note that the notion of horizontal variation is independent from any metric and any connection. It therefore yields an intrinsic notion of horizontal tangent path space.  We are grateful to the referee for this observation.

\end{remark}

\begin{remark}
One should note that even if $v \in W_0^\infty(\mathbb{R}^{n+m})$ is tangent to the horizontal path  $\gamma$, it may not be true that for every $t \in \mathbb{R}$, $\zeta_t^v \gamma \in W^\infty_{x_0,\mathcal{H}} (\M)$.
\end{remark}

One  has the following characterization of tangent paths, which is the main result of the section.

\begin{thm}\label{tangent deterministic}
Let $\gamma \in W^\infty_{x_0,\mathcal{H}} (\M)$.  A path $v \in W_0^\infty(\mathbb{R}^{n+m})$ is tangent to the horizontal path  $\gamma$ if and only if the path
 \[
 v(s)-\int_0^s T_{u_r} (  A d\omega^\mathcal{H}_r ,  Av(r))
 \]
is horizontal, i.e. takes values in $\mathbb{R}^n$, where $\omega^\mathcal{H} $ is the horizontal anti-development of $\gamma$, $u$ is its $D$-lift to $\mathcal{O}(\M)$, and $T$ is  the torsion  of the Bott connection.
\end{thm}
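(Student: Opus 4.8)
The plan is to start from the formula for $p_v(\omega^\mathcal{H})$ in \eqref{pullback1} and to analyze which of its terms are automatically horizontal and which are not, using only the structure of the connection $D$ imposed by Assumption \ref{property D} together with the fact that $\omega^\mathcal{H}$ is a horizontal path (so $d\omega^\mathcal{H}_r \in \mathbb{R}^n$ for all $r$, and hence $A\,d\omega^\mathcal{H}_r$ is the relevant fundamental vector field while $V\,d\omega^\mathcal{H}_r=0$). By Definition \ref{d.horizontnagent} and the remark following it, $v$ is tangent to $\gamma$ if and only if $p_v(\omega^\mathcal{H})_s \in \mathbb{R}^n$ for all $s$. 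So the whole theorem reduces to identifying, inside $p_v(\omega^\mathcal{H})_s$, a canonical $\mathbb{R}^n$-valued part and a "defect" that is forced to lie in $\mathbb{R}^m$, and then showing that defect equals the $\mathbb{R}^m$-component of $\int_0^s T_{u_r}(A\,d\omega^\mathcal{H}_r, Av(r))$ (the torsion here being that of the Bott connection).

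First I would dispose of the curvature double-integral term. Since $D$ is metric, $\Omega^D_{u_\tau}$ takes values in $\mathfrak{o}(n+m)$; but more to the point, Assumption \ref{property D} says $D$ preserves the splitting $T\M = \mathcal{H}\oplus\mathcal{V}$, hence $\Omega^D_{u_\tau}$ preserves the splitting $\mathbb{R}^n \oplus \mathbb{R}^m$ of the fiber, i.e. it is block-diagonal. Therefore $\Omega^D_{u_\tau}(A\,d\omega^\mathcal{H}_\tau, Av(\tau)+Vv(\tau))$, when applied against $d\omega^\mathcal{H}_r \in \mathbb{R}^n$, produces a vector in $\mathbb{R}^n$. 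So the curvature term contributes only to $\mathbb{R}^n$ and is irrelevant to the horizontality question. Next, the torsion term: write $v(r) = v_\mathcal{H}(r) + v_\mathcal{V}(r)$ according to $\mathbb{R}^{n+m}=\mathbb{R}^n\oplus\mathbb{R}^m$, so $Av(r)+Vv(r) = Av_\mathcal{H}(r) + Vv_\mathcal{V}(r)$. By the third bullet of Assumption \ref{property D}, $D$ agrees with the Bott connection $\nabla$ when differentiating in horizontal directions; since $A\,d\omega^\mathcal{H}_r$ corresponds to a horizontal direction, $T^D_{u_r}(A\,d\omega^\mathcal{H}_r, \cdot)$ coincides with the torsion $T$ of the Bott connection evaluated on the same arguments. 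Now split: $T(A\,d\omega^\mathcal{H}_r, Av_\mathcal{H}(r))$ is, by \eqref{e.1.2}, vertical (an $\mathcal{H}\times\mathcal{H}$ torsion term), i.e. it lies in $\mathbb{R}^m$; and $T(A\,d\omega^\mathcal{H}_r, Vv_\mathcal{V}(r))$ is an $\mathcal{H}\times\mathcal{V}$ torsion term, which by \eqref{e.1.2} vanishes. Hence $\int_0^s T^D_{u_r}(A\,d\omega^\mathcal{H}_r, Av(r)+Vv(r)) = \int_0^s T_{u_r}(A\,d\omega^\mathcal{H}_r, Av(r))$ and this integral is $\mathbb{R}^m$-valued.

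Putting the pieces together: $p_v(\omega^\mathcal{H})_s = v(s) - \int_0^s T_{u_r}(A\,d\omega^\mathcal{H}_r, Av(r)) + (\text{an }\mathbb{R}^n\text{-valued term})$, where the displayed torsion integral is $\mathbb{R}^m$-valued. Decomposing $p_v(\omega^\mathcal{H})_s$ into its $\mathbb{R}^n$ and $\mathbb{R}^m$ parts, the $\mathbb{R}^m$-component is exactly $v_\mathcal{V}(s) - \int_0^s T_{u_r}(A\,d\omega^\mathcal{H}_r, Av(r))$, wait — more carefully, it is the $\mathbb{R}^m$-part of $\bigl(v(s) - \int_0^s T_{u_r}(A\,d\omega^\mathcal{H}_r, Av(r))\bigr)$, which is the same as the $\mathbb{R}^m$-part of $v(s)-\int_0^s T_{u_r}(A\,d\omega^\mathcal{H}_r, Av(r))$ since the rest of $p_v$ is horizontal. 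Thus $p_v(\omega^\mathcal{H})$ is horizontal $\iff$ this $\mathbb{R}^m$-part vanishes $\iff$ $v(s) - \int_0^s T_{u_r}(A\,d\omega^\mathcal{H}_r, Av(r)) \in \mathbb{R}^n$, which is precisely the asserted condition. The main obstacle I anticipate is bookkeeping rather than conceptual: one must be careful that $A$ and $V$ genuinely decompose as claimed on $\mathcal{O}_\mathcal{H}(\M)$ — that is, that the $D$-lift $u$ of a horizontal path $\gamma$ stays in the horizontal frame bundle $\mathcal{O}_\mathcal{H}(\M)$ so that $u(e_i)\in\mathcal{H}$ and $u(f_j)\in\mathcal{V}$ hold for all $s$ (this follows from Assumption \ref{property D} making $D$ preserve $\mathcal{H}\oplus\mathcal{V}$, so the frame bundle reduction is $D$-parallel), and that the identification $T^D = T$ on horizontal-direction arguments is applied with the correct slot conventions for the torsion tensor. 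Once those are pinned down, the argument is the term-by-term splitting above.
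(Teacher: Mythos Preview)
Your approach is essentially the same as the paper's, and the overall structure and final conclusion are correct. However, one intermediate claim is wrong and needs fixing. You assert that because $D_X=\nabla_X$ for $X$ horizontal, the torsion $T^D_{u_r}(A\,d\omega^\mathcal{H}_r,\cdot)$ coincides with the Bott torsion $T_{u_r}(A\,d\omega^\mathcal{H}_r,\cdot)$. This is false: the torsion $T^D(X,Y)=D_XY-D_YX-[X,Y]$ also involves $D_YX$, and when $Y$ is vertical there is no reason for $D_YX$ to equal $\nabla_YX$. Writing $J:=D-\nabla$ one has
\[
T^D(X,Y)=T(X,Y)+J_XY-J_YX;
\]
for $X$ horizontal the third bullet of Assumption \ref{property D} gives $J_X=0$, but $J_YX$ need not vanish. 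In particular $T^D(A\,d\omega^\mathcal{H}_r,\,Vv_\mathcal{V}(r))=-\,(J_{Vv_\mathcal{V}(r)})_{u_r}(A\,d\omega^\mathcal{H}_r)$, which is generally nonzero, so your displayed equality $\int_0^s T^D_{u_r}(A\,d\omega^\mathcal{H}_r,\,Av(r)+Vv(r))=\int_0^s T_{u_r}(A\,d\omega^\mathcal{H}_r,\,Av(r))$ fails.

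What rescues the argument is that $J_YX$ is horizontal (both $D$ and $\nabla$ preserve $\mathcal{H}$, so $J_YX\in\Gamma^\infty(\mathcal{H})$ whenever $X\in\Gamma^\infty(\mathcal{H})$). Hence the \emph{vertical parts} of $T^D(A\,d\omega^\mathcal{H}_r,\cdot)$ and $T(A\,d\omega^\mathcal{H}_r,\cdot)$ do agree, and the extra $-J_{Vv}X$ term is absorbed into your ``$\mathbb{R}^n$-valued term''. This is exactly how the paper proceeds: it introduces $J$, observes $J_X=0$ and $J_YX\in\mathcal{H}$ for $X$ horizontal, and concludes only that the vertical parts match. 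With this correction your proof is complete and matches the paper's.
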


\begin{proof}
The path $v \in W_0^\infty(\mathbb{R}^{n+m})$ is tangent to the horizontal path  $\gamma$ if and only if the path
\begin{align*}
 p_v(\omega^\mathcal{H})_{s} & = v(s)-\int_0^s T^D_{u_s} (  A d\omega^\mathcal{H}_r ,  Av(r)+Vv(r))  \\
 & -\int_0^s \left(  \int_0^r \Omega^D_{u_\tau}( A   d \omega^\mathcal{H}_\tau , A v(\tau)+V v (\tau)) \right)   d \omega^\mathcal{H}_r \notag
\end{align*}
is horizontal. Since $D$ satisfies Assumption \ref{property D}, the integral
\[
\int_0^s \left(  \int_0^r \Omega^D_{u_\tau}( A   d \omega^\mathcal{H}_\tau , A v(\tau)+V v (\tau)) \right)   d \omega^\mathcal{H}_r
\]
is always horizontal. Let us now denote by $J$ the difference between connections $D$ and $\nabla$, that is, the tensor $J$ is defined  for any $X, Y \in \Gamma^\infty(\M)$ by
\[
J_XY =D_X Y -\nabla_X Y.
\]
We have then
\begin{align*}
T^D(X,Y)&=D_X Y-D_Y X-[X,Y] \\
 &=T(X,Y)+J_X Y -J_Y X.
\end{align*}
Let us assume that $X$ is horizontal. We have then $J_X=0$, because $D_\mathcal{H}=\nabla_\mathcal{H}$. Also $J_Y X$ is horizontal, because $D$ is adapted to the foliation $\mathcal{F}$. We deduce that the vertical part of
\[
 v(s)-\int_0^s T^D_{u_r} (  Ad\omega^\mathcal{H}_r ,  Av(r)+Vv(r))
 \]
 is the same as the vertical part of
 \[
 v(s)-\int_0^s T_{u_r} (  A d\omega^\mathcal{H}_r ,  Av(r)+Vv(r)).
 \]
 We conclude that the vertical part of $p_v(\omega^\mathcal{H})$ is zero if and only if the vertical part of
 \[
  v(s)-\int_0^s T_{u_r} (  A d\omega^\mathcal{H}_r ,  Av(r)+Vv(r)).
 \]
 is zero. By the  properties in Equation \eqref{e.1.2},  we have
 \[
 \int_0^s T_{u_r} (  A d\omega^\mathcal{H}_r ,  Vv(r))=0,
 \]
which concludes the proof.
\end{proof}

\begin{remark}
{\ }
\label{Remark independent}
By Theorem \ref{tangent deterministic}, the notion of tangent path  does not depend on the particular choice of the connection $D$ as long as it satisfies Assumption \ref{property D}.
\end{remark}

\subsection{Variations on the horizontal path space}\label{s.HorizontVariations}

In this section, we describe two types of variations on the horizontal path space that are induced by tangent paths. The first one is explicit and inspired by the approach by B.~Driver in \cite{Driver2004a}. The second one is  based on more classical flow constructions. The key ingredient is the following lemma.

\begin{lem}\label{construction tangent}
Let $h \in W_{0,\mathcal{H}}^\infty(\mathbb{R}^{n+m})$. If $\omega^\mathcal{H} \in W_{0,\mathcal{H}}^\infty(\mathbb{R}^{n+m})$, then
\begin{align}\label{j'en ai marre}
\tau_h (\omega^\mathcal{H})_{s}=h(s)+\int_0^s T_{u_r} (  A d\omega^\mathcal{H}_r ,  Ah(r))
\end{align}
is a tangent path to $\phi(\omega^\mathcal{H})$, where $u$ denotes the $D$-lift of the horizontal development of $\omega^\mathcal{H}$.
\end{lem}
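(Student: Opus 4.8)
The goal is to show that $\tau_h(\omega^\mathcal{H})$, as defined in \eqref{j'en ai marre}, satisfies the tangency criterion of Theorem \ref{tangent deterministic} with respect to the horizontal path $\gamma = \phi(\omega^\mathcal{H})$. Since the horizontal anti-development of $\gamma$ is $\omega^\mathcal{H}$ itself (the path used to build $\gamma$), I can plug $v = \tau_h(\omega^\mathcal{H})$ into the criterion directly. First I would write $v(s) = h(s) + \int_0^s T_{u_r}(A\,d\omega^\mathcal{H}_r, Ah(r))$ and form the expression
\[
v(s) - \int_0^s T_{u_r}\bigl(A\,d\omega^\mathcal{H}_r,\ Av(r)\bigr),
\]
and then show it takes values in $\mathbb{R}^n$, i.e. that its $\mathcal{V}$-component vanishes.

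The key structural input is the behavior of the Bott torsion $T$ on the frame bundle: by the properties in \eqref{e.1.2}, for horizontal vector fields $T$ takes values in $\mathcal{V}$, while $T(X,U)=0$ whenever $U$ is vertical and $X$ is horizontal, and $T$ vanishes on pairs of vertical vectors. Translated to the fundamental vector fields $A, V$ via Notation \ref{HVframe}, this means $T_{u_r}(A\,d\omega^\mathcal{H}_r, \cdot)$ "kills" the vertical component of its second argument and has image in the vertical directions. Now decompose $v(r) = Av(r) + Vv(r)$; since $h$ is horizontal, $h(r) \in \mathbb{R}^n$ and the extra term $\int_0^r T_{u_\sigma}(A\,d\omega^\mathcal{H}_\sigma, Ah(\sigma))$ is vertical by \eqref{e.1.2}. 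Hence $Av(r) = Ah(r)$ (the horizontal part of $v(r)$ is exactly $h(r)$) and $Vv(r) = V\!\left(\int_0^r T_{u_\sigma}(A\,d\omega^\mathcal{H}_\sigma, Ah(\sigma))\right)$. Therefore $T_{u_r}(A\,d\omega^\mathcal{H}_r, Av(r)) = T_{u_r}(A\,d\omega^\mathcal{H}_r, Ah(r))$, because the contribution of $Vv(r)$ vanishes by the property $T(\mathcal{H},\mathcal{V})=0$. Substituting,
\[
v(s) - \int_0^s T_{u_r}\bigl(A\,d\omega^\mathcal{H}_r, Av(r)\bigr) = h(s) + \int_0^s T_{u_r}\bigl(A\,d\omega^\mathcal{H}_r, Ah(r)\bigr) - \int_0^s T_{u_r}\bigl(A\,d\omega^\mathcal{H}_r, Ah(r)\bigr) = h(s),
\]
which is horizontal since $h \in W^\infty_{0,\mathcal{H}}(\mathbb{R}^{n+m})$. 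By Theorem \ref{tangent deterministic}, $\tau_h(\omega^\mathcal{H})$ is tangent to $\phi(\omega^\mathcal{H})$.

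I expect the only genuine subtlety to be bookkeeping about which connection's lift $u$ is: the statement says $u$ is the $D$-lift of the horizontal development, so I must make sure the torsion appearing in Theorem \ref{tangent deterministic} (the Bott torsion $T$, \emph{not} $T^D$) is the one used in \eqref{j'en ai marre} — which it is, by construction. Everything else is a routine consequence of the vanishing properties \eqref{e.1.2} of the Bott torsion together with the elementary observation that the correction term added to $h$ in the definition of $\tau_h$ is purely vertical, so it is precisely the term subtracted off in the tangency criterion. No further computation is needed.
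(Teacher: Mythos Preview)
Your proof is correct and follows essentially the same route as the paper: both arguments observe that since $T$ is vertical and $h$ is horizontal, the horizontal part of $v=\tau_h(\omega^\mathcal{H})$ is exactly $h$, so $Av(r)=Ah(r)$ and the tangency expression of Theorem \ref{tangent deterministic} collapses to $h(s)$. One small redundancy: since the criterion in Theorem \ref{tangent deterministic} already involves only $Av(r)$ (not $(A+V)v(r)$), your appeal to $T(\mathcal{H},\mathcal{V})=0$ is unnecessary---the equality $T_{u_r}(A\,d\omega^\mathcal{H}_r, Av(r)) = T_{u_r}(A\,d\omega^\mathcal{H}_r, Ah(r))$ follows directly from $Av(r)=Ah(r)$.
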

\begin{proof}
Let
\[
v(s)=h(s)+\int_0^s T_{u_r} (  A d\omega^\mathcal{H}_r ,  Ah(r)).
\]
Since $h$ is horizontal and $T$ is a vertical tensor, one deduces that the horizontal part of $v$ is $h$. Therefore,
\[
v(s) -\int_0^s T_{u_r} (  Ad\omega^\mathcal{H}_r ,  Av(r))=h(s)
\]
is horizontal.
\end{proof}

Let $v \in W_0^\infty(\mathbb{R}^{n+m})$, $\omega^\mathcal{H} \in W_{0,\mathcal{H}}^\infty(\mathbb{R}^{n+m})$ and assume that $v$ is tangent to the horizontal development of $\omega^\mathcal{H}$. Recall that
\begin{align*}
 p_v(\omega^\mathcal{H})_{s} & = v(s)-\int_0^s T^D_{u_s} (  A d\omega^\mathcal{H}_r ,  Av(r)+Vv(r))
 \\
 & -\int_0^s \left(  \int_0^r \Omega^D_{u_\tau}( A   d \omega^\mathcal{H}_\tau , A v(\tau)+V v (\tau)) \right)   d \omega^\mathcal{H}_r. \notag
\end{align*}
As before,  let us now denote by $J$ the difference between connections $D$ and $\nabla$. For $X,Y \in \Gamma^\infty(\M)$, we have thus
\[
J_XY =D_X Y -\nabla_X Y.
\]
We can then write
\begin{align*}
p_v(\omega^\mathcal{H})_{s}  &= v_\mathcal{H}(s)+\int_0^s (J_{Vv(r) })_{u_r} (  A d\omega^\mathcal{H}_r )
\\
& -  \int_0^s \left(  \int_0^r \Omega^D_{u_\tau}( A   d \omega^\mathcal{H}_\tau , A v(\tau)+V v (\tau)) \right)   d \omega^\mathcal{H}_r. \notag
\end{align*}

More concisely, we have therefore
\[
p_v (\omega^\mathcal{H})_s =v_\mathcal{H} (s) +\int_0^s q_v(\omega^\mathcal{H})_u d\omega^\mathcal{H}_u,
\]
where $q_v(\omega^\mathcal{H})_u  \in \mathfrak{so}(n)$ is defined in such a way that
\begin{align*}
 & \int_0^s q_v(\omega^\mathcal{H})_u d\omega^\mathcal{H}_u  \\
 =&\int_0^s (J_{Vv(r) })_{u_r} (  A d\omega^\mathcal{H}_r ) - \int_0^s \left(  \int_0^r \Omega^D_{u_\tau}( A   d \omega^\mathcal{H}_\tau , A v(\tau)+V v (\tau)) \right)   d \omega^\mathcal{H}_r.
\end{align*}
As a consequence, with the above notation, one has that for every $h \in W_{0,\mathcal{H}}^\infty(\mathbb{R}^{n+m})$
\[
p_{\tau_h(\omega^\mathcal{H})}  (\omega^\mathcal{H})_s =h (s)+\int_0^s q_{_{\tau_h(\omega^\mathcal{H})}}(\omega^\mathcal{H})_u d\omega^\mathcal{H}_u ,
\]
We are now ready to introduce two relevant variations of horizontal paths.
\begin{notation}
Let $h \in W_{0,\mathcal{H}}^\infty(\mathbb{R}^{n+m})$.
\begin{enumerate}

\item For $t \in \mathbb{R} $, we define  a map $\rho_t^h: W_{0,\mathcal{H}}^\infty(\mathbb{R}^{n+m}) \to W_{0,\mathcal{H}}^\infty(\mathbb{R}^{n+m})$ by
\begin{align}\label{variation tangent}
(\rho_t^h \omega^\mathcal{H})_s :=\int_0^s e^{t q_{\tau_h (\omega^\mathcal{H})}(\omega^\mathcal{H})_u} d\omega^\mathcal{H}_u+t h(s).
\end{align}
\item For $t \in \mathbb{R} $, we define  a map $\nu_t^h: W_{0,\mathcal{H}}^\infty(\mathbb{R}^{n+m}) \to W_{0,\mathcal{H}}^\infty(\mathbb{R}^{n+m}) $ as the flow generated by $p_{\tau_h}$
\[
\frac{d}{dt} (\nu_t^h \omega^\mathcal{H})_s=p_{\tau^h(\nu_t^h \omega^\mathcal{H})}(\nu_t^h \omega^\mathcal{H})_s, \quad \nu_0^h \omega^\mathcal{H}= \omega^\mathcal{H}.
\]
\end{enumerate}

\end{notation}

\begin{remark}
Unless $q_{\tau_h}=0$,  the family $\left\{ \rho_t^h, t \in \mathbb{R} \right\}$ is \textbf{not} a flow on $W_{0,\mathcal{H}}^\infty(\mathbb{R}^{n+m})$, but it is  a convenient explicit  one-parameter variation, since we observe that $\rho_0^h \omega^\mathcal{H}= \omega^\mathcal{H}$ and
\[
\frac{d}{dt}\left|_{ t=0}  (\rho_t^h \omega^\mathcal{H})_s \right. =p_{\tau_h (\omega^\mathcal{H})}(\omega^\mathcal{H})_s.
\]
\end{remark}
We then have  the following result, which is immediate in view of Theorem \ref{Hsu deterministic} since
\[
\frac{d}{dt}\left|_{ t=0}  (\rho_t^h \omega^\mathcal{H})_s \right. =\frac{d}{dt}\left|_{ t=0}  (\nu_t^h \omega^\mathcal{H})_s \right.=p_{\tau_h (\omega^\mathcal{H})}(\omega^\mathcal{H})_s.
\]

\begin{proposition}[Variation of horizontal paths along tangent paths]\label{v2jk}

Let $h \in W_{0,\mathcal{H}}^\infty(\mathbb{R}^{n+m})$, then for every $\gamma \in W_\mathcal{H}^\infty(\M)$
\[
\left.\frac{d}{dt}\right|_{ t=0}  \phi_\mathcal{H} \circ \rho_t^h  \circ \phi_\mathcal{H}^{-1}(\gamma)_s =\left.\frac{d}{dt}\right|_{ t=0}  \phi_\mathcal{H} \circ \nu_t^h  \circ \phi_\mathcal{H}^{-1}(\gamma)_s=u_s(\gamma) \tau_h (\omega^\mathcal{H})_s,
\]
where $u$ is the $D$-lift of $\gamma$, and $\omega^\mathcal{H}$ is its horizontal development.
\end{proposition}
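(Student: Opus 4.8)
The plan is to reduce the statement to E.~Hsu's deterministic variation formula, Theorem~\ref{Hsu deterministic}, together with two observations already recorded in the discussion above, so that essentially no new computation is needed.

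First I would record two reductions. Since $\rho_t^h$ and $\nu_t^h$ preserve $W^\infty_{0,\mathcal{H}}(\mathbb{R}^{n+m})$, and the rolling maps $\phi$ (Definition~\ref{develop}) and $\phi_\mathcal{H}$ (Definition~\ref{d.HorizSmoothItoMaps}) are built from the same ODE driven by the same fixed frame $u_0$ — indeed, when the driving path takes values in $\mathbb{R}^n$ the term $V_{u_s}\,d\omega_s$ in \eqref{e.RM} vanishes and \eqref{e.RM} collapses to \eqref{e.HorizontalRM} — the maps $\phi$ and $\phi_\mathcal{H}$ coincide on horizontal paths; hence, writing $\omega^\mathcal{H}=\phi_\mathcal{H}^{-1}(\gamma)=\phi^{-1}(\gamma)$ and letting $u$ be the common $D$-lift of $\gamma$,
\[
\phi_\mathcal{H}\circ\rho_t^h\circ\phi_\mathcal{H}^{-1}(\gamma)=\phi(\rho_t^h\omega^\mathcal{H}),\qquad \phi_\mathcal{H}\circ\nu_t^h\circ\phi_\mathcal{H}^{-1}(\gamma)=\phi(\nu_t^h\omega^\mathcal{H}).
\]
Moreover, both curves $t\mapsto\rho_t^h\omega^\mathcal{H}$ and $t\mapsto\nu_t^h\omega^\mathcal{H}$ pass through $\omega^\mathcal{H}$ at $t=0$ with velocity $p_{\tau_h(\omega^\mathcal{H})}(\omega^\mathcal{H})$: for $\rho_t^h$ this is precisely the content of the Remark immediately preceding the Proposition, and for $\nu_t^h$ it follows by evaluating its defining flow equation at $t=0$. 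So it suffices to compute $\left.\frac{d}{dt}\right|_{t=0}\phi(\sigma_t)_s$ for a smooth curve $\sigma_t$ in $W_0^\infty(\mathbb{R}^{n+m})$ with $\sigma_0=\omega^\mathcal{H}$ and $\dot\sigma_0=p_{\tau_h(\omega^\mathcal{H})}(\omega^\mathcal{H})$, and to check that it equals $u_s(\gamma)\,\tau_h(\omega^\mathcal{H})_s$.

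Next I would freeze $\omega^\mathcal{H}$ and set $v:=\tau_h(\omega^\mathcal{H})\in W_0^\infty(\mathbb{R}^{n+m})$, now a fixed path, and consider the flow $\xi_t^{v}=\phi^{-1}\circ\zeta_t^{v}\circ\phi$. Applying Theorem~\ref{Hsu deterministic} to $v$ and $\omega=\omega^\mathcal{H}$ gives $\left.\frac{d}{dt}\right|_{t=0}\xi_t^{v}(\omega^\mathcal{H})_s=p_{v}(\omega^\mathcal{H})_s=p_{\tau_h(\omega^\mathcal{H})}(\omega^\mathcal{H})_s$, so $t\mapsto\xi_t^{v}(\omega^\mathcal{H})$ is a further curve through $\omega^\mathcal{H}$ with the same velocity as $\sigma_t$. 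Because the value $\phi(\sigma_t)(s)$ solves the linear frame-bundle ODE \eqref{e.RM} driven by $\sigma_t$, its first variation at $t=0$ is governed by the associated linearized (variation) equation, which depends on the curve only through $\sigma_0$ and $\dot\sigma_0$; therefore
\[
\left.\frac{d}{dt}\right|_{t=0}\phi(\rho_t^h\omega^\mathcal{H})_s=\left.\frac{d}{dt}\right|_{t=0}\phi(\nu_t^h\omega^\mathcal{H})_s=\left.\frac{d}{dt}\right|_{t=0}\phi\!\left(\xi_t^{v}(\omega^\mathcal{H})\right)_s.
\]
Finally, from $\phi\circ\xi_t^{v}=\zeta_t^{v}\circ\phi$ and $\gamma=\phi(\omega^\mathcal{H})=\phi_\mathcal{H}(\omega^\mathcal{H})$ we obtain $\phi\!\left(\xi_t^{v}(\omega^\mathcal{H})\right)=\zeta_t^{v}(\gamma)$, and since $\zeta_t^{v}$ is the flow of $\mathbf{D}_{v}$,
\[
\left.\frac{d}{dt}\right|_{t=0}\phi\!\left(\xi_t^{v}(\omega^\mathcal{H})\right)_s=\left.\frac{d}{dt}\right|_{t=0}\zeta_t^{v}(\gamma)_s=\mathbf{D}_{v}(\gamma)_s=u_s(\gamma)\,v_s=u_s(\gamma)\,\tau_h(\omega^\mathcal{H})_s,
\]
with $u$ the $D$-lift of $\gamma$. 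Combined with the first step, this is exactly the assertion of the Proposition.

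The only genuinely nontrivial step is the interchange used in the second display: that the directional derivative of the rolling map $\phi$ along a curve depends only on the curve's base point and velocity. This is the standard smooth-dependence-on-parameters statement for the frame-bundle equation \eqref{e.RM}, and is established exactly as the analogous differentiability statements in \cite{Hsu1995b} and \cite{Driver1992b}; apart from it, the argument uses only Theorem~\ref{Hsu deterministic} and the definitions of $\rho_t^h$, $\nu_t^h$, $\tau_h$, $\phi$, and $\phi_\mathcal{H}$.
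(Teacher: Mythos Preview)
Your proof is correct and follows essentially the same approach as the paper. The paper's own argument is a one-line remark that the result is ``immediate in view of Theorem~\ref{Hsu deterministic}'' because both $\rho_t^h$ and $\nu_t^h$ have the same derivative $p_{\tau_h(\omega^\mathcal{H})}(\omega^\mathcal{H})$ at $t=0$; you have simply unpacked this, making explicit the identification of $\phi$ with $\phi_\mathcal{H}$ on horizontal paths, the smooth-dependence argument that the derivative of $\phi$ depends only on first-order data, and the passage through $\xi_t^v=\phi^{-1}\circ\zeta_t^v\circ\phi$ back to $\mathbf{D}_v(\gamma)=u_s(\gamma)v_s$.
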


%

\section{Quasi-invariance of the horizontal Wiener measure}\label{p.1}

In this part of the paper, we first describe two constructions of the horizontal Brownian motion, and then we develop horizontal stochastic calculus and prove  quasi-invariance of the horizontal Wiener measure. Throughout this section we consider a smooth connected $n+m$-dimensional Riemannian manifold $\M$  equipped with the structure of an $m$-dimensional foliation $\mathcal{F}$,  a bundle-like metric $g$ and totally geodesic $m$-dimensional leaves. In addition, we assume that $\M$ is compact.

\subsection{Horizontal Brownian motion}\label{SectionBM}

\subsubsection{Construction from the horizontal Dirichlet form}\label{Dirichlet}

We define the horizontal gradient $\nabla_\mathcal{H} f$ of a smooth function $f$ as the projection of the Riemannian gradient of $f$ on the horizontal bundle $\mathcal{H}$. Similarly, we define the vertical gradient $\nabla_\mathcal{V} f$ of a function $f$ as the projection of the Riemannian gradient of $f$ on the vertical bundle $\mathcal{V}$.

Consider the pre-Dirichlet form
\[
\mathcal{E}_{\mathcal{H}} (f, h) =\int_\bM g_{\mathcal{H}}\left( \nabla_\mathcal{H} f, \nabla_\mathcal{H} h \right)\hskip0.05in  d\operatorname{Vol}, \hskip0.05in f, h \in C^\infty (\M),
\]
where $d\operatorname{Vol}$ is the Riemannian volume measure on $\M$. We note that $\mathcal{E}_{\mathcal{H}}$ is closable since it can be dominated by the Dirichlet form generated by the Laplace-Beltrami on $\M$ which is closable since $\M$ is compact, thus complete.
Then there exists a unique diffusion operator $L$ on $\M$ such that for all $f, h \in C^\infty (\M)$
\[
\mathcal{E}_{\mathcal{H}} (f, h) =-\int_\M f L h \hskip0.05in d\operatorname{Vol}=-\int_\M h Lf \hskip0.05in d\operatorname{Vol}.
\]
The operator $L$ is called the \emph{horizontal Laplacian} of the foliation. If $\left\{ X_{i} \right\}_{i=1}^{n}$ is a local orthonormal frame of horizontal vector fields, then we can write $L$ in this frame
\begin{align}\label{Hormander form}
L=\sum^n_{i=1} X_i^2 +X_0,
\end{align}
where $X_0$ is a smooth vector field.
Observe that the subbundle $\mathcal{H}$ satisfies H\"{o}rmander's (bracket generating) condition, therefore by H\"ormander's theorem the operator $L$ is locally subelliptic (for comments on this terminology introduced by Fefferman-Phong we refer to \cite{FeffermanPhong1983},  see also the survey papers   \cite{BaudoinLevico, JerisonSanchez-Calle1987} or  \cite[p. 944]{DriverGrossSaloff-Coste2009a}).

By \cite[Proposition 5.1]{BaudoinEMS2014} the completeness of the Riemannian metric $g$ implies that $L$  is essentially self-adjoint on $C^\infty(\M)$ and thus that $\mathcal{E}_\mathcal{H}$ is uniquely closable. Then we can define the semigroup $P_s=e^{\frac{s}{2} L}$ by using the spectral theorem. The diffusion process $\left\{ W_s \right\}_{s \geqslant 0}$ corresponding to the semigroup $\left\{ P_s \right\}_{s \geqslant 0}$ will be called the \emph{horizontal Brownian motion} on the Riemannian foliation $(\mathcal{F}, g)$. Since $\M$ is assumed to be compact, $1 \in \operatorname{dom} (\mathcal{E}_\mathcal{H})$ and thus $P_s 1=1$. This implies that $\left\{ W_s \right\}_{s \geqslant 0}$ is a non-explosive diffusion.

If the horizontal Laplacian can be written in the form \ref{Hormander form} globally for smooth horizontal vector fields $X_0,X_1,\cdots, X_n$, then $\left\{ W_t \right\}_{t \geqslant 0}$ can be constructed from a stochastic differential equation on $\M$.

Even if the horizontal Laplacian can not be written in the form \ref{Hormander form} globally, the horizontal Brownian motion  $\left\{ W_s \right\}_{s \geqslant 0}$ can still be constructed from a globally defined stochastic differential equation on a bundle over $\M$ (see  \cite[Theorem 3.8]{Elworthy2014} or Corollary \ref{projec}). The following section provides an explicit description of such a construction that shall be used in the sequel.

\subsubsection{Construction  from the  orthonormal frame bundle}\label{Eels-Elworthy}
We can write the vector fields $\left\{ A_i \right\}_{i=1}^{n}$ locally in terms of the normal frames introduced in \cite{BaudoinKimWang2016}.

\begin{lemma}[Lemma 2.2 in \cite{BaudoinKimWang2016}]\label{frame}
Let $x_0 \in \M$. Around $x_0$, there exist a local orthonormal horizontal frame $\left\{ X_1, \cdots, X_n \right\}$ and a local orthonormal vertical frame $\left\{ Z_1, \cdots, Z_m \right\}$ such that the following structure relations hold
\begin{align*}
& [X_i,X_j]=\sum_{k=1}^n \omega_{ij}^k X_k +\sum_{k=1}^m \gamma_{ij}^k Z_k,
\\
& [X_i,Z_k]=\sum_{j=1}^m \beta_{ik}^j Z_j,
\end{align*}
where $\omega_{ij}^k,  \gamma_{ij}^k,  \beta_{ik}^j $ are smooth functions such that
\[
 \beta_{ik}^j=- \beta_{ij}^k.
\]
Moreover, at $x_0$ we have
\[
 \omega_{ij}^k=0,  \beta_{ij}^k=0.
\]
\end{lemma}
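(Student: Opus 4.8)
The plan is to build the two frames explicitly from a local defining submersion of $\mathcal{F}$, exploiting that the fibres are totally geodesic and the metric is bundle-like, and to force the vanishing of $\omega_{ij}^k$ and $\beta_{ij}^k$ at $x_0$ by a careful choice of the initial data and of how the frames are extended off $x_0$. \emph{Step 1 (submersion chart and horizontal frame).} Fix a pair $(U_\alpha,\pi_\alpha)$ from Definition \ref{d.Foliation} with $x_0\in U_\alpha$, write $\pi:=\pi_\alpha$, $\bar x_0:=\pi(x_0)$, and after shrinking $U_\alpha$ assume $\pi$ is a Riemannian submersion onto a geodesically convex ball $U_\alpha^0$ around $\bar x_0$ with connected fibres (possible by the bundle-like assumption). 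On $U_\alpha^0$ take the orthonormal frame $\bar X_1,\dots,\bar X_n$ obtained by radial parallel transport, for the Levi-Civita connection $\nabla^0$ of $U_\alpha^0$, of an orthonormal basis of $T_{\bar x_0}U_\alpha^0$; then $\nabla^0_{\bar X_i}\bar X_j(\bar x_0)=0$, hence $[\bar X_i,\bar X_j](\bar x_0)=0$. Let $X_i$ be the horizontal lift of $\bar X_i$ as in Example \ref{connection submersion}; the $X_i$ are basic and, $\pi$ being a Riemannian submersion, an orthonormal horizontal frame near $x_0$. Since $X_i,X_j$ are basic, $[X_i,X_j]$ is $\pi$-related to $[\bar X_i,\bar X_j]$, so $\pi_\mathcal{H}[X_i,X_j]$ is the horizontal lift of $[\bar X_i,\bar X_j]$; writing $[X_i,X_j]=\sum_k\omega_{ij}^kX_k+\sum_k\gamma_{ij}^kZ_k$ (this defines $\omega_{ij}^k,\gamma_{ij}^k$ once the $Z_k$ are fixed), evaluation at $x_0$ gives $\omega_{ij}^k(x_0)=0$.

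\emph{Step 2 (vertical frame).} Let $F$ be the leaf through $x_0$ and pick any local orthonormal frame $Z_1^0,\dots,Z_m^0$ of $\mathcal{V}|_F$ near $x_0$. Extend it to a neighbourhood of $x_0$ in $\M$ by $\nabla$-parallel transport along horizontal lifts of rays: for $y$ near $x_0$ let $\sigma_y$ be the geodesic of $U_\alpha^0$ from $\bar x_0$ to $\pi(y)$, let $\widetilde\sigma_y$ be the horizontal lift of $\sigma_y$ through $\pi$ whose endpoint is $y$ (unique), let $p_y\in F$ be its initial point, and set $Z_j(y):=$ the $\nabla$-parallel transport of $Z_j^0(p_y)$ along $\widetilde\sigma_y$. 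By the properties in \eqref{e.1.2}, $\nabla$-parallel transport along a horizontal curve preserves $\mathcal{V}$, and since $\nabla$ is metric it is an isometry; combined with smooth dependence of the lifting ODE on $y$ and the fact that $y\mapsto(p_y,\pi(y))$ is a diffeomorphism of a neighbourhood of $x_0$ onto a neighbourhood of $(x_0,\bar x_0)$ in $F\times U_\alpha^0$ (checked via the inverse function theorem), this yields a smooth orthonormal vertical frame $Z_1,\dots,Z_m$ near $x_0$.

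\emph{Step 3 (structure relations and vanishing).} Since each $X_i$ is basic and each $Z_k$ vertical, $d\pi([X_i,Z_k])=[\bar X_i,0]=0$, so $[X_i,Z_k]$ is vertical; set $\beta_{ik}^j:=\langle[X_i,Z_k],Z_j\rangle$, so $[X_i,Z_k]=\sum_j\beta_{ik}^jZ_j$. For $X_i$ horizontal and $Z_k$ vertical the Bott connection gives $\nabla_{X_i}Z_k=\pi_\mathcal{V}[X_i,Z_k]=[X_i,Z_k]$, hence metric-compatibility of $\nabla$ yields $0=X_i\langle Z_k,Z_j\rangle=\langle\nabla_{X_i}Z_k,Z_j\rangle+\langle Z_k,\nabla_{X_i}Z_j\rangle=\beta_{ik}^j+\beta_{ij}^k$, i.e. $\beta_{ik}^j=-\beta_{ij}^k$. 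Finally, for each $V\in\mathcal{H}_{x_0}$ the horizontal lift through $x_0$ of the ray from $\bar x_0$ in the direction $d\pi(V)$ is among the curves $\widetilde\sigma_y$ (take $y$ on that lift, so $p_y=x_0$), and along it $Z_j$ is $\nabla$-parallel by construction, so $\nabla_VZ_j|_{x_0}=0$. In particular $\nabla_{X_i}Z_j|_{x_0}=0$, so $\beta_{ij}^k(x_0)=\langle\nabla_{X_i}Z_j,Z_k\rangle|_{x_0}=0$, which is the last required property.

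\emph{Main obstacle.} I expect the only genuinely delicate point to be Step 2. One cannot simply take a $\nabla$-radially parallel frame of $T_{x_0}\M$: then $[X_i,Z_k]$ need not be vertical away from $x_0$, and verticality of $[X_i,Z_k]$ is essential (it needs $X_i$ basic, i.e. tied to the submersion). Thus the horizontal frame must descend from the base while the vertical frame must be transported along horizontal lifts, and the bookkeeping that these two constructions are compatible — in particular that the vertical extension is well defined, smooth, and $\nabla$-parallel in every horizontal direction at $x_0$ — is where the care is required; everything else follows from the definition of the Bott connection and the relations \eqref{e.1.2}.
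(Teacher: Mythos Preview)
Your proof is correct. Note, however, that the paper does not supply its own proof of this lemma: it is quoted verbatim from \cite[Lemma~2.2]{BaudoinKimWang2016} and used as a black box, so there is no ``paper's proof'' to compare against here.

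Your argument is sound and self-contained. The three ingredients you isolate are exactly the right ones: (i) the horizontal frame must be basic (lifted from the base of the local submersion) so that $[X_i,Z_k]$ is automatically vertical, and choosing it normal at $\bar x_0$ forces $\omega_{ij}^k(x_0)=0$; (ii) the antisymmetry $\beta_{ik}^j=-\beta_{ij}^k$ is a direct consequence of $\nabla_{X_i}Z_k=\pi_{\mathcal V}[X_i,Z_k]=[X_i,Z_k]$ together with $\nabla g=0$, and this is where the totally geodesic/bundle-like hypotheses enter, via metric-compatibility of the Bott connection; (iii) the radial $\nabla$-parallel extension of the vertical frame along horizontal lifts of base geodesics gives $\nabla_{X_i}Z_j(x_0)=0$, hence $\beta_{ij}^k(x_0)=0$. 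Your remark that a single radial parallel frame of $T_{x_0}\M$ would not do the job (because then the $X_i$ need not stay basic, and verticality of $[X_i,Z_k]$ would be lost away from $x_0$) is precisely the point and shows you understand why the two frames must be built by different mechanisms.

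One small cosmetic comment: in Step~2 you could spare yourself the inverse function theorem argument by noting that $(p,\bar y)\mapsto$ endpoint of the horizontal lift through $p$ of the geodesic from $\bar x_0$ to $\bar y$ is already a smooth local diffeomorphism from $F\times U_\alpha^0$ onto a neighbourhood of $x_0$ (this is the standard ``tubular neighbourhood via horizontal lifts'' for a Riemannian submersion), and the map $y\mapsto(p_y,\pi(y))$ is its inverse. This does not change the substance of your argument.
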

We will also need the fact (see \cite[p. 918]{BaudoinKimWang2016}) that in this frame the Christoffel symbols of the Bott connection $\nabla$ are given by
\begin{align*}
& \nabla_{X_i} X_j =\frac{1}{2} \sum_{k=1}^n \left( \omega_{ij}^k +\omega_{ki}^j+\omega_{kj}^i\right)X_k,
\\
& \nabla_{Z_j} X_i =0,
\\
& \nabla_{X_i} Z_j=\sum_{k=1}^m \beta_{ij}^{k} Z_k.
\end{align*}
Thus, from the assumption that $D_\mathcal{H}=\nabla_\mathcal{H}$, we have
\begin{align*}
& D_{X_i} X_j =\frac{1}{2} \sum_{k=1}^n \left( \omega_{ij}^k +\omega_{ki}^j+\omega_{kj}^i\right)X_k, \\
& D_{X_i} Z_j=\sum_{k=1}^m \beta_{ij}^{k} Z_k.
\end{align*}
For $x_0 \in \M$ we let $\{ X_1\cdots, X_n, Z_1, \cdots, Z_m \}$  be a normal frame around $x_0$. If  $u \in \mathcal{O}_\mathcal{H} (\M)$ is a horizontal isometry, we can find an orthogonal matrix $\left\{ e_i^j \right\}_{i, j=1}^{n}$ such that $u(e_i)=\sum_{j=1}^n e_i^j X_j$, and  $u(f_i)=\sum_{j=1}^m f_i^j  Z_j$ for $f_i^j$, $i=1, ..., n$, $j=1, ..., m$. Let $\overline{X}_j$ be the vector field on $\mathcal{O}_\mathcal{H} (\M)$ defined by
\[
\overline{X}_j f (x,u)=\lim_{t \to 0} \frac{ f( e^{tX_j}(x), u)-f(x,u)}{t},
\]
where $e^{tX_j}(x)$ is the exponential map on $M$.
\begin{lemma}\label{l.4.2}
Let  $x_0 \in \M$ and  $(x,u) \in \mathcal{O}_\mathcal{H} (\M)$, then
\begin{align*}
& A_i(x,u)=\sum_{j=1}^n e_i^j \overline{X}_j
\\
& -\sum_{j,k,l,r=1}^n e_i^j e_r^l \langle D_{X_j} X_l, X_k\rangle \frac{\partial}{\partial e_r^k}-\sum_{j=1}^n\sum_{k,l,r=1}^m e_i^j f_r^l \langle D_{X_j} Z_l, Z_k\rangle \frac{\partial}{\partial f_r^k}.
\end{align*}
 In particular, at $x_0$ we have
\[
A_i(x_0,u)=\sum_{j=1}^n e_i^j \overline{X}_j.
\]
\end{lemma}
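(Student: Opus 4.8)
The strategy is to compute the fundamental vector field $A_i$ on $\mathcal{O}_\mathcal{H}(\M)$ explicitly in the local coordinate system on $\mathcal{O}_\mathcal{H}(\M)$ provided by the normal frame. Recall that $A_i(x,u)$ is, by Notation~\ref{HVframe}, the $D$-lift to $\mathcal{O}(\M)$ of the vector $u(e_i) \in T_x\M$; since $u$ is horizontal and $D_\mathcal{H} = \nabla_\mathcal{H}$, this lift stays in $\mathcal{O}_\mathcal{H}(\M)$, and we may work in the submanifold $\mathcal{O}_\mathcal{H}(\M)$ rather than the full frame bundle. First I would set up local coordinates on $\mathcal{O}_\mathcal{H}(\M)$ near $(x_0, u_0)$: using the normal frame $\{X_1,\dots,X_n,Z_1,\dots,Z_m\}$ from Lemma~\ref{frame}, any horizontal isometry near $(x_0,u_0)$ is described by the base point $x$ together with an orthogonal matrix $\{e_i^j\}$ (acting on the horizontal part) and an orthogonal matrix $\{f_i^j\}$ (acting on the vertical part), via $u(e_i) = \sum_j e_i^j X_j$, $u(f_i) = \sum_j f_i^j Z_j$. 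These $(x, e_i^j, f_i^j)$ are the coordinates, and $\overline{X}_j$, $\partial/\partial e_r^k$, $\partial/\partial f_r^k$ are the coordinate vector fields.

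The key computation is to identify the vertical component of the $D$-lift. By definition of a connection on the frame bundle, the lift of $w \in T_x\M$ at $u$ is the unique tangent vector in $T_u\mathcal{O}_\mathcal{H}(\M)$ that projects to $w$ and is horizontal for the connection $D$, i.e. annihilates the connection form $\omega^D$. Write the candidate as $\sum_j e_i^j \overline{X}_j$ plus a correction in the fiber directions $\partial/\partial e_r^k$, $\partial/\partial f_r^k$. Moving along the integral curve of $\overline{X}_j$ parallel-transports the frame vectors with respect to $D$ only infinitesimally; the difference between the actual frame along such a curve and the $D$-parallel frame is governed by the Christoffel symbols of $D$. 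Concretely, the condition that the lift kill $\omega^D$ forces the coefficient of $\partial/\partial e_r^k$ to be $-\sum_{j,l} e_i^j e_r^l \langle D_{X_j} X_l, X_k\rangle$ (this is the standard formula expressing that, to first order, the frame vector $u(e_r) = \sum_l e_r^l X_l$ must be corrected by $-\sum_l e_r^l D_{X_j} X_l$ projected back onto the frame), and similarly the coefficient of $\partial/\partial f_r^k$ is $-\sum_{j,l} e_i^j f_r^l \langle D_{X_j} Z_l, Z_k\rangle$. Here the compatibility of $D$ with the splitting $T\M = \mathcal{H} \oplus \mathcal{V}$ (Assumption~\ref{property D}) is what guarantees there is no ``mixed'' correction term coupling the $e$-coordinates to the $f$-coordinates, so the two fiber directions decouple cleanly.

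For the statement at $x_0$, one simply invokes Lemma~\ref{frame}: at $x_0$ the structure functions $\omega_{ij}^k$ and $\beta_{ij}^k$ vanish, so by the explicit Christoffel symbol formulas for the Bott connection quoted just before the lemma (and $D_\mathcal{H} = \nabla_\mathcal{H}$, together with $D_{X_i} Z_j = \sum_k \beta_{ij}^k Z_k$), all the inner products $\langle D_{X_j} X_l, X_k \rangle$ and $\langle D_{X_j} Z_l, Z_k \rangle$ vanish at $x_0$. Hence both correction sums disappear and $A_i(x_0, u) = \sum_{j=1}^n e_i^j \overline{X}_j$.

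\textbf{Main obstacle.} The routine-but-delicate part is verifying that the claimed fiber-correction terms are exactly the ones that make the resulting vector field horizontal with respect to $D$ (equivalently, that it is genuinely the $D$-lift and not merely a vector field projecting correctly). This requires being careful about conventions: the relation between the connection form $\omega^D$ in the frame bundle, the Christoffel symbols $\langle D_{X_j} X_l, X_k\rangle$, and the derivative of the matrix entries $e_i^j$ along $\overline{X}_j$. One must also check that $\overline{X}_j$ (defined via the Riemannian exponential map) differs from the true coordinate derivative along $D$-geodesics only by terms that are themselves already captured in the fiber corrections, or else absorb this discrepancy — but since the formula is ultimately characterized by the two defining properties (correct projection, $\omega^D = 0$), any such bookkeeping discrepancy is forced to cancel. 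I expect no conceptual difficulty beyond this careful sign-and-convention check, which is why the paper flags these properties as ``standard but require tedious computations.''
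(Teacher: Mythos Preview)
Your proposal is correct and follows essentially the same approach as the paper. The only cosmetic difference is phrasing: the paper picks a curve $x(t)$ with $x'(0)=u(e_i)$, lifts it to $x^*(t)=(x(t),u(t))$ in $\mathcal{O}_\mathcal{H}(\M)$, and reads off $A_i$ by differentiating $u_{kl}(t)=\langle u(t)(e_k),X_l\rangle$ and $v_{kl}(t)=\langle u(t)(f_k),Z_l\rangle$ at $t=0$ using the parallel-transport equations $D_{x'(t)}u(t)(e_k)=0$, $D_{x'(t)}u(t)(f_k)=0$; you instead characterize the lift by the vanishing of the connection form $\omega^D$, which is the dual description of the same condition and yields the identical correction terms.
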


\begin{proof}
Let $u: \mathbb{R}^{n+m} \to T_x \M$ be a horizontal isometry and $x(t)$ be a smooth curve in $\M$ such that $x(0)=x$ and $x'(0)=u(e_i)$. We denote by $x^*(t)=(x(t),u(t))$ the $D$-lift to $\mathcal{O} (\M)$ of $x(t)$ and by $x'_1(t),\cdots,x'_n(t)$ the components of $x'(t)$ in the horizontal frame $X_1,\cdots,X_n$. Since $D$ is adapted to the foliation $\mathcal{F}$, the curve $x^*(t)$ takes its values in the horizontal frame bundle $\mathcal{O}_\mathcal{H} (\M)$. By definition of $A_i$, one has
\[
A_i=\sum_{j=1}^n x_j'(0) \overline{X}_j +\sum_{k,l=1}^n  u_{kl}'(0)\frac{\partial}{\partial e_k^l}+\sum_{k,l=1}^m  v_{kl}'(0)\frac{\partial}{\partial f_k^l},
\]
where $u_{kl}(t)=\langle u(t)(e_k), X_l \rangle$ and $v_{kl}(t)=\langle u(t)(f_k), Z_l \rangle$. Since $ u(t)(e_k)$ and $u(t)(f_k)$ are parallel along $x(t)$, one has
\[
D_{x'(t)} u(t)(e_k)=0, \quad D_{x'(t)} u(t)(f_k)=0.
\]
At $t=0$, this yields the expected result.
\end{proof}
In particular, Lemma \ref{l.4.2} implies the following statement.

\begin{proposition}
Let $\pi: \mathcal{O} (\M) \to \M$ be the bundle projection map. For a smooth $f:\M \to \mathbb{R}$, and $(x,u) \in \mathcal{O}_\mathcal{H} (\M) $,
\[
\left( \sum_{i=1}^n A_i^2 \right)(f\circ \pi)(x,u)=Lf \circ \pi (x,u).
\]
\end{proposition}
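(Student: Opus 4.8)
The statement is the Eells–Elworthy–Malliavin-type identity that the horizontal Bochner Laplacian $\sum_{i=1}^n A_i^2$ on $\mathcal{O}_\mathcal{H}(\M)$ projects down to the horizontal Laplacian $L$ on $\M$. The natural approach is a local computation in the normal frame of Lemma \ref{frame} combined with the explicit expression for $A_i$ from Lemma \ref{l.4.2}. First I would fix $x_0 \in \M$, choose the normal frame $\{X_1,\dots,X_n,Z_1,\dots,Z_m\}$ around $x_0$, and note that because both sides of the claimed identity are independent of the choice of frame, it suffices to verify the equality at the base point $x_0$ for an arbitrary horizontal isometry $u$ over $x_0$. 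By Lemma \ref{l.4.2}, at $x_0$ we have the drastic simplification $A_i(x_0,u)=\sum_{j=1}^n e_i^j \overline{X}_j$, so the messy $\partial/\partial e_r^k$ and $\partial/\partial f_r^k$ terms vanish \emph{at the fiber over $x_0$}; the subtlety is that to compute $A_i^2$ at $x_0$ one must differentiate $A_i$ once, so those vanishing terms still contribute through their derivatives.

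The key computation is therefore: apply $A_i$ to $A_i(f\circ\pi)$ and evaluate at $x_0$. Write $A_i = \sum_j e_i^j \overline{X}_j + (\text{terms in }\partial/\partial e, \partial/\partial f)$; since $f\circ\pi$ does not depend on the frame variables, $A_i(f\circ\pi)(x,u) = \sum_j e_i^j (X_j f)(x)$ globally (not just at $x_0$). Applying $A_i$ again and using that at $x_0$ the connection coefficients $\langle D_{X_j}X_l,X_k\rangle$ and $\langle D_{X_j}Z_l,Z_k\rangle$ vanish by the normal-frame property $\omega_{ij}^k=0$, $\beta_{ij}^k=0$ of Lemma \ref{frame} together with the Christoffel-symbol formulas quoted just before Lemma \ref{l.4.2}, one finds that the frame-derivative terms acting on $\sum_j e_i^j(X_jf)$ drop out at $x_0$, leaving $A_i^2(f\circ\pi)(x_0,u) = \sum_{j,k} e_i^j e_i^k (X_jX_k f)(x_0) + \sum_{j} (\overline{X}_i e_i^j)(X_jf)(x_0)$, where the second piece encodes the variation of the frame along the geodesic flow. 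Summing over $i$ and using that $\{e_i^j\}$ is orthogonal, $\sum_i e_i^j e_i^k = \delta_{jk}$, collapses the first term to $\sum_j (X_j^2 f)(x_0)$.

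The remaining first-order term must be shown to equal the drift $X_0$ in the Hörmander form \eqref{Hormander form} of $L$. Here I would use that $\sum_i \overline{X}_i e_i^j$ at $x_0$ is governed by the parallel-transport equations $D_{x'(t)}u(t)(e_k)=0$ from the proof of Lemma \ref{l.4.2}: differentiating $u_{kl}(t)=\langle u(t)(e_k),X_l\rangle$ and using the vanishing Christoffel symbols at $x_0$ shows this term reproduces exactly the divergence-type drift appearing when one writes $L=\sum_i X_i^2 + X_0$ with $X_0 = \sum_{i}(\nabla_{X_i}X_i \text{ correction})$; more conceptually, one invokes that the horizontal Laplacian $L$ as defined via the Dirichlet form $\mathcal{E}_\mathcal{H}$ coincides with $\sum_i X_i^2 - \sum_i \nabla_{X_i}X_i$ evaluated with the Bott connection (the trace of the horizontal Hessian), which at $x_0$ in the normal frame is simply $\sum_i X_i^2$, and check both sides agree. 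The main obstacle is bookkeeping: carefully tracking which of the frame-variable terms in $A_i$ survive after a single differentiation at $x_0$, and matching the resulting first-order operator with the intrinsically-defined drift of $L$ rather than with an ad hoc local expression. A clean way to finish is to observe that both $\pi_*\big(\sum_i A_i^2\big)$ and $L$ are second-order operators with the same principal symbol (the horizontal cometric) and the same behavior on the normal-frame chart at every point, and since $x_0$ was arbitrary, they coincide as operators on $C^\infty(\M)$.
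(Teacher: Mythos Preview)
Your approach is exactly the paper's: work in the normal frame of Lemma~\ref{frame}, use Lemma~\ref{l.4.2} at $x_0$, and reduce to $\sum_j X_j^2 f(x_0)=Lf(x_0)$. However, you manufacture a difficulty that is not there. Recall the paper's definition
\[
\overline{X}_j g(x,u)=\lim_{t\to 0}\frac{g(e^{tX_j}(x),u)-g(x,u)}{t},
\]
so $\overline{X}_j$ moves $x$ along the flow of $X_j$ while holding the frame $u$ fixed. In particular $\overline{X}_k(e_i^j)=0$ identically, since the $e_i^j$ are coordinates on the fiber. Thus your ``second piece'' $\sum_j(\overline{X}_i e_i^j)(X_j f)(x_0)$ vanishes outright; there is no first-order term to match against the drift $X_0$. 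Your entire third paragraph is therefore unnecessary.

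Concretely: $A_i(f\circ\pi)(x,u)=\sum_j e_i^j(X_j f)(x)$ globally, and at $(x_0,u)$ Lemma~\ref{l.4.2} gives $A_i=\sum_k e_i^k\overline{X}_k$ (the frame-derivative part has coefficients $\langle D_{X_j}X_l,X_k\rangle$, $\langle D_{X_j}Z_l,Z_k\rangle$ which vanish at $x_0$). Applying this and using $\overline{X}_k(e_i^j)=0$ yields $A_i^2(f\circ\pi)(x_0,u)=\sum_{j,k}e_i^j e_i^k(X_kX_j f)(x_0)$; summing over $i$ with $\sum_i e_i^j e_i^k=\delta_{jk}$ gives $\sum_j X_j^2 f(x_0)$. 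Since $\nabla_{X_i}X_j=0$ at $x_0$ in the normal frame, this equals $Lf(x_0)$. That is the whole argument, and it is precisely the paper's (very short) proof. Your worry that ``those vanishing terms still contribute through their derivatives'' would be legitimate for $A_i^2$ acting on a general function of $(x,u)$, but not for $A_i^2(f\circ\pi)$: the first $A_i$ already kills the frame-derivative part because $f\circ\pi$ is independent of $u$, and the second $A_i$ at $x_0$ reduces to $\sum_k e_i^k\overline{X}_k$, which again does not see the frame coordinates.
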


\begin{proof}
It is enough to prove this identity at $x_0$. Using the fact that at $x_0$ we have $\langle D_{X_j} X_l, X_k\rangle =\langle D_{X_j} Z_l, Z_k\rangle=0$, we see that
\[
\sum_{i=1}^n A_i^2 =\sum_{j=1}^n \overline{X}_j^2.
\]
The conclusion follows.
\end{proof}
As a straightforward corollary, we can introduce the \emph{horizontal} Brownian motion as follows.

\begin{corollary}\label{projec}
Let $(\Omega, (\mathcal{F}_s)_{s \geqslant 0}, \mathbb{P})$ be a filtered probability space that satisfies the usual conditions and let $\left\{ B_s \right\}_{s \geqslant 0}$ be an adapted $\mathbb{R}^{n}$-valued Brownian motion on that space. Let $\left\{ U_s \right\}_{s \geqslant 0}$ be a solution to the Stratonovich stochastic differential equation

\begin{equation}\label{e.HorizBM}
dU_s =\sum_{i=1}^n A_i (U_s) \circ dB^i_s= A_{U_s} \circ dB_s, \quad U_0 \in \mathcal{O}_\mathcal{H} (\M),
\end{equation}
then $W_s=\pi(U_s)$ is a \emph{horizontal Brownian motion} on $\M$, that is, a Markov process with the generator $\frac{1}{2} L$. Here we used Notation \ref{HVframe} and identified the $\mathbb{R}^{n}$-valued Brownian motion $\left\{ B_s \right\}_{s \geqslant 0}$  with an $\mathbb{R}^{n+m}$-valued process $\left( B_{s}, 0 \right)$.
\end{corollary}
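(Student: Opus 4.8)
The plan is to assemble the statement from the preceding lemma and proposition, which have done essentially all the geometric work. First I would recall from Corollary~\ref{projec}'s hypotheses that $\{U_s\}$ solves \eqref{e.HorizBM}, a Stratonovich SDE on the compact manifold $\mathcal{O}_\mathcal{H}(\M)$ with smooth coefficients $A_i$ restricted to (and, by Lemma~\ref{l.4.2}, tangent to) the horizontal frame bundle $\mathcal{O}_\mathcal{H}(\M)$; compactness gives a unique non-explosive solution, so $W_s = \pi(U_s)$ is a well-defined continuous semimartingale on $\M$. The point to check is that $W_s$ is a Markov process with generator $\tfrac12 L$.

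The main step is a standard Itô-formula computation: for any $f \in C^\infty(\M)$, apply the Stratonovich chain rule to $f\circ\pi(U_s)$ along the SDE \eqref{e.HorizBM}. Since $\circ\, dB_s$ is Stratonovich, one gets
\[
d\bigl(f\circ\pi(U_s)\bigr) = \sum_{i=1}^n A_i(f\circ\pi)(U_s)\circ dB^i_s
= \sum_{i=1}^n A_i(f\circ\pi)(U_s)\, dB^i_s + \tfrac12\sum_{i=1}^n A_i^2(f\circ\pi)(U_s)\, ds,
\]
where the conversion from Stratonovich to Itô produces exactly the $\tfrac12\sum_i A_i^2$ correction because the quadratic covariation of $B^i$ and $B^j$ is $\delta_{ij}\,ds$. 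By the Proposition immediately preceding the corollary, $\sum_{i=1}^n A_i^2(f\circ\pi)(x,u) = Lf\circ\pi(x,u)$ on $\mathcal{O}_\mathcal{H}(\M)$, so the drift term is $\tfrac12 (Lf)\circ\pi(U_s)\,ds$. Hence
\[
f(W_s) - f(W_0) - \tfrac12\int_0^s (Lf)(W_r)\,dr = \sum_{i=1}^n\int_0^s A_i(f\circ\pi)(U_r)\,dB^i_r
\]
is a (local, hence by compactness genuine) martingale. This says precisely that $W_s$ solves the martingale problem for $\tfrac12 L$.

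To conclude that $W_s$ is the horizontal Brownian motion in the sense of Section~\ref{Dirichlet}, I would invoke well-posedness of this martingale problem: since $\M$ is compact and $\tfrac12 L$ is the generator of the essentially self-adjoint operator determined by the closable Dirichlet form $\mathcal{E}_\mathcal{H}$ (as recorded in Section~\ref{Dirichlet}, using \cite[Proposition 5.1]{BaudoinEMS2014}), the martingale problem for $\tfrac12 L$ has a unique solution, whose one-dimensional marginals are $P_s = e^{\frac{s}{2}L}$. Therefore $W_s = \pi(U_s)$ is a Markov process with semigroup $\{P_s\}$, i.e.\ the horizontal Brownian motion; non-explosion is automatic on the compact $\M$. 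The only genuine subtlety — and the step I expect to need the most care — is verifying that the solution $U_s$ of \eqref{e.HorizBM} indeed stays in $\mathcal{O}_\mathcal{H}(\M)$ so that the Proposition identifying $\sum_i A_i^2$ with $L$ applies along the whole path; this is precisely the content of Lemma~\ref{l.4.2}, which shows each $A_i$ is tangent to the horizontal frame bundle (the $D$-lift preserves horizontality because $D$ is adapted to the foliation), so starting from $U_0 \in \mathcal{O}_\mathcal{H}(\M)$ the solution remains there.
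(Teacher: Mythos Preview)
Your argument is correct and is exactly the standard expansion of what the paper intends: the paper does not write out a proof, calling it ``a straightforward corollary'' of the preceding Proposition identifying $\sum_{i=1}^n A_i^2(f\circ\pi)$ with $Lf\circ\pi$ on $\mathcal{O}_\mathcal{H}(\M)$. Your It\^o/Stratonovich computation together with the observation (via Lemma~\ref{l.4.2}) that the $A_i$ are tangent to $\mathcal{O}_\mathcal{H}(\M)$ is precisely the intended route.
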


\subsection{Horizontal semimartingales }

Let $(\Omega, (\mathcal{F}_{s})_{s \geqslant 0}, \mathbb{P})$ be a filtered probability space that satisfies the usual conditions.

\begin{definition}
An $\mathbb{R}^{n+m}$-valued $\mathcal{F}_{s}$-adapted continuous semimartingale $(W_{s})_{s \geqslant 0}$  is called \emph{horizontal} if for all $s \geqslant 0$
\[
\mathbb{P} \left(  W_s \in \mathbb{R}^n \times \left\{ 0 \right\} \right)=1.
\]
The space of horizontal semimartingales with $W_0=0$ will be denoted by $SW_{\mathcal{H}} (\mathbb{R}^{n+m})$.
\end{definition}

\begin{definition}
An $\mathbb{M}$-valued $\mathcal{F}_{s}$-adapted continuous semimartingale $\left\{  M_{s}  \right\}_{s \geqslant 0}$ is called \emph{horizontal} if for every vertical smooth  one-form $\theta$, and every $s \geqslant 0$ the Stratonovich stochastic line integral $\int_{M[0,s]} \theta =0$ almost surely.  The space of horizontal semimartingales such that $M_0=x_0$   will be denoted by $SW_{\mathcal{H}} (\M)$.
\end{definition}

\begin{remark}
We refer to \cite[Section 2.4, Definition 2.4.1]{HsuEltonBook} for the definition of Stratonovich stochastic line integrals.
\end{remark}
Then we have  the following result, whose proof is essentially identical to the proof of Lemma \ref{horizontal development map} and thus omitted for conciseness.

\begin{proposition}\label{Stochastic horizontal development}
As before $\pi$ is the bundle projection map $\mathcal{O}_\mathcal{H} (\M) \longrightarrow \M$.
\begin{enumerate}
\item Let $\left\{ W_s \right\}_{s\geqslant 0} \in SW_{\mathcal{H}} (\mathbb{R}^{n+m})$ and let $\left\{ U_s \right\}_{s \geqslant 0}$ be the solution to the Stratonovich stochastic differential equation
\[
dU_s =\sum_{i=1}^n A_i (U_s)  \circ dW^{i}_s=A_{U_s}\circ dW_s,\quad U_0 \in \mathcal{O}_\mathcal{H} (\M),
\]
then $M_s:=\pi(U_s)$ is a horizontal semimartingale on $\M$.
\item Let $\left\{ M_s \right\}_{s \geqslant 0} \in SW_{\mathcal{H}} (\M)$. Then there exists a unique $\left\{ W_s \right\}_{s\geqslant 0} \in SW_{\mathcal{H}} (\mathbb{R}^{n+m})$ such that if  $\left\{ U_s \right\}_{s \geqslant 0}$ is the solution to the Stratonovich stochastic differential equation
\[
dU_s =\sum_{i=1}^n A_i (U_s)  \circ dW^{i}_s=A_{U_s}\circ dW_s, \quad U_0\in \mathcal{O}_\mathcal{H} (\M),
\]
then $M_s=\pi(U_s)$ .
\end{enumerate}
\end{proposition}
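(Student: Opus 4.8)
The plan is to follow the deterministic argument of Lemma~\ref{horizontal development map} almost verbatim, replacing smooth line integrals by Stratonovich stochastic line integrals; no new idea is needed beyond the standard theory of stochastic development (cf.\ \cite[Section 2.3]{HsuEltonBook}) together with the observation — already used in the proof of Lemma~\ref{l.4.2} — that the fundamental vector fields $A_i, V_j$ are tangent to $\mathcal{O}_\mathcal{H}(\M)$ precisely because $D$ is adapted to the foliation, so that the development SDEs preserve $\mathcal{O}_\mathcal{H}(\M)$ whenever the initial frame lies there.

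For part~(1): since $W$ takes values in $\mathbb{R}^n\times\{0\}$, only the $A_i$ drive the equation, and the solution $\{U_s\}$ started at $U_0\in\mathcal{O}_\mathcal{H}(\M)$ stays in $\mathcal{O}_\mathcal{H}(\M)$. For any vertical smooth one-form $\theta$ one then has $\int_{M[0,s]}\theta=\sum_{i=1}^n\int_0^s\theta(U_re_i)\circ dW^i_r=0$ a.s., because $U_re_i\in\mathcal{H}_{M_r}$ while $\theta$ annihilates $\mathcal{H}$; hence $M=\pi(U)\in SW_\mathcal{H}(\M)$.

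For part~(2): first disregard the foliation and let $\{W_s\}$ be the ordinary stochastic anti-development of $M$ with respect to $D$, so that the full equation $dU_s=A_{U_s}\circ dW_s+V_{U_s}\circ dW_s$ solves to $\pi(U_s)=M_s$, with $W$ existing and unique by the standard theory; again $U_s\in\mathcal{O}_\mathcal{H}(\M)$ for all $s$. Using horizontality of $M$, for every vertical smooth one-form $\theta$ we get $0=\int_{M[0,s]}\theta=\sum_{i=1}^n\int_0^s\theta(U_re_i)\circ dW^i_r+\sum_{j=1}^m\int_0^s\theta(U_rf_j)\circ dW^{n+j}_r$, and the first sum vanishes as in part~(1), leaving $\sum_{j=1}^m\int_0^s\theta(U_rf_j)\circ dW^{n+j}_r=0$ a.s.\ for all vertical $\theta$. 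Reading this in the frame bundle, the $\mathcal{V}$-valued Stratonovich integral $\sum_{j=1}^m(U_rf_j)\circ dW^{n+j}_r$ vanishes; since $\{U_rf_j\}_{j=1}^m$ is a basis of $\mathcal{V}_{M_r}$, composing with the semimartingale-valued inverse coframe $Y_r^{-1}$, where $Y_r:=(U_rf_j)_j$, and using associativity of the Stratonovich integral yields $W^{n+j}_s=\int_0^s(Y_r^{-1})_{\cdot j}\circ d\big(\sum_k(U_rf_k)\circ dW^{n+k}_r\big)=0$ a.s., so $W\in SW_\mathcal{H}(\mathbb{R}^{n+m})$. Being horizontal, $W$ kills the $V$-terms, hence also solves $dU_s=A_{U_s}\circ dW_s$ and is the desired anti-development; uniqueness is immediate, since any horizontal $W'$ with this property automatically satisfies the full development equation and therefore coincides with $W$.

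The only step genuinely more delicate than in Lemma~\ref{horizontal development map} is the passage from $\sum_j(U_rf_j)\circ dW^{n+j}_r=0$ to $W^{n+j}\equiv0$: in the smooth case one divides by the invertible frame pointwise, whereas here one must invoke associativity of Stratonovich integration against the matrix-valued semimartingale $Y_r$. A minor bookkeeping point — which also underlies the frame computations above — is to confirm that both development SDEs preserve $\mathcal{O}_\mathcal{H}(\M)$, which follows at once from Assumption~\ref{property D}.
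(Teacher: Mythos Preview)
Your proposal is correct and follows exactly the approach the paper indicates: the paper omits the proof entirely, stating only that it ``is essentially identical to the proof of Lemma~\ref{horizontal development map},'' and your argument is precisely the stochastic transcription of that lemma's proof. The two extra points you flag --- that the SDE preserves $\mathcal{O}_\mathcal{H}(\M)$ by Assumption~\ref{property D}, and that the passage from $\sum_j (U_r f_j)\circ dW^{n+j}_r=0$ to $W^{n+j}\equiv 0$ requires associativity of the Stratonovich integral rather than pointwise division --- are genuine refinements over the deterministic case that the paper leaves implicit.
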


Here we used Notation \ref{HVframe}, where we introduced how fundamental vector fields $A$ and $V$  on $\mathcal{O} (\M)$ in Notation \ref{HVframe} act on vectors in $\mathbb{R}^{n+m}$. Note that  $A$ acts on $\mathbb{R}^{n} \times \left\{ 0 \right\}$ in $\mathbb{R}^{n+m}$, and so we can apply it to $\omega^\mathcal{H}_s$.

Proposition \ref{Stochastic horizontal development} allows us to introduce the following notion.

\begin{definition}\label{d.HorizStochItoMaps} Suppose $\left\{ W_s \right\}_{s \geqslant 0}$ and $\left\{ M_s \right\}_{s \geqslant 0}$ are as in Proposition \ref{Stochastic horizontal development}. Then

(1) $\left\{ M_s \right\}_{s \geqslant 0}$ is called the \emph{stochastic  horizontal development} of $\left\{ W_s \right\}_{s \geqslant 0}$, and we denote $\phi_\mathcal{H} (W):=M$.

(2)  The path $\left\{ W_s \right\}_{s \geqslant 0}$ is called the \emph{stochastic  horizontal anti-development} of $\left\{ M_s \right\}_{s \geqslant 0}$, and we denote $\phi_{\mathcal{H}}^{-1} (M):=W$.

\end{definition}

As a consequence, one deduces that the horizontal Brownian motion constructed in Corollary \ref{projec}  is a horizontal semimartingale.

\begin{definition}\label{horizontal Ito map}
The \emph{horizontal It\^o map} (or \emph{horizontal stochastic development map}) is the following adapted map defined $\mu_\mathcal{H}$-a.s.
\begin{align*}
\phi_\Ho: & W_{0}\left( \mathbb{R}^n \right) \longrightarrow W_{x_0}\left( \M \right),
\\
& \omega^\mathcal{H} \longmapsto W
\end{align*}
\end{definition}
By using Proposition \ref{Stochastic horizontal development} and arguing as in \cite[p. 433]{Hsu1995b}, one can construct an adapted map $\phi_\Ho^{-1}: W_{x_0}\left( \M \right) \to W_{0}\left( \mathbb{R}^n \right)$ defined $\mu_W$-a.s. We will call $\phi_\Ho^{-1}$ the\emph{ stochastic horizontal  anti-development map}.

We also refer to \cite[Definition 2.5]{Driver1999b} for a discussion of the It\^o  map in the Riemannian setting and to the previous section for explicit constructions in our setting.
\begin{remark}
If one uses a Dirichlet form to construct the horizontal Brownian motion as in Section \ref{Dirichlet}, then it does not straightforward  to prove that one obtains a semimartingale. In particular, a standard approach such as the proof of \cite[Theorem 3.2.1]{HsuEltonBook}  does not readily extend to our setting.
\end{remark}

\subsection{Quasi-invariance of the horizontal Wiener measure}\label{s.QI}

In this section we prove quasi-invariance of the law of the horizontal Brownian motion with respect to variations generated by suitable \textit{tangent processes}.  Our argument follows  relatively closely the one by B.~Driver \cite{Driver1992b} and then E.~Hsu in \cite{Hsu1995b} (see also \cite{Cruzeiro1983b, CruzeiroMalliavin1996, EnchevStroock1995a}). More precisely, we will describe two types of variation of the  horizontal Brownian motion paths with respect to which the horizontal Wiener measure is quasi-invariant. The first one is largely inspired by Driver \cite[Theorem 7.28]{Driver2004a}. It is explicit, see Equation \eqref{StochVariationTangent} and readily yields the integration by parts formula in Section \ref{IBP from quasi}, but does not induce a flow. The second type of variation induces a flow and yields the  sub-Riemannian analogue of  \cite[Theorem 4.1]{Hsu1995b}.

\subsubsection{Framework}
We will use the same framework and notation as before. In particular, we still consider an arbitrary connection $D$ on $\M$  that satisfies the properties in Assumption \ref{property D}.
In addition we now introduce notation needed to establish quasi-invariance. We will mainly follow the presentation in \cite{Driver1999b, Driver2004a, Hsu1995b}.

We work in the probability space $(W_{0}\left( \mathbb{R}^n \right), \mathcal{B}, \mu_\mathcal{H})$, where $W_{0}\left( \mathbb{R}^n \right)$ is the space of continuous functions $\omega^\mathcal{H} : [0,1] \to \mathbb{R}^{n}$ such that $\omega^\mathcal{H}(0)=0$, $\mathcal{B}$ is the Borel $\sigma$-field on the path space $W_{0}\left( \mathbb{R}^n \right)$, and $\mu_\mathcal{H}$ is the Wiener measure. The coordinate process $(\omega^\mathcal{H}_s)_{0 \leqslant s \leqslant 1}$ is therefore a Brownian motion in $\mathbb{R}^{n}$. The usual completion of the natural filtration generated by $\left\{ \omega^\mathcal{H}_s \right\}_{0 \leqslant s \leqslant 1}$ will be denoted by $\mathcal{B}_s$.

We use the subscripts or superscripts $\mathcal{H}$, because, as before, $\mathbb{R}^{n}$ is identified with the subspace $\mathbb{R}^n\times \{ 0 \} \subset \mathbb{R}^{n+m}$. The $\mathbb{R}^{n+m}$-valued process $\left( \omega^\mathcal{H}_s,0 \right)$ will be referred to as a \emph{horizontal Brownian motion}.  The process  $\left\{ W_t \right\}_{0 \leqslant s \leqslant 1}$ constructed using Corollary \ref{projec} is the horizontal Brownian motion and the law $\mu_W$ of the horizontal Brownian motion on $\M$ will be referred to as the \emph{horizontal Wiener measure} on $\M$. Therefore, $\mu_W$ is a probability measure on the space $W_{x_0}\left( \M \right)$ of continuous paths $w:[0,1] \to \M$ with $w(0)=x_0$.

\begin{remark}
If the horizontal Laplacian can be written in  H\"ormander's form globally as in \ref{Hormander form}, then by \cite[Corollary 5.4]{ThalmaierCIRMLectures2016} the support of the horizontal Wiener measure $\mu_W$ is $W_{x_0}\left( \M \right)$ itself.
\end{remark}

\subsubsection{Tangent processes to the horizontal Brownian motion}

We now introduce the relevant class of tangent processes to the horizontal Brownian motion. To prove quasi-invariance, we consider the following class of tangent processes.

\begin{definition}
We define the \emph{horizontal Cameron-Martin space} denoted by $\mathcal{CM}_{\mathcal{H}} (\mathbb{R}^{n+m})$ as the space of  absolutely continuous  $\mathbb{R}^n$-valued (deterministic) functions $\left\{ h(s) \right\}_{0\leqslant s \leqslant 1}$ such that $h(0)=0$ and

\[
\int_0^1 \vert h^{\prime}(s) \vert_{\mathbb{R}^{n}}^2 ds  <\infty.
\]
\end{definition}

\begin{definition}\label{admissible2}
Suppose $\left\{ v(s)\right\}_{0 \leqslant s \leqslant 1}$ is a $\mathcal{B}_{s}$-adapted $\mathbb{R}^{n+m}$-valued continuous semimartingale such that

\begin{align}\label{e.3.8}
v(0)=0 \text{ and } \mathbb{E} \left( \int_0^1 \vert v(s) \vert_{\mathbb{R}^{n+m}}^2 ds \right)<\infty.
\end{align}
 The semimartingale $\left\{ v(s)\right\}_{0 \leqslant s \leqslant 1}$ will be called a \emph{tangent process} to the horizontal Brownian motion if the process
 \[
 v(s)-\int_0^s T_{U_r} (  A \circ d\omega^\mathcal{H}_r,  Av(r))
 \]
is a horizontal Cameron-Martin path, where $T$ denotes the torsion form of the Bott connection (not $D$). The space of  tangent processes to the horizontal Brownian motion will be denoted by $T W_\mathcal{H}(\mathbb{M})$.
\end{definition}

\begin{remark}
In Definition \ref{admissible2} we used the torsion $T$ of the Bott connection. Observe that since $T$ is a vertical tensor, a $\mathcal{B}_{s}$-adapted $\mathbb{R}^{n+m}$-valued continuous semimartingale $\left\{ v(s)\right\}_{0 \leqslant s \leqslant 1}$ satisfying \eqref{e.3.8} is in $T W_\mathcal{H}(\mathbb{M})$ if and only if
\begin{enumerate}
\item The horizontal part $v_\mathcal{H}$ is in  $\mathcal{CM}_{\mathcal{H}} (\mathbb{R}^{n+m})$;
\item The vertical part $v_\mathcal{V}$ is given by
\[
v_\mathcal{V} (s)=\int_0^s T_{U_r} (  A \circ d\omega^\mathcal{H}_r,  Av_\mathcal{H} (r)).
\]
\end{enumerate}
As a consequence, for any $h \in \mathcal{CM}_{\mathcal{H}} (\mathbb{R}^{n+m})$,
\begin{align}\label{ definition tau}
\tau^h(\omega^\mathcal{H})_s=h(s)+\int_0^s T_{U_r} (  A \circ d\omega^\mathcal{H}_r,  Ah (r))
\end{align}
is a tangent process to the horizontal Brownian motion.
\end{remark}
\begin{notation}
If $v \in T W_\mathcal{H}(\mathbb{M})$ is a tangent process, we  denote
\begin{align*}
 p_v(\omega^\mathcal{H})_s & := v(s)-\int_0^s T^D_{U_r} (  A \circ d\omega^\mathcal{H}_r,  Av(r)+Vv(r)) \\
 & -\int_0^s \left(  \int_0^r \Omega^D_{U_\tau}( A \circ  d \omega^\mathcal{H}_\tau, A v(\tau)+V v (\tau)) \right)  \circ  d \omega^\mathcal{H}_r,
\end{align*}
where $\Omega^D$ is the curvature form of the connection $D$.
\end{notation}
This definition comes from Equation \eqref{pullback1}, where $d\omega^\mathcal{H}$ is replaced by the Stratonovich differential $\circ d\omega^\mathcal{H}$. Since $D$ is a horizontal metric connection, the stochastic integral $ \int_0^s \Omega^D_{U_\tau}( A \circ  d \omega^\mathcal{H}_\tau, A v(\tau)+V v (\tau))$ restricts to $\mathbb{R}^n$ as a skew-symmetric endomorphism of $\mathbb{R}^n$. Also, from the proof of Theorem \ref{tangent deterministic} we have
\begin{align*}
& \int_0^s T^D_{U_r} (  A \circ d\omega^\mathcal{H}_r,  Av(r)+Vv(r))
\\
& = \int_0^s T_{U_r} (  A \circ d\omega^\mathcal{H}_r,  Av(r))- \int_0^s J_{V v(r)} ( A \circ d\omega^\mathcal{H}_r )_{U_r},
\end{align*}
where $J=D-\nabla$. As a consequence, $p_v(\omega^\mathcal{H})_s$ is actually a horizontal process, that is, it is $\mathbb{R}^n$-valued.

We can rewrite $p_v(\omega^\mathcal{H})_s$ by using  It\^o's integral, and we obtain
\begin{align*}
p_v(\omega^\mathcal{H})_s & =  v_\mathcal{H} (s)+ \frac{1}{2}\int_0^s \left( \mathfrak{Ric}^D_{\mathcal{H}}  \right)_{U_r} (Av(r)+Vv(r)) dr
\\
& + \int_0^s J_{V v(r)} ( A \circ d\omega^\mathcal{H}_r )_{U_r}
\\
&-\int_0^s \left(  \int_0^r \Omega^D_{U_\tau}( A \circ  d \omega^\mathcal{H}_\tau, A v(\tau)+V v (\tau)) \right)    d \omega^\mathcal{H}_r,
\end{align*}
where $\mathfrak{Ric}^D_{\mathcal{H}} $ is the horizontal Ricci curvature of the connection $D$.
We can further simplify this expression as follows.
\begin{align}\label{trace J}
 & J_{V v(s)} ( A \circ d\omega^\mathcal{H}_s )_{U_s} = \notag
 \\
 & \sum_{i=1}^n J_{V v(s)} ( A_i)_{U_s} \circ d\omega^i_s = \notag
 \\
 & J_{V v(s)} ( A\left( d\omega^\mathcal{H}_s \right) )_{U_s} +\frac{1}{2} \sum_{i=1}^n A_i J_{V v(s)} ( A_i)_{U_s}  ds \notag
 \\
 & -\frac{1}{2} \sum_{i=1}^n J_{T(A_i,Av_\mathcal{H} (s))} (A_i)_{U_s}ds
\end{align}
As a result, we see that

\begin{align*}
& p_v(\omega^\mathcal{H})_s  = v_\mathcal{H} (s)+\frac{1}{2} \sum_{i=1}^n \int_0^s A_i J_{V v(r)} ( A_i)_{U_r}  dr
\\
& -\frac{1}{2} \sum_{i=1}^n \int_0^s J_{T(A_i,Av_\mathcal{H} (r))} (A_i)_{U_r}dr
+ \frac{1}{2}\int_0^s \left( \mathfrak{Ric}^D_{\mathcal{H}}  \right)_{U_r} (Av(r)+Vv(r)) dr \\
&
+\int_0^s J_{V v(r)} ( A\left( d\omega^\mathcal{H}_r \right) )_{U_r} -\int_0^s \left(  \int_0^r \Omega^D_{U_\tau}( A \circ  d \omega^\mathcal{H}_\tau, A v(\tau)+V v (\tau)) \right)    d \omega^\mathcal{H}_r.
\end{align*}

More concisely, one can thus write
\begin{align}\label{component}
p_v(\omega^\mathcal{H})_s  =\int_0^s q_v(\omega^\mathcal{H})_r d\omega^\mathcal{H}_r + \int_0^s r_v(\omega^\mathcal{H})_r dr,
\end{align}
where $q_v$ is a $\mathfrak{so}(n)$-valued adapted process and $ r_v$ is an $\mathbb{R}^n$-valued adapted process such that  $\int_0^s |r_v(u)|^2_{\mathbb{R}^n} du < \infty$ a.s. The process $p_v$ is therefore an adapted vector field on $W_{0}\left( \mathbb{R}^n \right)$ in the sense of  \cite[Definition 3.2]{Driver1999b}.

\subsubsection{First type of variation}

We are now ready to construct the first relevant variation of the horizontal Brownian motion paths. The idea is to use the formula for the deterministic variation given by \ref{variation tangent} to infer a formula for a convenient stochastic variation.

\begin{notation}
For any $h \in \mathcal{CM}_{\mathcal{H}} (\mathbb{R}^{n+m})$ and any $t \in \R$, we denote by  $\rho_t^h: W_{0}\left( \mathbb{R}^{n} \right) \to W_{0}\left( \mathbb{R}^{n} \right)$ a map which is defined $\mu_\mathcal{H}$-a.s. as follows
\begin{align}\label{StochVariationTangent}
(\rho_t^h \omega_\mathcal{H})_s=\int_0^s e^{t q_{\tau_h (\omega^\mathcal{H})}(\omega^\mathcal{H})_u} d\omega^\mathcal{H}_u+t \int_0^s r_{\tau_h (\omega^\mathcal{H})}(\omega^\mathcal{H})_u du.
\end{align}
\end{notation}

\begin{remark}
As in the deterministic case, observe that $\rho^h$ is \textbf{not}
 the flow generated by $p_{\tau_h}$ on $W_{0}\left( \mathbb{R}^n \right)$. This variation  is similar to \cite[Theorem 7.28]{Driver2004a}. Let us however observe that $\mu_\mathcal{H}$ a.s., $\rho_0^h \omega^\mathcal{H}= \omega^\mathcal{H}$ and that from \eqref{component} one has
\[
\frac{d}{dt}\left|_{ t=0}  (\rho_t^h \omega^\mathcal{H})_s\right.=p_{\tau_h (\omega^\mathcal{H})}(\omega^\mathcal{H})_s.
\]
We also note that for every $t \in \R$, $e^{t q_{\tau_h }}$ is an $\mathbf{so}(n)$-valued process so that for every $t \in \R$  the semimartingale $\left\{ \int_0^s e^{t q_{\tau_h (\omega^\mathcal{H})}(\omega^\mathcal{H})_u} d\omega^\mathcal{H}_u\right\}_{s \in [0,1]}$ is a horizontal Brownian motion with respect to $\mu_\mathcal{H}$ .
\end{remark}

One has then the following analogue of \cite[Theorem 7.28]{Driver2004a} (see \cite{Driver1999b} for the details) which describes the differential of the horizontal stochastic development map and proves quasi-invariance of the horizontal Wiener measure for the variation described in \eqref{StochVariationTangent}.


\begin{thm}[Quasi-invariance I]\label{QI horizontal} Suppose $h \in \mathcal{CM}_{\mathcal{H}} (\mathbb{R}^{n+m})$.

(1) For every $t \in \R$ the law  of the semimartingale $\left\{ ( \rho_t^h \omega_\mathcal{H} )_s\right\}_{0 \leqslant s \leqslant 1}$  (under $\mu_\mathcal{H}$) is equivalent to $\mu_\mathcal{H}$, and the corresponding Radon-Nikodym density is given by
\begin{align*}
 & \frac{d(\rho_t^h)_{\ast} \mu_\mathcal{H}}{d \mu_\mathcal{H}}  (\omega^\mathcal{H})=
 \exp \left(t \int_0^{1}\left\langle   r_{\tau_h (\omega^\mathcal{H})}(\omega^\mathcal{H})_s, e^{t q_{\tau_h (\omega^\mathcal{H})}(\omega^\mathcal{H})_s} d\omega^\mathcal{H}_s \right\rangle
 \right.
 \\
 &
 \left.-\frac{t^2}{2} \int_0^{1} \vert r_{\tau_h (\omega^\mathcal{H})}(\omega^\mathcal{H})_s \vert_{\mathbb{R}^{n}}^2 ds\right).
\end{align*}

(2) For every $t \in \R$ the law  of the semimartingale $\left\{ \phi_\mathcal{H}(\rho_t^h \omega_\mathcal{H})_s\right\}_{0 \leqslant s \leqslant 1}$ (under $\mu_\mathcal{H}$) is equivalent to $\mu_W$  and the corresponding  Radon-Nikodym density is given by
\begin{align*}
 & \frac{d(\phi_\mathcal{H} \rho_t^h \phi_\mathcal{H}^{-1})_* \mu_W}{d \mu_W}  (w)
 =\exp \left(t \int_0^{1}\left\langle   r_{\tau_h (\omega^\mathcal{H})}(\omega^\mathcal{H})_s, e^{t q_{\tau_h (\omega^\mathcal{H})}(\omega^\mathcal{H})_s} d\omega^\mathcal{H}_s \right\rangle
 \right.
 \\
 &
 \left.
 -\frac{t^2}{2} \int_0^{1} \vert r_{\tau_h (\omega^\mathcal{H})}(\omega^\mathcal{H})_s \vert_{\mathbb{R}^{n}}^2 ds\right),
\end{align*}
where $\omega^\mathcal{H}=\phi_\mathcal{H}^{-1}(w)$.

(3) There exists a version of $\phi_\Ho ((\rho_t^h \omega_\mathcal{H}))_s$ which is continuous in $(s,t)$, differentiable in $t$, and such that
\[
\frac{d}{dt}\left|_{t=0} \phi_\Ho ((\rho_t^h \omega_\mathcal{H}))_s\right. =U_s\tau_h (\omega^\mathcal{H}) \hskip0.2in \mu_\mathcal{H}-a.s.
\]
\end{thm}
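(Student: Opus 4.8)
The plan is to follow the strategy of B.~Driver in \cite[Theorem 7.28]{Driver2004a} (see also \cite{Driver1999b}), using the explicit form \eqref{StochVariationTangent} of the variation $\rho_t^h$. First I would observe that, by construction, $\{ (\rho_t^h \omega_\mathcal{H})_s \}_{0 \le s \le 1}$ is the It\^o integral of an $\mathbf{so}(n)$-valued adapted process against $d\omega^\mathcal{H}$ plus a drift $t r_{\tau_h(\omega^\mathcal{H})}$. Since $e^{t q_{\tau_h(\omega^\mathcal{H})}(\omega^\mathcal{H})_u}$ is orthogonal, the martingale part $\int_0^s e^{t q_{\tau_h(\omega^\mathcal{H})}(\omega^\mathcal{H})_u} d\omega^\mathcal{H}_u$ has the same quadratic variation as $\omega^\mathcal{H}$ itself, so by L\'evy's characterization it is again an $\mathbb{R}^n$-valued Brownian motion under $\mu_\mathcal{H}$; call it $\widetilde{\omega}$. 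Thus $(\rho_t^h \omega_\mathcal{H})_s = \widetilde\omega_s + t\int_0^s r_{\tau_h(\omega^\mathcal{H})}(\omega^\mathcal{H})_u\, du$ is a Brownian motion plus an adapted drift. Part (1) then follows immediately from the Girsanov/Cameron-Martin-Maruyama theorem, once one checks the Novikov-type integrability needed to make the exponential a genuine martingale; this uses the square-integrability of $r_{\tau_h}$ (which holds since $v=\tau_h$ satisfies the conditions of Definition \ref{admissible2}) together with boundedness of the geometric coefficients coming from compactness of $\M$. The Radon-Nikodym density is precisely the Girsanov exponential written in terms of the drift $r_{\tau_h}$ and the Brownian motion $\widetilde\omega$, which is exactly the displayed formula.

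For part (2), I would use that $\phi_\mathcal{H}$ intertwines the Wiener measure $\mu_\mathcal{H}$ on $W_0(\mathbb{R}^n)$ with the horizontal Wiener measure $\mu_W$ on $W_{x_0}(\M)$, which is the content of Corollary \ref{projec} together with Definition \ref{d.HorizStochItoMaps}: the stochastic horizontal development of a Brownian motion is the horizontal Brownian motion, so $(\phi_\mathcal{H})_* \mu_\mathcal{H} = \mu_W$. Pushing forward the identity of part (1) through $\phi_\mathcal{H}$ and using $\phi_\mathcal{H}^{-1}\circ\phi_\mathcal{H} = \mathrm{id}$ ($\mu_\mathcal{H}$-a.s.) gives that $(\phi_\mathcal{H}\rho_t^h\phi_\mathcal{H}^{-1})_*\mu_W$ is equivalent to $\mu_W$ with the stated density, after substituting $\omega^\mathcal{H}=\phi_\mathcal{H}^{-1}(w)$; here one must be careful that the equivalence-class conventions (two processes being identified if equal $\mu_\mathcal{H}$-a.s.) are preserved under these compositions, as discussed in the introduction and in \cite{Hsu1995b, Driver1992b}.

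For part (3), I would first construct a jointly continuous version of $(s,t)\mapsto \phi_\mathcal{H}((\rho_t^h\omega_\mathcal{H}))_s$. Since $\rho_t^h\omega_\mathcal{H}$ depends smoothly (in an $L^2$-sense, with moment bounds uniform in $t$ on compacts) on $t$ through the matrix exponential $e^{tq_{\tau_h}}$ and the linear drift term $t r_{\tau_h}$, and since the horizontal development map $\phi_\mathcal{H}$ is the solution map of the SDE \eqref{e.HorizBM} on the compact manifold $\mathcal{O}_\mathcal{H}(\M)$, standard SDE-with-parameter results (Kolmogorov continuity plus $L^p$-estimates on flows of SDEs, as in \cite[Section 2]{Hsu1995b} or \cite{Driver2004a}) yield a version continuous in $(s,t)$ and differentiable in $t$. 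Differentiating the development SDE in $t$ at $t=0$, using $\left.\frac{d}{dt}\right|_{t=0}(\rho_t^h\omega_\mathcal{H})_s = p_{\tau_h(\omega^\mathcal{H})}(\omega^\mathcal{H})_s$ from \eqref{component} (the content of the remark preceding the theorem), and translating the variation of the anti-development back to the frame bundle via Proposition \ref{v2jk} (the stochastic analogue of the deterministic computation of Section \ref{s.HorizontVariations}), one gets $\left.\frac{d}{dt}\right|_{t=0}\phi_\mathcal{H}((\rho_t^h\omega_\mathcal{H}))_s = U_s \tau_h(\omega^\mathcal{H})$ $\mu_\mathcal{H}$-a.s. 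The main obstacle, as is typical in this circle of ideas, is the joint-regularity bookkeeping in part (3): justifying the interchange of $\frac{d}{dt}$ with the stochastic integrals defining $\phi_\mathcal{H}$, and producing the version that is simultaneously continuous in $(s,t)$ and $t$-differentiable, which requires careful uniform $L^p$ stochastic calculus estimates rather than a new idea; everything else reduces to Girsanov's theorem and the measure-preserving property of the horizontal It\^o map.
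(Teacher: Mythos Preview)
Your proposal is correct and follows essentially the same route as the paper: the paper's proof is extremely terse, simply invoking Girsanov's theorem (in the form of \cite[Lemma 8.2]{Driver1992b}) for part (1) and pointing to Proposition \ref{v2jk} together with \cite[Theorem 7.28]{Driver2004a} for the rest, which is exactly the strategy you spell out in detail. Your elaboration of the L\'evy-characterization step, the push-forward through $\phi_\mathcal{H}$, and the joint-regularity estimates for part (3) are precisely what one would supply to flesh out the paper's two-sentence proof.
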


\begin{proof}
The first part  follows from  Girsanov's theorem in the form of \cite[Lemma 8.2]{Driver1992b}. The second part follows from Proposition \ref{v2jk} and is similar to  \cite[Theorem 7.28]{Driver2004a}.
\end{proof}

\subsubsection{Second type of variation}

 We now turn to the discussion of the stochastic flow generated by $p_{\tau^h}$.

\begin{notation}\label{n.semimartingale} For a fixed $h \in \mathcal{CM}_{\mathcal{H}} (\mathbb{R}^{n+m})$ we denote by $S\M_\mathcal{H} (h)$ the space of continuous and $\mathcal{B}_{s}$-adapted $\mathbb{R}^{n}$-valued semimartingales $\left\{ z_{s} \right\}_{0 \leqslant s \leqslant 1}$ that can be written as
\[
z_s= \int_0^s a_r dr +\int_0^s \sigma_r d\omega^\mathcal{H}_r, \quad 0 \leqslant s \leqslant 1,
\]
where $a$ is an $\mathbb{R}^n$-valued  $\mathcal{B}_{s}$-adapted process such that there exists a deterministic constant $C$
\begin{align}\label{estimate SMH}
\vert a_s \vert_{\mathbb{R}^{n}} \leqslant C( 1+\vert h^{\prime} (s)\vert_{\mathbb{R}^{n}}),
\end{align}
and where $\sigma$ is a $\mathcal{B}_{s}$-adapted process taking values in the space of isometries of $\mathbb{R}^n$.

\end{notation}


Observe that by  Girsanov's theorem in the form of \cite[Lemma 8.2]{Driver1992b}, the law of $z \in S\M_\mathcal{H} (h)$ is equivalent to the law $\mu_\mathcal{H}$ of the horizontal Brownian motion.
We are now in position to prove that $p_{\tau^h}$ generates a flow on the horizontal path space for which the horizontal Wiener measure on $\mathbb{R}^{n+m}$ is quasi-invariant. The following statement is similar to \cite[Theorem 3.1]{Hsu1995b}. The proof of that theorem relied on the Picard iteration to find a solution in a space of $\mathbb{R}^{n+m}$-valued continuous semimartingales equipped with a suitable norm. In our setting the proof is almost identical, so we omit it for conciseness.

\begin{thm}\label{flow euclidean}
For any $h \in \mathcal{CM}_{\mathcal{H}} (\mathbb{R}^{n+m})$ there exists a unique family of semimartingales $\{ \nu_t^h, t \in \mathbb{R}\}$ such that
\begin{itemize}
\item $\nu_t^h \in S\M_\mathcal{H} (h)$ for all $t\in \R$ and $\nu_0^h \omega^\mathcal{H}=\omega^\mathcal{H}$, $\mu_\mathcal{H}$ a.s.; hence the law of $\nu_t^h$ is equivalent to $\mu_\mathcal{H}$;
\item For $\mu_\mathcal{H}$-almost every $\omega^\mathcal{H}$, the function $t \longmapsto \nu_t^h\omega^\mathcal{H}$ is a $W_{0}\left( \mathbb{R}^n \right)$-valued continuous function;
\item $\mu_\mathcal{H}$-almost surely, $\nu_{t_1}^h \circ \nu_{t_2}^h (\omega^\mathcal{H}) =\nu_{t_1+t_2}^h(\omega^\mathcal{H}), \quad \text{for every} ~(t_1,t_2)\in\mathbb R \times \mathbb R $;
\item There exists a continuous version of $\{p_{\tau^h \nu_t^h } (\nu_t^h),t\in\R\}$ such that $\mu_\mathcal{H}$-almost surely, $\{\nu_t^h,t\in\R\}$ satisfies the equation
\begin{align}\label{flow equation}
\nu_t^h(\omega^\mathcal{H})=\omega^\mathcal{H}+\int_0^t p_{\tau^h (\nu_s^h (\omega^\mathcal{H})) } (\nu_s^h (\omega^\mathcal{H}) )ds.
\end{align}
\end{itemize}
\end{thm}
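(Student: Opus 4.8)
The plan is to follow the Picard iteration scheme that E.~Hsu used for \cite[Theorem 3.1]{Hsu1995b}, adapted to the horizontal setting. First I would rewrite \eqref{flow equation} as an ordinary differential equation in the time parameter $t$ taking values in a Banach space of semimartingales. Abbreviating $P(z):=p_{\tau^h(z)}(z)$ for the adapted $\mathbb{R}^n$-valued vector field on $W_0(\mathbb{R}^n)$ attached to a horizontal semimartingale $z$ as in \eqref{component}, the equation becomes $\nu_t=\omega^\mathcal{H}+\int_0^t P(\nu_s)\,ds$. Fix $T_0>0$ and let $\mathcal{X}_{T_0}$ be the complete metric space of continuous families $(z^t)_{|t|\leqslant T_0}$ of elements of $S\M_{\mathcal{H}}(h)$, i.e. continuous $\mathcal{B}_s$-adapted $\mathbb{R}^n$-valued semimartingales $z^t_s=\int_0^s a^t_r\,dr+\int_0^s\sigma^t_r\,d\omega^\mathcal{H}_r$ with $\sigma^t$ valued in the isometries of $\mathbb{R}^n$ and $|a^t_s|\leqslant C(1+|h'(s)|)$, equipped with a norm of the form $\|(z^t)\|^2=\sup_{|t|\leqslant T_0}\big(\mathbb{E}\big[\sup_{s\leqslant 1}|z^t_s-\omega^\mathcal{H}_s|^2\big]+\mathbb{E}\big[\int_0^1|a^t_s|^2\,ds\big]\big)$; one then seeks a fixed point of the map $(z^t)\mapsto\big(\omega^\mathcal{H}+\int_0^t P(z^s)\,ds\big)_{|t|\leqslant T_0}$.

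The heart of the argument, and the step I expect to be the main obstacle, is to show that $z\mapsto P(z)$ is well defined and Lipschitz from $S\M_{\mathcal{H}}(h)$ into the space of semimartingales, with the drift of $P(z)$ obeying the growth bound \eqref{estimate SMH} and its martingale part an isometry of $\mathbb{R}^n$. These last two properties are exactly the structure \eqref{component}, namely $P(z)_s=\int_0^s q_{\tau^h(z)}(z)_r\,dz_r+\int_0^s r_{\tau^h(z)}(z)_r\,dr$ with $q$ valued in $\mathfrak{so}(n)$ and $|r|\leqslant C(1+|h'|)$. For the Lipschitz bound I would use that, $\M$ (hence $\mathcal{O}(\M)$) being compact, the torsion $T$, the curvature $\Omega^D$ and the horizontal Ricci tensor $\mathfrak{Ric}^D_{\mathcal{H}}$ of $D$, the difference tensor $J=D-\nabla$, and all their covariant derivatives, are uniformly bounded; that the development frame $U=U(z)\in\mathcal{O}_\mathcal{H}(\M)$ solving the horizontal development SDE of Proposition \ref{Stochastic horizontal development} driven by $z$ depends on $z$ in an $L^2$-Lipschitz fashion by the usual stability estimates for Stratonovich SDEs on a compact bundle; and that the single and iterated (double) stochastic integrals occurring in $\tau^h(z)$ and in $P(z)$ are controlled by Burkholder--Davis--Gundy. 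Combining these, $z\mapsto P(z)$ is Lipschitz with a constant depending only on the geometric bounds and on $\int_0^1|h'(s)|^2\,ds$. Crucially, this constant does not deteriorate along a solution, since by the Lévy characterization the martingale part of $\nu^h_s$ is again a horizontal Brownian motion, so $U(\nu^h_s)$ stays in the compact bundle $\mathcal{O}_\mathcal{H}(\M)$; this uniformity is what makes the extension to all $t$ possible.

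Given these estimates, for $T_0$ small enough the above map is a contraction on $\mathcal{X}_{T_0}$, so Picard iteration yields a unique local solution $\{\nu^h_t\}$; the uniformity of the contraction time lets one restart at $t=\pm T_0$ and concatenate, obtaining the solution for all $t\in\mathbb{R}$ with $\nu^h_t\in S\M_{\mathcal{H}}(h)$ and $\nu^h_0\omega^\mathcal{H}=\omega^\mathcal{H}$. The group property $\nu^h_{t_1}\circ\nu^h_{t_2}=\nu^h_{t_1+t_2}$ $\mu_\mathcal{H}$-a.s. then follows from uniqueness, since $s\mapsto\nu^h_s(\nu^h_{t_2}\omega^\mathcal{H})$ and $s\mapsto\nu^h_{s+t_2}\omega^\mathcal{H}$ both solve the flow equation with the same value at $s=0$. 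Joint continuity in $(s,t)$ of a version of $\nu^h_t(s)$, and hence of a version of $p_{\tau^h\nu^h_t}(\nu^h_t)$ for which \eqref{flow equation} holds pathwise, comes from Kolmogorov's continuity criterion applied to moment bounds of the form $\mathbb{E}|\nu^h_t(s)-\nu^h_{t'}(s')|^{2p}\leqslant C_p(|t-t'|^p+|s-s'|^p)$ obtained along the iteration. Finally, since $\nu^h_t\in S\M_{\mathcal{H}}(h)$ has drift bounded by $C(1+|h'|)$ and $h\in\mathcal{CM}_{\mathcal{H}}(\mathbb{R}^{n+m})$, Girsanov's theorem in the form of \cite[Lemma 8.2]{Driver1992b} shows that the law of $\nu^h_t$ under $\mu_\mathcal{H}$ is equivalent to $\mu_\mathcal{H}$, completing the proof.
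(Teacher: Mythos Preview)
Your proposal is correct and follows essentially the same approach as the paper: the paper itself omits the proof, stating only that it is almost identical to the Picard iteration argument of \cite[Theorem 3.1]{Hsu1995b} carried out in a suitable space of semimartingales with an appropriate norm. Your outline fills in precisely those details, with the expected use of compactness of $\mathcal{O}(\M)$ for the Lipschitz estimates, the structure \eqref{component} for membership in $S\M_\mathcal{H}(h)$, and Girsanov's theorem for the equivalence of laws.
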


\begin{remark}\label{r.3.43}
In the previous theorem, the word unique is understood in the sense of \cite[Proposition 3.3]{Hsu1995b}, that is, in the space $S\M_\mathcal{H} (h)$.
\end{remark}

We are now  in position to prove quasi-invariance properties for the horizontal Wiener measure with respect to a suitable flow. The following statement is similar to Theorem 4.1 in \cite{Hsu1995b}. We recall that the horizontal stochastic development $\phi_\mathcal{H}$ and its inverse $\phi_\mathcal{H}^{-1}$ are defined in Definition \ref{horizontal Ito map}.

\begin{thm}[Quasi-invariance II]\label{main}
Let $h \in \mathcal{CM}_{\mathcal{H}} (\mathbb{R}^{n+m})$. The flow $\zeta^h_t = \phi_\mathcal{H} \circ \nu_t^h \circ \phi_\mathcal{H}^{-1} : W_{x_0}\left( \mathbb{M} \right) \to W_{x_0}\left( \mathbb{M} \right)$, $t \in \mathbb{R}$, is defined $\mu_W$-a.s. with the  generator $U\tau^h \phi_\mathcal{H}^{-1}$, and for every $t \in \R$ the distribution  of $ \zeta^h_t$ under $\mu_W$ is equivalent to $\mu_W$. More precisely, there exists a family of measurable maps
\[
\zeta_t^h:  W_{x_0}\left( \mathbb{M} \right) \to W_{x_0}\left( \mathbb{M} \right),\quad t\in \mathbb R,
\]
with the following properties.
\begin{itemize}
\item For every fixed $t\in \mathbb R$, the law $\mu_{\zeta_t^h}$ of $\zeta_t^h$ is equivalent to the horizontal Wiener measure $\mu_W$ and the Radon-Nikodym derivative is given by
\[
\frac{d\mu_{\zeta_t^h}}{d\mu_X}(w)=\frac{d\mu_{ \nu_t^h}}{d\mu_\mathcal{H}}(\phi_\mathcal{H}^{-1} w), \quad w \in W_{x_0}\left( \mathbb{M} \right).
\]
\item For $\mu_W$-almost every $w \in W_{x_0}\left( \mathbb{M} \right)$, the function $t\mapsto \zeta_t^h w$ is a $W_{x_0}\left( \mathbb{M} \right)$-valued continuous differentiable function;
\item For $\mu_W$-almost every $w \in W_{x_0}\left( \mathbb{M} \right)$, there is a continuous version of $t\mapsto U_t \tau^h(\phi_\mathcal{H}^{-1}\zeta_t^h w)$ such that $\zeta_v^t w$ satisfies the differential equation
\[
\frac{d\zeta^h_t w}{dt}=U_t \tau^h(\phi_\mathcal{H}^{-1}\zeta_t^h w);
\]
\item  $\mu_W$-almost surely,
\[
\zeta^h_{t_1}\circ \zeta_{t_2}^{h}=\zeta^h_{t_1+t_2},\quad \text{for all} ~~(t_1,t_2)\in\mathbb R \times \mathbb R.
\]
\end{itemize}

\end{thm}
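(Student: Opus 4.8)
The plan is to obtain Theorem~\ref{main} from Theorem~\ref{flow euclidean} by transferring the flow $\{\nu_t^h\}$ on the Euclidean horizontal path space to $W_{x_0}(\M)$ through the horizontal stochastic development map, exactly along the lines of E.~Hsu's argument \cite[Theorem~4.1]{Hsu1995b} (see also \cite{Driver1999b, Driver1992b}). First I would record the measure‑theoretic compatibility that makes $\zeta^h_t$ meaningful: by Corollary~\ref{projec} and Definition~\ref{horizontal Ito map} the map $\phi_\mathcal{H}$ is defined $\mu_\mathcal{H}$-a.s., $\phi_\mathcal{H}^{-1}$ is defined $\mu_W$-a.s., one has $(\phi_\mathcal{H})_*\mu_\mathcal{H}=\mu_W$ and $(\phi_\mathcal{H}^{-1})_*\mu_W=\mu_\mathcal{H}$, with $\phi_\mathcal{H}\circ\phi_\mathcal{H}^{-1}=\mathrm{id}$ $\mu_W$-a.s.\ and $\phi_\mathcal{H}^{-1}\circ\phi_\mathcal{H}=\mathrm{id}$ $\mu_\mathcal{H}$-a.s. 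Since $\nu_t^h\in S\M_\mathcal{H}(h)$ for every $t$ (Theorem~\ref{flow euclidean}), Girsanov's theorem in the form of \cite[Lemma~8.2]{Driver1992b} shows the law of $\nu_t^h$ is equivalent to $\mu_\mathcal{H}$, so $\nu_t^h$ preserves $\mu_\mathcal{H}$-equivalence classes; hence $\zeta^h_t:=\phi_\mathcal{H}\circ\nu_t^h\circ\phi_\mathcal{H}^{-1}$ is a well-defined $\mu_W$-a.s.\ map $W_{x_0}(\M)\to W_{x_0}(\M)$, and the group property $\zeta^h_{t_1}\circ\zeta^h_{t_2}=\zeta^h_{t_1+t_2}$ ($\mu_W$-a.s.) follows directly from the corresponding property of $\nu_t^h$ together with $\phi_\mathcal{H}^{-1}\circ\phi_\mathcal{H}=\mathrm{id}$ $\mu_\mathcal{H}$-a.s.

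Next I would establish the quasi-invariance and the Radon–Nikodym identity by a change of variables. For bounded measurable $F$ on $W_{x_0}(\M)$, using $(\phi_\mathcal{H}^{-1})_*\mu_W=\mu_\mathcal{H}$ and then the Girsanov density for $\nu_t^h\in S\M_\mathcal{H}(h)$,
\[
\mathbb{E}_{\mu_W}\big[F(\zeta^h_t w)\big]=\mathbb{E}_{\mu_\mathcal{H}}\big[(F\circ\phi_\mathcal{H})(\nu_t^h\omega^\mathcal{H})\big]=\mathbb{E}_{\mu_\mathcal{H}}\Big[(F\circ\phi_\mathcal{H})(\omega^\mathcal{H})\,\tfrac{d(\nu_t^h)_*\mu_\mathcal{H}}{d\mu_\mathcal{H}}(\omega^\mathcal{H})\Big].
\]
Since $(\phi_\mathcal{H})_*\mu_\mathcal{H}=\mu_W$, the right-hand side equals $\mathbb{E}_{\mu_W}\big[F(w)\,\tfrac{d(\nu_t^h)_*\mu_\mathcal{H}}{d\mu_\mathcal{H}}(\phi_\mathcal{H}^{-1}w)\big]$, which gives $\mu_{\zeta_t^h}\sim\mu_W$ with
\[
\frac{d\mu_{\zeta_t^h}}{d\mu_W}(w)=\frac{d\mu_{\nu_t^h}}{d\mu_\mathcal{H}}(\phi_\mathcal{H}^{-1}w),
\]
the explicit density being the Girsanov exponential associated with the drift of $\nu_t^h$ (of the same type as in Theorem~\ref{QI horizontal}(1), with $\rho_t^h$ replaced by $\nu_t^h$). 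Here one uses that the martingale part of any element of $S\M_\mathcal{H}(h)$ has an isometry-valued integrand, so Girsanov applies with the required finiteness of the drift following from \eqref{estimate SMH}.

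It then remains to produce the continuity in $(s,t)$, the differentiability in $t$, and the ODE for $\zeta^h_t w$. From Theorem~\ref{flow euclidean}, $t\mapsto\nu_t^h\omega^\mathcal{H}$ is $W_0(\mathbb{R}^n)$-valued continuous and there is a continuous version of $\{p_{\tau^h(\nu_t^h\omega^\mathcal{H})}(\nu_t^h\omega^\mathcal{H})\}$ solving the flow equation \eqref{flow equation}; composing with $\phi_\mathcal{H}$, which depends continuously on its semimartingale argument on the relevant bounded sets, yields a version of $t\mapsto\zeta^h_t w$ continuous (jointly in $(s,t)$, as in \cite[Theorem~4.1]{Hsu1995b}). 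For the generator, the deterministic identity of Proposition~\ref{v2jk},
\[
\left.\frac{d}{dt}\right|_{t=0}\phi_\mathcal{H}\circ\nu_t^h\circ\phi_\mathcal{H}^{-1}(\gamma)_s=u_s(\gamma)\,\tau_h(\omega^\mathcal{H})_s,
\]
extends by Malliavin's \textit{principe de transfert} (Stratonovich integrals replacing Riemann–Stieltjes ones) to $\left.\tfrac{d}{dt}\right|_{t=0}\zeta^h_t w=U\tau^h(\phi_\mathcal{H}^{-1}w)$ $\mu_W$-a.s.; combined with the group property this gives $\tfrac{d}{dt}\zeta^h_t w=U_t\tau^h(\phi_\mathcal{H}^{-1}\zeta^h_t w)$ for $\mu_W$-a.e.\ $w$, with a continuous version of the right-hand side. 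I expect the last step to be the main obstacle: rigorously passing the $t$-derivative through the stochastic development map and exhibiting a version of $\zeta^h_t$ that is jointly continuous, differentiable in $t$, and solves the stated ODE. This requires the $S\M_\mathcal{H}(h)$-estimates behind the Picard iteration in Theorem~\ref{flow euclidean} together with a uniform-integrability argument to differentiate under the stochastic integral, precisely as in \cite[\S4]{Hsu1995b} and \cite{Driver1999b}. The torsion term in $\tau^h$ and the non-horizontality of the auxiliary connection $D$ have already been absorbed into the processes $p_v,q_v,r_v$ via Theorem~\ref{tangent deterministic} and \eqref{component}, so no geometric input beyond Section~\ref{s.HorizontCalculus} is needed at this stage; only the stochastic-analytic bookkeeping of \cite{Hsu1995b} has to be carried over.
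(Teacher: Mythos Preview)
Your proposal is correct and follows essentially the same route as the paper: the paper's proof simply states that the result follows from Theorem~\ref{flow euclidean} and refers to \cite[Theorem~4.1]{Hsu1995b} for the details, and you have spelled out precisely those details --- the measure-theoretic compatibility of $\phi_\mathcal{H}$ and $\phi_\mathcal{H}^{-1}$, the Girsanov change of variables for the Radon--Nikodym density, the transfer of the group property, and the passage to the ODE via Proposition~\ref{v2jk} together with the $S\M_\mathcal{H}(h)$-estimates underlying Theorem~\ref{flow euclidean}. Nothing in your outline departs from or adds to the paper's intended argument.
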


\begin{proof}
The result  follows from Theorem \ref{flow euclidean}. For details, we refer to the proof of  \cite[Theorem 4.1]{Hsu1995b}.
\end{proof}

\subsubsection{Towards the integration by parts formulas}\label{IBP from quasi}

It is well known that a quasi-invariance result yields an integration by parts formula on the path space of the underlying diffusion, see B.~Driver \cite{Driver1992b} and then E.~Hsu \cite{Hsu1995b} (see also \cite{Cruzeiro1983b, CruzeiroMalliavin1996, EnchevStroock1995a}). Integration by parts formulas will be studied in  more detail in the second part of the paper, so we only briefly comment on the immediate corollary of Theorem \ref{QI horizontal}, which will be proved in another way (see Lemma \ref{IPP3}) and then extended to cylinder functions. It is obtained from Theorem \ref{QI horizontal} by taking the Bott connection $\nabla$ as the connection $D$,  and following the arguments of the proof in \cite[Theorem 7.32]{Driver2004a}.

\begin{lem}\label{IPP3}
Let $h \in \mathcal{CM}_{\mathcal{H}} (\mathbb{R}^{n+m})$, then for $f \in C^\infty(\M)$,
\begin{align*}
& \mathbb{E} \left( \left\langle df(W_1), U_1 \tau_h (\omega^\mathcal{H}) \right\rangle \right)
\\
& = \mathbb{E}  \left( f(W_1) \int_0^1 \left\langle h^{\prime}(s)+ \frac{1}{2}  (\mathfrak{Ric}_{\mathcal{H}})_{U_s}    h(s), d\omega_s^\mathcal{H} \right\rangle_{\mathbb{R}^{n}} \right),
\end{align*}
where $\mathbb{E}$ is the expectation with respect to $\mu_\mathcal{H}$ and $\mathfrak{Ric}_{\mathcal{H}}$ is the horizontal Ricci curvature of the Bott connection (viewed as an operator on $\mathbb{R}^n$).
\end{lem}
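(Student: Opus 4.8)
The plan is to specialize Theorem \ref{QI horizontal} to the case $D=\nabla$ (the Bott connection) and then to differentiate the quasi-invariance identity at $t=0$, following the argument in \cite[Theorem 7.32]{Driver2004a}. With $D=\nabla$ the difference tensor $J=D-\nabla$ vanishes, $T^D=T$, $\Omega^D=\Omega$ and $\mathfrak{Ric}^D_{\mathcal{H}}=\mathfrak{Ric}_{\mathcal{H}}$, so the formula for $p_v$ recorded before \eqref{component} collapses: in the decomposition \eqref{component} the curvature double integral is the (skew-symmetric, $\mathfrak{so}(n)$-valued) martingale part $q_v$, while the finite-variation part has density $r_v(\omega^\mathcal{H})_s = v_\mathcal{H}'(s)+\frac{1}{2}(\mathfrak{Ric}_{\mathcal{H}})_{U_s}(Av(s)+Vv(s))$. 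Taking $v=\tau_h(\omega^\mathcal{H})$, whose horizontal part is $h$ (absolutely continuous by the definition of $\mathcal{CM}_{\mathcal{H}}(\mathbb{R}^{n+m})$) and noting that $\mathfrak{Ric}_{\mathcal{H}}$ acts only on horizontal directions (it is regarded in the lemma as an operator on $\mathbb{R}^n$), this gives
\[
r_{\tau_h}(\omega^\mathcal{H})_s = h'(s)+\tfrac{1}{2}(\mathfrak{Ric}_{\mathcal{H}})_{U_s}h(s).
\]

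Next I would set $\Phi(t):=\mathbb{E}\big(f(\phi_\mathcal{H}(\rho_t^h\omega^\mathcal{H})_1)\big)$, with $\mathbb{E}$ taken with respect to $\mu_\mathcal{H}$, and compute $\Phi'(0)$ in two ways. On the one hand, by Theorem \ref{QI horizontal}(3) the map $t\mapsto\phi_\mathcal{H}(\rho_t^h\omega^\mathcal{H})_1$ admits a version that is continuous in $(s,t)$ and differentiable in $t$, with $\frac{d}{dt}\big|_{t=0}\phi_\mathcal{H}(\rho_t^h\omega^\mathcal{H})_1 = U_1\tau_h(\omega^\mathcal{H})$; interchanging $\frac{d}{dt}$ with $\mathbb{E}$ then yields $\Phi'(0)=\mathbb{E}\big(\langle df(W_1),U_1\tau_h(\omega^\mathcal{H})\rangle\big)$, which is the left-hand side of the asserted identity. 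On the other hand, by the change-of-variables formula and the Radon--Nikodym density in Theorem \ref{QI horizontal}(1), $\Phi(t)=\mathbb{E}\big(f(W_1)\,Z_t\big)$, where $Z_t$ is the explicit exponential density of that theorem. Since $Z_0=1$, $e^{tq_{\tau_h}}\big|_{t=0}=\mathrm{Id}$, and the $t^2$-term contributes only at higher order, differentiating under the expectation gives $\Phi'(0)=\mathbb{E}\big(f(W_1)\int_0^1\langle r_{\tau_h}(\omega^\mathcal{H})_s,d\omega^\mathcal{H}_s\rangle\big)$.

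Equating the two expressions for $\Phi'(0)$ and substituting the formula for $r_{\tau_h}$ from the first step produces exactly the stated identity. The only substantive point is the justification of differentiating under the expectation sign in both computations, i.e.\ uniform integrability of the difference quotients; I expect this to be the main obstacle, but it is handled exactly as in Driver's proof of \cite[Theorem 7.32]{Driver2004a}, using the compactness of $\M$ (so that $f$ and $df$ are bounded), the almost sure continuity and $t$-differentiability statements of Theorem \ref{QI horizontal}, and the integrability built into the definitions of $\mathcal{CM}_{\mathcal{H}}(\mathbb{R}^{n+m})$ and of $\rho_t^h$ in \eqref{StochVariationTangent}.
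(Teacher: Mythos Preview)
Your proposal is correct and follows exactly the route indicated in the paper: the paper states that the lemma is obtained from Theorem~\ref{QI horizontal} by taking $D=\nabla$ (so that $J=0$ and $r_{\tau_h}(\omega^\mathcal{H})_s=h'(s)+\tfrac12(\mathfrak{Ric}_\mathcal{H})_{U_s}h(s)$) and then differentiating the quasi-invariance identity at $t=0$ as in \cite[Theorem~7.32]{Driver2004a}. Your two computations of $\Phi'(0)$ via parts (3) and (1) of Theorem~\ref{QI horizontal}, together with the Driver-style justification of the exchange of limit and expectation using compactness of $\M$, constitute precisely this argument.
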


\subsubsection{The case of a Riemannian submersion: examples}\label{Example QI}


To finish this part of the paper, we  discuss the case when  the foliation on $\M$ comes from a totally geodesic submersion $\pi : (\M,g) \to (\B,j)$ as described in Example \ref{ex.RiemSubmersion}. This should allow the reader to relate our quasi-invariance result  to the Riemannian result by  B.~Driver in \cite{Driver1992b}.

In the submersion case, the notion of horizontal lift of curves plays an important role.

\begin{definition}
Let $ \overline{\gamma}: [0, \infty) \to \mathbb{B}$ be a $C^1$-curve. Let $x \in \mathbb{M}$, such that $\pi(x)=\gamma(0)$. Then, there exists a unique $C^1$-horizontal curve $\gamma: [0, \infty) \to \mathbb M$ such that $\gamma (0)=x$ and $ \pi (\gamma(t))=\overline{ \gamma}(t)$. The curve $\gamma$ is called the horizontal lift of $\overline{\gamma}$ at $x$.
\end{definition}
The notion of horizontal lift may be extended to Brownian motion paths on $\mathbb{B}$ by using stochastic calculus. The argument is similar to the case of the  stochastic lift of the Brownian motion of a Riemannian manifold to the orthonormal frame bundle, see for instance \cite[Theorem 3.2]{Driver1992b}).

The submersion has totally geodesic fibers, therefore $\pi$ is harmonic and the projected process
\[
W_t^\B =\pi (W_t)
\]
is, under $\mu_\mathcal{H}$, a Riemannian Brownian motion on $\B$ started at $\pi(x_0)$. The submersion $\pi$ induces a map $W_{x_0}\left( \mathbb{M} \right) \to W_{\pi(x_0)}\left( \B \right)$ that we still denote by $\pi$. Let now $h$ be a Cameron-Martin path in $\mathbb{R}^n$ and consider the flow $\zeta^h_t  : W_{x_0}\left( \mathbb{M} \right) \to W_{x_0}\left( \mathbb{M} \right)$, $t \in \mathbb{R}$, which is defined $\mu_W$-a.s. according to Theorem \ref{main}. By using the   horizontal stochastic lift $W_{\pi(x_0)}\left( \B \right) \to W_x\left( \mathbb{M} \right)$, one can construct a flow $\tilde{\zeta}^h_t : W_{\pi(x_0)}\left( \B \right) \to W_{\pi(x_0)}\left( \B \right)$, $t \in \mathbb{R}$ which is unique  $\mu_{W^\B}$-a.s. as mentioned in Remark \ref{r.3.43}. Then we have the following commutative diagram


\begin{equation}\label{lk}
\begin{tikzcd}
W_{x_0}\left( \mathbb{M} \right) \arrow{r}{\zeta^h_t} \arrow[swap]{d}{\pi} & W_{x_0}\left( \mathbb{M} \right) \arrow{d}{\pi} \\
W_{\pi(x_0)}\left( \B \right) \arrow{r}{\tilde{\zeta}^h_t} & W_{\pi(x_0)}\left( \B \right)
\end{tikzcd}
\end{equation}

By Theorem \ref{main},  the law of $W^\B$ is quasi-invariant under the flow  $\tilde{\zeta}^h_t$. Note that the connection $D$ projects down to the Levi-Civita connection on $\B$, therefore the flow $\tilde{\zeta}^h_t$ provides a version of the flow considered by E.~Hsu in \cite[Theorem 4.1]{Hsu1995b}. Thus we recover Driver's quasi-invariance result \cite{Driver1992b} on the manifold $\B$. Further details on this example will be given in Section \ref{ss.RiemSubmersions}, where the generator $\tilde{\zeta}^h_t$ will be computed explicitly.

It may be useful to illustrate the diagram \eqref{lk}  in a very simple situation. Recall that the Heisenberg group is the set
\[
\mathbb{H}^{2n+1}=\left\{ (x, y, z), x \in \mathbb{R}^n,  y \in \mathbb{R}^n, z\in\mathbb{R} \right\}
\]
endowed with the group law
\[
(x_1,y_1,z_1) \star (x_2,y_2,z_2):=(x_1+x_2,y_1+y_2,z_1+z_2+\langle x_1,y_2 \rangle_{\R^n} -\langle x_2,y_1 \rangle_{\R^n}).
\]
The vector fields
\begin{align*}
& X_i=\frac{\partial}{\partial x_i} -y_i \frac{\partial}{\partial z},  1 \leqslant i \leqslant n,
\\
& Y_i=\frac{\partial}{\partial y_i} +x_i \frac{\partial}{\partial z},  1 \leqslant i \leqslant n,
\\
& Z=\frac{\partial}{\partial z}
\end{align*}
form a basis for the space of left-invariant vector fields on $\mathbb{H}^{2n+1}$. We choose a left-invariant Riemannian metric on $\mathbb{H}^{2n+1}$ in such a way that $\left\{ X_{1}, ..., X_{n}, Y_{1}, ..., Y_{n}, Z \right\}$ are orthonormal with respect to this metric. Note that these vector fields satisfy the following commutation relations
\[
[X_i,Y_j]=2\delta_{ij} Z, \hskip0.2in [X_i,Z]=[Y_i,Z]=0, \hskip0.2in i=1, ..., n.
\]
Then, the projection map

\begin{align*}
\begin{array}{llll}
\pi:  & \mathbb{H}^{2n+1} & \longrightarrow & \mathbb{R}^{2n}
\\
      & (x,y,z)  & \longmapsto & (x,y)
\end{array}
\end{align*}
is a Riemannian submersion with totally geodesic fibers. In that example, the Bott connection is trivial: $\nabla X_i=\nabla Y_j=\nabla Z=0$ and its torsion is given by
\[
T(X_i,Y_j)=-2\delta_{ij} Z, \quad T(X_i,Z)=T(Y_i,Z)=0.
\]

 Let  now $W_{0}\left( \mathbb{R}^{2n} \right)$ be the Wiener space of continuous functions $[0, 1]\to \mathbb{R}^{2n}$ starting at $0$. We denote by $(B_t,\beta_t)_{0 \leqslant t \leqslant 1}$ the coordinate maps on $W_{0}\left( \mathbb{R}^{2n} \right)$ and by $\mu_\mathcal{H}$ the Wiener measure on $W_{0}\left( \mathbb{R}^{2n} \right)$, so that $(B_t, \beta_t)_{0 \leqslant t \leqslant 1 }$ is a $2n$-dimensional Brownian motion under $\mu_\mathcal{H}$. By using the submersion $\pi$, the Brownian motion $(B_t,\beta_t)_{0 \leqslant t \leqslant 1}$ can be horizontally lifted to the horizontal Brownian motion on $\mathbb{H}^{2n+1}$ which is given explicitly by
 \[
 W_t=\left(B_t,\beta_t, \sum_{i=1}^n \int_0^t B^i_t d\beta^i_t -\beta^i_t dB^i_t \right).
 \]

 Let $h=(h_1,h_2)$ be a Cameron-Martin path in $\mathbb{R}^{2n}$ and consider the Cameron-Martin flow $\tilde{\zeta}^h_t  : W_{0}\left( \mathbb{R}^{2n} \right) \to W_{0}\left( \mathbb{R}^{2n} \right)$, $t \in \mathbb{R}$, explicitly given by
 \[
 \tilde{\zeta}^h_t (B,\beta)= (B,\beta) +t h.
 \]
 One has then a commutative diagram

\begin{equation}\label{diagram 2}
\begin{tikzcd}
W_{0}\left( \mathbb{H}^{2n+1} \right) \arrow{r}{\zeta^h_t} \arrow[swap]{d}{\pi} & W_{0}\left( \mathbb{H}^{2n+1} \right) \arrow{d}{\pi}
\\
W_{0}\left( \mathbb{R}^{2n} \right) \arrow{r}{\tilde{\zeta}^h_t} & W_{0}\left( \mathbb{R}^{2n} \right)
\end{tikzcd}
\end{equation}
where  $\zeta^h_t$ is the flow on $W_{0} \left( \mathbb{H}^{2n+1} \right)$ defined $\mu_\mathcal{H}$-a.~s.  by
\begin{align*}
  & \zeta^h_t \left(W \right)= \left(B+th_1,\beta+th_2, \right.
  \\
& \left. \sum_{i=1}^n \int_0^\cdot (B^i_u+th_1^i (u)) d(\beta^i_u +th_2^i(u))-(\beta^i_u +th_2^i(u)) d(B^i_u+th^i_1(u)) \right).
\end{align*}
One can compute the generator of this flow as
\begin{align*}
 & \frac{d}{dt}\left|_{t=0} \zeta^h_t \left( W \right)\right.
 \\
  = &\left(h_1,h_2, \sum_{i=1}^n \int_0^\cdot h_1^i (u) d\beta^i_u -h_2^i(u) dB^i_u + \sum_{i=1}^n \int_0^\cdot B_u^i dh^i_2 (u) -\beta^i_u dh^i_1(u) \right)
  \\
  =& \left(h_1,h_2,   \sum_{i=1}^n  B^i h^i_2 -\beta^i h^i_1+2\sum_{i=1}^n \int_0^\cdot h_1^i (u) d\beta^i_u -h_2^i(u) dB^i_u  \right)
  \\
  =&\sum_{i=1}^{n} h_1^i X_i(W) +\sum_{i=1}^n   h_2^iY_i(W) +2\left( \sum_{i=1}^n \int_0^\cdot h_1^i (u) d\beta^i_u -h_2^i(u) dB^i_u \right) Z (W)
\end{align*}

As expected, we can interpret this generator in terms of the Bott connection as a straightforward computation shows that
\begin{align*}
 &  \int_0^s T \left( \sum_{i=1}^n X_i \circ dB^i_u +\sum_{i=1}^n Y_i  \circ d \beta^i_u , \sum_{i=1}^n h^i _1 (u) X_i + \sum_{i=1}^n h^i_2(u) Y_i \right)\\
 =& \left( 2 \sum_{i=1}^n \int_0^s h^i _1 (u) d \beta^i_u -2 \sum_{i=1}^n  \int_0^s h^i_2(u) dB^i_u  \right) Z(W)
\end{align*}
Therefore, we showed that
\begin{align*}
& \frac{d}{dt}\left|_{t=0} \zeta^h_t \left( W \right) \right.
  \\
=&\sum_{i=1} h_1^i X_i(W) +\sum_{i=1}^n   h_2^iY_i(W)
\\
&+ \int_0^\cdot T \left( \sum_{i=1}^n X_i \circ dB^i_u +\sum_{i=1}^n Y_i  \circ d \beta^i_u , \sum_{i=1}^n h^i _1 (u) X_i + \sum_{i=1}^n h^i_2(u) Y_i \right)
\end{align*}
This is exactly Equation \eqref{ definition tau} written in the parallel frame $\{ X_i,  Y_j, Z \}_{i=1}^{n}$.

\section{Integration by parts formulas}\label{p.2}


The goal of this part of the paper is to establish several types of integration by parts formulas for the horizontal Brownian motion. This part relies on very different techniques than the ones used in the first part and therefore we  need to introduce more notation.  Though we will consider  the horizontal Brownian motion constructed from the frame bundle,  in this part of the paper   we will rely on the stochastic parallel transport rather than the stochastic lift to the frame bundle (although these are of course equivalent). Also, instead of working with general connections denoted by $D$ in Section \ref{p.1}, we now consider connections satisfying Assumption \ref{property D} and with the additional property that the torsion satisfies B.~Driver's anti-symmetry condition \cite[Definition 8.1]{Driver1992b}. Throughout this part, we will work with the following probability space.
\begin{notation}
By $(\Omega, \mathcal{B}, \mu_\mathcal{H})$ we denote the probability space, where  $\Omega:=W_{0}\left( \mathbb{R}^n \right)$ is the space of continuous functions $\omega^\mathcal{H} : [0,1] \to \mathbb{R}^{n}$ such that $\omega^\mathcal{H}(0)=0$, $\mathcal{B}$ is the Borel $\sigma$-field on $W_{0}\left( \mathbb{R}^n \right)$, and $\mu_\mathcal{H}$ is the Wiener measure on $\Omega$. Then the coordinate process $\left\{ \omega^\mathcal{H}_s \right\}_{0 \leqslant s \leqslant 1}$ is a Brownian motion with values in $\mathbb{R}^{n}$. The usual completion of the natural filtration generated by $\left\{ \omega^\mathcal{H}_s \right\}_{0 \leqslant s \leqslant 1}$  will be denoted by $\mathcal{F}_s$.
\end{notation}
Recall that for $x \in \M$ the horizontal Brownian motion on $\M$ started at $x$ is defined as $W_s=\pi(U_s)$, where $U_{s}$ is a solution to the Stratonovich stochastic differential equation
\eqref{e.HorizBM} with $U_0=u_0 \in \mathcal{O}_\mathcal{H} (\M)$ such that $\pi(u_0)=x$.

\subsection{Horizontal Weitzenb\"ock type formulas}

We start by introducing a family of connections that will be of interest to us later, and we review some known results on the Weitzenb\"ock formulas proved previously in \cite{BaudoinKimWang2016}.

\subsubsection{Generalized Levi-Civita connections and adjoint connections}\label{W:6}
In Section \ref{s.WeitzFormula} we aim at studying Weitzenb\"ock-type identities for the horizontal Laplacian, and for this we need to introduce a new class of connections. The main reason why we use these connections is that we can not make use of the Bott connection since the adjoint connection to the Bott connection is not metric. We refer to \cite{BaudoinKimWang2016, GrongThalmaier2016a, GrongThalmaier2016b, Elworthy2017} and especially the books \cite{ElworthyLeJanLiBook1999, ElworthyLeJanLiBook2010} for a discussion on Weitzenb\"ock-type identities and adjoint connections. Instead we make use of the family of connections first  introduced  in \cite{Baudoin2017b} and only keep the Bott connection as a reference connection.

This family of connections is constructed from a natural variation of the metric that we recall now. The Riemannian metric $g$ can be split using horizontal and vertical subbundles described in Section \ref{ss.HorizVertSubbundles}

\begin{equation}\label{e.1}
g=g_\mathcal{H} \oplus g_{\mathcal{V}}.
\end{equation}
Using the splitting of the Riemannian metric $g$ in \eqref{e.1} we can introduce the following one-parameter family of Riemannian metrics

\[
g_{\varepsilon}=g_\mathcal{H} \oplus  \frac{1}{\varepsilon }g_{\mathcal{V}}, \quad \varepsilon >0.
\]
One can check that for every $\varepsilon >0$, $\nabla g_\varepsilon =0$ where $\nabla$ is the Bott connection. The metric $g_\varepsilon$ then induces a metric on the cotangent bundle which we still denote by  $g_\varepsilon$, and therefore
\[
\| \eta \|^2_{\varepsilon} =\| \eta \|_\mathcal{H}^2+\varepsilon \| \eta \|_\mathcal{V}^2, \text{ for every } \eta \in T^*_x \M.
\]

For each $Z \in \Gamma^\infty(\mathcal{V})$ there is a unique skew-symmetric endomorphism  $J_Z: \mathcal{H}_x \to \mathcal{H}_x$, $x \in \M$ such that for all horizontal vector fields $X, Y \in \mathcal{H}_{x}$
\begin{align}\label{Jmap}
g_\mathcal{H} (J_Z (X), Y)_{x}= g_\mathcal{V} (Z, T(X, Y))_{x},
\end{align}
where $T$ is the torsion tensor of $\nabla$. We then extend $J_{Z}$ to be $0$ on  $\mathcal{V}_x$. Also, to ensure \eqref{Jmap} holds also for $Z\in \Gamma^\infty (\mathcal{H})$, taking into account \eqref{e.1.2} we set $J_Z \equiv 0$.

Following \cite{Baudoin2017b} we  introduce  the following family of connections
\[
\nabla^\varepsilon_X Y= \nabla_X Y -T(X,Y) +\frac{1}{\varepsilon} J_Y X, \quad X,Y\in \Gamma^\infty(\M).
\]
It is easy to check that $\nabla^\varepsilon g_\varepsilon =0$  and the torsion of $\nabla^\varepsilon$ is given by
\[
T^{\varepsilon}(X,Y)=-T(X,Y)+\frac{1}{\varepsilon} J_YX-\frac{1}{\varepsilon} J_XY, \quad X,Y \in \Gamma^\infty(\M).
\]
The adjoint connection to $\nabla^\varepsilon$ as described by B.~Driver in \cite{Driver1992b}, see also \cite[Section 1.3]{ElworthyLeJanLiBook1999} for a discussion about adjoint connections,   is then given by
\begin{equation}\label{e.adjointconn}
\widehat{\nabla}^\varepsilon_X Y:=\nabla^\varepsilon_X Y -T^\varepsilon (X,Y) =\nabla_X Y +\frac{1}{\varepsilon} J_X Y,
\end{equation}
thus $\widehat{\nabla}^\varepsilon$ is also a metric connection. Moreover, it preserves the horizontal and vertical bundles.
\begin{remark}\label{compatibility F}
Note that the connection $\widehat{\nabla}^\varepsilon$ therefore satisfies  Assumption \ref{property D} for every $\ve >0$.
\end{remark}

For later use, we record that the torsion of $\widehat{\nabla}^\varepsilon$ is
\begin{equation}\label{e.4.3}
\widehat{T}^\varepsilon (X,Y)=-T^{\varepsilon}(X,Y)= T(X,Y)-\frac{1}{\varepsilon} J_Y X+\frac{1}{\varepsilon} J_X Y.
\end{equation}
The Riemannian curvature tensor of $\widehat{\nabla}^\varepsilon$  can be computed explicitly  in terms of the Riemannian curvature tensor $R$ of the Bott connection $\nabla$ and it is given by the following lemma.
\begin{lem}\label{curvature adjoint}
For $X, Y, Z \in \Gamma^\infty(\M)$
\begin{align*}
\widehat{R}^\varepsilon (X,Y)Z= & R(X,Y)Z +\frac{1}{\varepsilon} J_{T(X,Y)} Z +\frac{1}{\varepsilon^2} (J_X J_Y -J_Y J_X)Z  +
\\
& \frac{1}{\varepsilon} (\nabla_X J)_Y  Z -  \frac{1}{\varepsilon} (\nabla_Y J)_X  Z,
\end{align*}
where $R$ is the curvature tensor of the Bott connection.
\end{lem}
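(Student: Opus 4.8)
The plan is to reduce everything to the standard formula describing how the curvature tensor of a connection changes when one adds a tensor field, and then to specialize it. By \eqref{e.adjointconn} we may write $\widehat{\nabla}^\varepsilon = \nabla + \tfrac{1}{\varepsilon} A$, where $A$ is the $(1,2)$-tensor field on $\M$ defined by $A(X,Y):=J_X Y$ for $X,Y \in \Gamma^\infty(\M)$. The first point to check is that $A$ is genuinely $C^\infty(\M)$-bilinear: this follows from the pointwise definition \eqref{Jmap} of $J_Z$ together with the extension conventions ($J_Z\equiv 0$ on $\mathcal{V}_x$, and $J_Z\equiv 0$ for $Z$ horizontal). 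In particular $\nabla A$ is a well-defined tensor and $(\nabla_X J)_Y Z = (\nabla_X A)(Y,Z)$.

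Next I would establish the general identity: for any connection $D=\nabla + A$ on $\M$ with $A$ a $(1,2)$-tensor,
\begin{align*}
R^D(X,Y)Z = R^\nabla(X,Y)Z &+ (\nabla_X A)(Y,Z) - (\nabla_Y A)(X,Z) \\
&+ A\bigl(T^\nabla(X,Y),Z\bigr) + A\bigl(X,A(Y,Z)\bigr) - A\bigl(Y,A(X,Z)\bigr).
\end{align*}
This is obtained by expanding $R^D(X,Y)Z = D_X D_Y Z - D_Y D_X Z - D_{[X,Y]}Z$ with $D=\nabla+A$ and applying the Leibniz rule $\nabla_X(A(Y,Z)) = (\nabla_X A)(Y,Z) + A(\nabla_X Y,Z) + A(Y,\nabla_X Z)$: the terms of the form $A(\,\cdot\,,\nabla_{\,\cdot\,}Z)$ cancel in pairs, and the remaining first-derivative-of-$A$ terms combine via $A(\nabla_X Y,Z) - A(\nabla_Y X,Z) - A([X,Y],Z) = A(\nabla_X Y - \nabla_Y X - [X,Y],Z) = A(T^\nabla(X,Y),Z)$. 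It is worth emphasizing that the torsion term $A(T^\nabla(X,Y),Z)$ appears precisely because the Bott connection is \emph{not} torsion-free; it is absent in the classical Levi-Civita computation. If one prefers to avoid even this bookkeeping, one can work at a fixed point $x_0$ in a $\nabla$-normal frame as in Lemma \ref{frame}, where the Christoffel symbols of $\nabla$ vanish at $x_0$, so that only $D_X D_Y Z - D_Y D_X Z - D_{[X,Y]}Z$ with $D=\nabla+\tfrac1\varepsilon J$ must be expanded there.

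Finally I would substitute $A=\tfrac1\varepsilon J$ and $T^\nabla = T$ (the torsion of $\nabla$) into the general identity, which immediately gives
\begin{align*}
\widehat{R}^\varepsilon(X,Y)Z = R(X,Y)Z &+ \frac{1}{\varepsilon} J_{T(X,Y)} Z + \frac{1}{\varepsilon^2}\bigl(J_X J_Y - J_Y J_X\bigr)Z \\
&+ \frac{1}{\varepsilon}(\nabla_X J)_Y Z - \frac{1}{\varepsilon}(\nabla_Y J)_X Z,
\end{align*}
as claimed.

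There is no real obstacle here; the argument is a short and essentially mechanical calculation. The only place that requires genuine care is keeping track of the torsion contribution $A(T^\nabla(X,Y),Z)$ and of the sign conventions in the definitions of the curvature tensor and of the adjoint connection $\widehat{\nabla}^\varepsilon$; once those are pinned down, the identity falls out directly.
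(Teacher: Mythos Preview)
Your proof is correct and is essentially the same computation as the paper's; the paper simply expands $\widehat{\nabla}^\varepsilon_X\widehat{\nabla}^\varepsilon_Y Z-\widehat{\nabla}^\varepsilon_Y\widehat{\nabla}^\varepsilon_X Z-\widehat{\nabla}^\varepsilon_{[X,Y]}Z$ directly with $\widehat{\nabla}^\varepsilon=\nabla+\tfrac1\varepsilon J$ and collects terms, whereas you first isolate the general identity for $R^{\nabla+A}$ and then specialize to $A=\tfrac1\varepsilon J$. The only substantive point---that the torsion of $\nabla$ produces the extra term $J_{T(X,Y)}Z$---is handled correctly in both.
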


\begin{proof}
\begin{align*}
& \widehat R^{\varepsilon}(X,Y)Z=\widehat \nabla^{\varepsilon}_X\widehat \nabla_Y^{\varepsilon}Z-\widehat \nabla^{\varepsilon}_Y\widehat \nabla^{\varepsilon}_XZ-\widehat \nabla^{\varepsilon}_{[X,Y]}Z\\
=&(\nabla_X\nabla_Y+\frac{1}{\varepsilon}(\nabla_X J)_Y +\frac{1}{\varepsilon} J_X\nabla_Y+\frac{1}{\varepsilon} J_Y\nabla_X+\frac{1}{\varepsilon} J_{\nabla_X Y}+\frac{1}{\varepsilon^2} J_X J_Y  )Z\\
&-(\nabla_Y\nabla_X+\frac{1}{\varepsilon}(\nabla_Y J)_X+\frac{1}{\varepsilon} J_Y\nabla_X+\frac{1}{\varepsilon} J_{\nabla_Y X}+\frac{1}{\varepsilon} J_X\nabla_Y+\frac{1}{\varepsilon^2}J_Y J_X)Z\\
&-\nabla_{[X,Y]}Z-\frac{1}{\varepsilon} J_{[X,Y]}Z\\
=&R(X,Y)Z+\frac{1}{\varepsilon^2}( J_X J_Y- J_Y J_X)Z+
\\
& \frac{1}{\varepsilon}(\nabla_X  J)_YZ-\frac{1}{\varepsilon}(\nabla_Y J)_XZ +\frac{1}{\varepsilon} J_{T(X,Y)} Z.
\end{align*}
\end{proof}

We define  the horizontal Ricci curvature $\mathfrak{Ric}_{\mathcal{H}}$ for the Bott connection as the fiberwise symmetric linear map on one-forms such that for all smooth functions $f, g$ on $\M$
\[
\langle  \mathfrak{Ric}_{\mathcal{H}} (df), dg \rangle=\mathbf{Ric} \left(\nabla_\mathcal{H} f, \nabla_\mathcal{H} g\right)=\mathbf{Ric}_\mathcal{H} ( \nabla f, \nabla g),
\]
where $\mathbf{Ric}$ is the Ricci curvature of the Bott connection $\nabla$ and $\mathbf{Ric}_\mathcal{H}$ is its horizontal Ricci curvature (horizontal trace of the full curvature tensor $R$  of the Bott connection). The fact that $\mathbf{Ric}_\mathcal{H}$ is symmetric follows from  \cite[Lemma 4.2 ]{Hladky2012}.

\subsubsection{Weitzenb\"ock formulas}\label{s.WeitzFormula}

A key ingredient in studying the horizontal Brownian motion  is the Weitzenb\"{o}ck formula that has been proven in \cite{Baudoin2017b, BaudoinKimWang2016}. We recall here this formula. If $Z_1, \dots, Z_m$ is a local vertical frame, then the $(1,1)$ tensor

\[
\mathbf{J}^2 := \sum_{\ee=1}^m J_{Z_\ee}J_{Z_\ee}
\]
does not depend on the choice of the frame and may be defined globally.

\begin{example}[Example \ref{ex.Kcontact} revisited]  If $\M$ is a K-contact manifold equipped with the Reeb foliation, then, by taking $Z$ to be the Reeb vector field, one gets $\mathbf{J}^2=J_Z^2=-\mathbf{Id}_{\mathcal{H}}$.
\end{example}

The \emph{horizontal divergence} of the torsion $T$ is the $(1,1)$ tensor  which in a local horizontal frame $X_1, \dots, X_n$  is defined by
\begin{equation}\label{e.HorizDivergence}
\delta_\mathcal{H} T (X):= -\sum_{j=1}^n(\nabla_{X_j} T) (X_j,X).
\end{equation}


By using the duality between the tangent and cotangent bundles with respect to the metric $g$, we can identify the $(1,1)$ tensors $\mathbf{J}^2$ and $\delta_\mathcal{H} T$ with linear maps on the cotangent bundle $T^* \M$.

Namely, let $\sharp: T^{\ast}\M \rightarrow T\M$ be the standard musical (raising an index) isomorphism which is defined as the unique vector $\omega^{\sharp}$ such that for any $x \in \M$

\[
g\left( \omega^{\sharp}, X \right)_{x}=\omega \left( X \right) \text{ for all } X \in T_{x}\M,
\]
while in local coordinates the isomorphism $\sharp$ can be written as follows

\[
\omega=\sum_{i=1}^{n+m}\omega_{i}dx^{i} \longmapsto \omega^{\sharp}= \sum_{j=1}^{n+m}\omega^{j}\partial_{j}=\sum_{j=1}^{n+m}\sum_{i=1}^{n+m}g^{ij}\omega_{i}\partial_{j}.
\]
The inverse of this isomorphism is the (lowering an index) isomorphism $\flat: T\M \rightarrow T^{\ast}\M$ defined by
\[
X^{\flat}=g\left( X, \cdot \right)_{x}, X \in T_{x}\M
\]
and in local coordinates

\[
X=\sum_{i=1}^{n+m}X^{i}\partial_{i} \longmapsto X^{\flat}=\sum_{i=1}^{n+m} X_{i}dx^{i}=\sum_{i=1}^{n+m} \sum_{j=1}^{n+m}g_{ij}X^{j}dx^{i}.
\]

If $\eta$ is a one-form, we define the horizontal gradient in a local adapted frame of $\eta$ as the $(0,2)$ tensor
\[
\nabla_\mathcal{H} \eta =\sum_{i=1}^n \nabla_{X_i} \eta \otimes \theta_i,
\]
where $\theta_i, i=1,\dots, n$ is the dual to $X_i$.

Finally, for $\varepsilon >0 $, we consider the following operator  which is defined on one-forms by
\begin{align}\label{def}
\square_\varepsilon:=\sum_{i=1}^n (\nabla_{X_i} -\mathfrak{T}^\varepsilon_{X_i})^2 - ( \nabla_{\nabla_{X_i} X_i}-  \mathfrak{T}^\varepsilon_{\nabla_{X_i} X_i})-\frac{1}{ \varepsilon}\mathbf{J}^2+\frac{1}{\varepsilon} \delta_\mathcal{H} T- \mathfrak{Ric}_{\mathcal{H}},
\end{align}
where $\mathfrak{T}^\varepsilon$ is the $(1,1)$ tensor defined by

\[
\mathfrak{T}^\varepsilon_X Y=-T(X,Y) +\frac{1}{\varepsilon} J_Y X, \quad X,Y\in \Gamma^\infty(\M).
\]
Similarly as before, we will use the notation
\[
\mathfrak{T}^\varepsilon_\mathcal{H} \eta :=\sum_{i=1}^n \mathfrak{T}^\varepsilon_{X_i} \eta  \otimes \theta_i.
\]
The expression in \eqref{def} does not depend on the choice of the local horizontal frame and thus $\square_\varepsilon$ may be globally defined. Formally, we have
\begin{align}\label{def2}
\square_\varepsilon=-(\nabla_\mathcal{H} -\mathfrak{T}_\mathcal{H}^\varepsilon)^* (\nabla_\mathcal{H} -\mathfrak{T}_\mathcal{H}^\varepsilon)-\frac{1}{ \varepsilon}\mathbf{J}^2+\frac{1}{\varepsilon} \delta_\mathcal{H} T- \mathfrak{Ric}_{\mathcal{H}},
\end{align}
where the adjoint is understood with respect to the $L^2\left( \M, g_\varepsilon, \mu \right)$ inner product on sections, i.e. $\int_\M \langle\cdot,\cdot\rangle_{\varepsilon}d\mu$ (see \cite[Lemma 5.3]{BaudoinEMS2014} for more detail). The main result in \cite{BaudoinKimWang2016} is the following. Here the Laplacian $L$ is defined by Equation \eqref{Hormander form} in Section \ref{Dirichlet}
\begin{thm}[Lemma 3.3, Theorem 3.1 in  \cite{BaudoinKimWang2016}]
\label{Weitzenbock}
Let $f \in C^\infty(\M)$,  $x \in \M$ and  $\varepsilon>0$, then
\begin{equation}\label{pol}
dLf (x)=\square_\varepsilon df(x),
\end{equation}
where $L$ is defined by Equation \eqref{Hormander form}.
\end{thm}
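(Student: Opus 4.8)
The plan is to prove the intertwining relation $d\circ L=\square_\varepsilon\circ d$ by a pointwise computation, exploiting the fact that $\square_\varepsilon$ as defined in \eqref{def} does not depend on the choice of local horizontal frame. Fix $x_0\in\M$ and work in a normal frame $\{X_1,\dots,X_n,Z_1,\dots,Z_m\}$ around $x_0$ as in Lemma \ref{frame}, so that at $x_0$ all the structure functions $\omega_{ij}^k,\beta_{ij}^k$ and the Bott Christoffel symbols vanish. In such a frame $L=\sum_{i=1}^n\left(X_i^2-\nabla_{X_i}X_i\right)$ (this is the form \eqref{Hormander form} with the Bott connection $\nabla$), and at the center $x_0$ it reduces to $\sum_{i=1}^nX_i^2$.

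First I would record the elementary identity relating Lie and covariant derivatives of a one-form: for any vector field $Y$ and one-form $\eta$,
\[
\mathcal{L}_Y\eta=\nabla_Y\eta+\eta\!\left(\nabla_{(\cdot)}Y\right)+\eta\!\left(T(Y,\cdot)\right),
\]
where $T$ is the torsion of the Bott connection and $\eta(T(Y,\cdot))$ denotes the one-form $X\mapsto\eta(T(Y,X))$. Applied with $\eta=df$ (so that $\mathcal{L}_Ydf=d(Yf)$) and iterated to compute $d(X_i^2f)$, then summed over $i$, this organizes $d(Lf)$ into three groups at $x_0$: (i) a second-order piece $\sum_i\left(\nabla_{X_i}\nabla_{X_i}df-\nabla_{\nabla_{X_i}X_i}df\right)$, the rough Laplacian of the Bott connection applied to $df$; (ii) torsion-linear first-order terms, including contributions of the form $\sum_i(\nabla_{X_i}df)(T(X_i,\cdot))$ and a genuine $\sum_i(\nabla_{X_i}T)(X_i,\cdot)$ term, which by \eqref{e.HorizDivergence} is $-\delta_\mathcal{H}T$ acting on $(df)^\sharp$; and (iii) a zeroth-order curvature term $-\sum_iR(X_i,\cdot)X_i$ acting on $(df)^\sharp$, coming from commuting the two covariant derivatives, which is exactly $-\mathfrak{Ric}_{\mathcal{H}}(df)$.

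The second step is to repackage groups (i) and (ii) in terms of the connection $\nabla^\varepsilon$. Writing $\nabla^\varepsilon_X=\nabla_X+\mathfrak{T}^\varepsilon_X$ with $\mathfrak{T}^\varepsilon_XY=-T(X,Y)+\tfrac1\varepsilon J_YX$, one expands $\sum_i(\nabla_{X_i}-\mathfrak{T}^\varepsilon_{X_i})^2-(\nabla_{\nabla_{X_i}X_i}-\mathfrak{T}^\varepsilon_{\nabla_{X_i}X_i})$ acting on $df$ and checks that it reproduces group (i) together with the torsion-linear part of group (ii), up to the quadratic-in-$\mathfrak{T}^\varepsilon$ terms produced by the square. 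The key algebraic input is the defining relation \eqref{Jmap}, $g_\mathcal{H}(J_Z(X),Y)=g_\mathcal{V}(Z,T(X,Y))$, together with the structural properties of the Bott torsion in \eqref{e.1.2} and the skew-symmetry of $J_Z$: these identities turn the $\tfrac1{\varepsilon^2}$ double-$J$ contributions into $-\tfrac1\varepsilon\mathbf{J}^2$ on the horizontal part of $df$, and pin down the coefficient of the $\tfrac1\varepsilon\delta_\mathcal{H}T$ term coming from $\sum_i(\nabla_{X_i}T)(X_i,\cdot)$. Combining with the curvature term $-\mathfrak{Ric}_{\mathcal{H}}$ from (iii), the right-hand side assembles precisely into \eqref{def}; and since $dLf$ carries no $\varepsilon$, this simultaneously forces the $\varepsilon$-dependent pieces to cancel appropriately and yields the identity for every $\varepsilon>0$.

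The main obstacle is exactly this matching in the second step: correctly identifying which first-order torsion terms get absorbed into the cross terms of $(\nabla_{X_i}-\mathfrak{T}^\varepsilon_{X_i})^2$, tracking horizontal versus vertical components throughout, and verifying that $\mathbf{J}^2$ and $\delta_\mathcal{H}T$ emerge with the stated coefficients. Lemma \ref{curvature adjoint}, which expresses $\widehat R^\varepsilon$ in terms of $R$, $J$ and $\nabla J$, is the natural bookkeeping device for the curvature contributions, and working at the center $x_0$ of the normal frame removes the nuisance terms involving $\nabla_{X_i}X_i$. Once the pointwise identity is checked at an arbitrary $x_0$, frame-independence of $\square_\varepsilon$ promotes it to a global statement. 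This computation is carried out in full detail in \cite{BaudoinKimWang2016}, to which I would refer for the remaining bookkeeping.
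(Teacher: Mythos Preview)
The paper does not actually prove this theorem: it is stated as a citation of Lemma~3.3 and Theorem~3.1 in \cite{BaudoinKimWang2016}, with no argument given in the present paper. Your proposal therefore goes further than the paper itself, which simply invokes the reference.

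Your sketch is a correct outline of the standard computation behind such a Weitzenb\"ock identity: work at the center of a normal frame (Lemma~\ref{frame}), use the identity $\mathcal{L}_Y\eta=\nabla_Y\eta+\eta(\nabla_{(\cdot)}Y)+\eta(T(Y,\cdot))$ with $\eta=df$ to express $d(X_i^2 f)$ covariantly, and then regroup the resulting rough Laplacian, torsion, and curvature pieces to match \eqref{def}. You correctly identify the main labor as the bookkeeping in the second step---tracking horizontal versus vertical components via \eqref{e.1.2} and \eqref{Jmap} to see $\tfrac{1}{\varepsilon}\mathbf{J}^2$ and $\tfrac{1}{\varepsilon}\delta_\mathcal{H}T$ emerge with the right signs---and you ultimately defer that bookkeeping to \cite{BaudoinKimWang2016}, which is exactly what the paper does. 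One small point of caution: in \eqref{def} the operators $\nabla_{X_i}$ and $\mathfrak{T}^\varepsilon_{X_i}$ act on one-forms, so when you write $\nabla^\varepsilon_X=\nabla_X+\mathfrak{T}^\varepsilon_X$ (which is the vector-field relation) you should be explicit about the dualization when passing to one-forms; this is where sign conventions can slip.
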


\begin{rem}
Using \cite[Lemma 3.4]{BaudoinKimWang2016}, we see that for $\varepsilon_1,\varepsilon_2 >0$, the operator $\square_{\varepsilon_1}-\square_{\varepsilon_2}  $ vanishes on exact one-forms. It is therefore no surprise that the left hand side of \eqref{pol} does not depend of $\varepsilon$.
\end{rem}

To conclude this section we remark, and this is not a coincidence,  that the potential term in the Weitzenb\"ock identity can be identified with the horizontal  Ricci curvature of the adjoint connection $\widehat{\nabla}^\varepsilon$.

\begin{lemma}\label{ricci adjoint}
The horizontal  Ricci curvature of the adjoint connection $\widehat{\nabla}^\varepsilon$ is given by
\[
\Hat{\mathfrak{Ric}}_{\mathcal H}^{\varepsilon}=\mathfrak{Ric}_{\mathcal H}-\frac{1}{\varepsilon}\delta^*_{\mathcal H}T+\frac{1}{\varepsilon}\mathbf J^2,
\]
where $\delta^*_{\mathcal H}T$ denotes the adjoint of $\delta_{\mathcal H}T$ with respect to the metric $g$.
\end{lemma}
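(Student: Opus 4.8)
The plan is to compute the horizontal Ricci curvature $\widehat{\mathfrak{Ric}}^{\varepsilon}_{\mathcal H}$ of $\widehat\nabla^\varepsilon$ directly from the explicit curvature tensor of Lemma \ref{curvature adjoint}, by taking the horizontal trace of $\widehat R^\varepsilon$ against a local adapted orthonormal frame $X_1,\dots,X_n$ of $\mathcal H$, and then matching the outcome term by term against $\mathfrak{Ric}_{\mathcal H}$, $\mathbf J^2$ and $\delta_{\mathcal H}T$.

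First I would dispose of the straightforward terms. The $R(X,Y)Z$ term of Lemma \ref{curvature adjoint} contributes, by the very definition of $\mathfrak{Ric}_{\mathcal H}$ recalled above, the horizontal Ricci curvature of the Bott connection. The term $\frac{1}{\varepsilon^2}(J_XJ_Y-J_YJ_X)Z$ disappears from the horizontal trace: when $X$ (or $Y$) is replaced by a horizontal frame vector $X_i$ we get $J_{X_i}=0$ since $J_Z\equiv 0$ for horizontal $Z$, and the same observation kills every term in which a horizontal frame insertion lands in the subscript of a $J$.

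Next, the term $\frac{1}{\varepsilon}J_{T(X,Y)}Z$ should give the $\frac{1}{\varepsilon}\mathbf J^2$ summand. Here I would invoke the defining identity \eqref{Jmap} to write $\langle J_{T(X_i,Y)}Z,X_i\rangle = g_{\mathcal V}\big(T(X_i,Y),T(Z,X_i)\big)$, sum over $i$, and then identify $\sum_i g_{\mathcal V}\big(T(X_i,Y),T(X_i,Z)\big)$ with $-\langle\mathbf J^2Y,Z\rangle$ by expanding $T(X_i,\cdot)$ in a vertical orthonormal frame $Z_1,\dots,Z_m$ and using $\mathbf J^2=\sum_\ell J_{Z_\ell}J_{Z_\ell}$ together with the skew-symmetry of each $J_{Z_\ell}$ (which is \eqref{Jmap} read the other way). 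Tracking the sign produced by $T(Z,X_i)=-T(X_i,Z)$ then yields the claimed $+\frac{1}{\varepsilon}\mathbf J^2$.

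The remaining and genuinely delicate contribution is the $-\frac{1}{\varepsilon}\delta^*_{\mathcal H}T$ term, which must be extracted from the two covariant-derivative terms $\frac{1}{\varepsilon}\big((\nabla_XJ)_YZ-(\nabla_YJ)_XZ\big)$. The algebraic input is that, since $\nabla$ is metric and $J$ is determined tensorially by $T$ through \eqref{Jmap}, one has $\langle(\nabla_XJ)_ab,c\rangle = g_{\mathcal V}\big(a,(\nabla_XT)(b,c)\big)$; tracing this against the horizontal frame and reorganizing the contraction so that the differentiation direction meets a torsion slot reproduces exactly the horizontal divergence $\delta_{\mathcal H}T(X)=-\sum_j(\nabla_{X_j}T)(X_j,X)$ of \eqref{e.HorizDivergence}, the passage to the adjoint $\delta^*_{\mathcal H}T$ coming from the fact that the Ricci curvature is recorded as a symmetric endomorphism via $\sharp/\flat$ while $\delta_{\mathcal H}T$ naturally pairs on the opposite side. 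I expect this last step --- the index bookkeeping, the Bianchi-type manipulations needed to move the derivative onto the correct slot, and the resulting adjoint/symmetrization --- to be the main obstacle; everything else is routine. A useful cross-check, which also gives an alternative proof, is to compare directly with the Weitzenb\"ock identity \eqref{def2}: since $-(\nabla_{\mathcal H}-\mathfrak T^\varepsilon_{\mathcal H})^*(\nabla_{\mathcal H}-\mathfrak T^\varepsilon_{\mathcal H})$ is the Bochner (rough) Laplacian of the connection $\nabla^\varepsilon$ acting on one-forms, the general principle that the zeroth-order term of such a formula is the horizontal Ricci curvature of the adjoint connection $\widehat\nabla^\varepsilon$ (see \cite{Driver1992b} and \cite[Section 1.3]{ElworthyLeJanLiBook1999}) forces the potential $-\frac{1}{\varepsilon}\mathbf J^2+\frac{1}{\varepsilon}\delta_{\mathcal H}T-\mathfrak{Ric}_{\mathcal H}$ to equal $-\widehat{\mathfrak{Ric}}^{\varepsilon}_{\mathcal H}$, which is the asserted identity once the $\delta_{\mathcal H}T$/$\delta^*_{\mathcal H}T$ transposition between one-forms and vector fields is taken into account.
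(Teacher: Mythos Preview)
Your proposal is correct and follows essentially the same route as the paper: trace the explicit curvature formula of Lemma~\ref{curvature adjoint} against a horizontal orthonormal frame and identify each term. The only remark is that the covariant-derivative contribution is more straightforward than you anticipate: no Bianchi-type manipulation is needed, since $(\nabla_X J)_{X_i}=0$ outright (the subscript $X_i$ is horizontal and $\nabla$ preserves $\mathcal H$, so $J_{X_i}=J_{\nabla_X X_i}=0$), leaving only $\sum_i g_{\mathcal H}\big((\nabla_{X_i}J)_X Y,X_i\big)$, which by skew-symmetry of $(\nabla_{X_i}J)_X$ and your differentiated identity $\langle(\nabla_WJ)_a b,c\rangle=g_{\mathcal V}\big(a,(\nabla_WT)(b,c)\big)$ gives $g_{\mathcal V}\big(\delta_{\mathcal H}T(Y),X\big)$ directly.
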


\begin{proof}
Let $X, Y \in \Gamma^\infty( T \M )$ and $X_1,\cdots,X_n$ be a local horizontal orthonormal frame. By the definition of the horizontal Ricci curvature  and Lemma \ref{curvature adjoint} we have
\begin{align*}
 & \Hat{\mathbf{Ric}}_\mathcal{H} ^{\varepsilon}(X,Y)\\
 =& \sum_{i=1}^ng_{\mathcal H}(\Hat{R}^{\varepsilon}(X_i,X)Y,X_i)\\
=&\sum_{i=1}^ng_{\mathcal H}(R(X_i,X)Y,X_i)+\sum_{i=1}^ng_{\mathcal H}\left(\frac{1}{\varepsilon} J_{T(X_i,X)}Y,X_i \right)\\
&+\sum_{i=1}^ng_{\mathcal H}\left(\frac{1}{\varepsilon}(\nabla_{X_i}  J)_{X}Y-\frac{1}{\varepsilon}(\nabla_{X} J)_{X_i} Y,X_i\right).
\end{align*}
For the first term, we have
\[
\sum_{i=1}^ng_{\mathcal H}(R(X_i,X)Y,X_i)=  \mathbf{Ric}_\mathcal{H} (X,Y).
\]
For the second term, we easily see that
\begin{align*}
\sum_{i=1}^ng_{\mathcal H}\left( J_{T(X_i,X)}Y,X_i \right)
 & =-\sum_{i=1}^ng_{\mathcal V}\left( T(X,X_i),T(Y,X_i)  \right) \\
 &=g_\mathcal{H} ( \mathbf{J}^2 X, Y).
\end{align*}
For the third term, we first observe that $g_\mathcal{H} ( (\nabla_X  J)_{X_i}Y,X_i) =0$. Then, we have
\begin{align*}
\sum_{i=1}^ng_{\mathcal H}\left( (\nabla_{X_i} J)_X Y,X_i\right)& =-\sum_{i=1}^ng_{\mathcal H}\left( (\nabla_{X_i} J)_X X_i,Y\right) \\
& =-\sum_{i=1}^ng_{\mathcal V}\left( (\nabla_{X_i} T)(X_i, Y),X\right) \\
 &=g_{\mathcal V}\left( \delta_\mathcal{H} T (Y),X \right).
\end{align*}
\end{proof}

\subsection{Integration by parts formula on the horizontal path space}\label{s.IbyP}

We fix $\varepsilon >0$ throughout the section. Our goal in this section is to prove integration by parts formulas on the path space of the horizontal Brownian motion. Some of the integration by parts formulas for the damped Malliavin derivative have been already announced in a less general and slightly different setting in \cite{BaudoinFeng2016}. The integration by part formulas for the intrinsic Malliavin derivative are new. We point out a significant difference of our techniques from what have been used in  \cite{Baudoin2017b, BaudoinEMS2014, BaudoinFeng2016}.  Namely, we shall mostly make use of the adjoint connection $\widehat{\nabla}^\varepsilon$ instead of the Bott connection. Below we summarize important properties of the connection $\widehat{\nabla}^\varepsilon$ which will be used extensively in the sequel.

\begin{rem}[Properties of the adjoint connection] Let $\widehat{\nabla}^\varepsilon$ be the adjoint connection defined by Equation \ref{e.adjointconn}. Then it satisfies the following properties.

\begin{itemize}

\item The adjoint connection is \emph{metric}, that is, $\widehat{\nabla}^\varepsilon g_\varepsilon =0$;

\item The adjoint connection is \emph{horizontal}, that is, if $X \in \Gamma^\infty(\mathcal{H})$ and $Y \in \Gamma^\infty(\M)$ then $\widehat{\nabla}_Y^\varepsilon X \in \Gamma^\infty(\mathcal{H})$;

 \item The torsion tensor $\widehat{T}^\varepsilon$ of $\widehat{\nabla}^\varepsilon$ is \emph{skew-symmetric}, that is, it satisfies  B.~Driver's  skew-symmetry condition (see \cite[Definition 8.1]{Driver1992b}) as follows.  For $X,Y,Z \in \Gamma^\infty(\M)$
\[
\langle \widehat{T}^\varepsilon (X, Y), Z \rangle_\varepsilon =-\langle \widehat{T}^\varepsilon (X, Z), Y \rangle_\varepsilon.
\]
\end{itemize}
The latter can be seen from Equation \eqref{e.4.3}
\[
\widehat{T}^\varepsilon (X,Y)=T(X,Y)-\frac{1}{\varepsilon} J_Y X+\frac{1}{\varepsilon} J_X Y
\]
and the definition of $J$. In addition to \cite{Driver1992b} we refer to \cite{ElworthyLeJanLiBook1999} for more details on adjoint connections  and connections with skew-symmetric torsion, including examples.
\end{rem}

Next recall that a stochastic parallel transport on forms can be defined following  \cite[p. 50]{HsuEltonBook}.

\begin{notation}
Let $\widetilde{\nabla}$ be a general connection on $\M$, and $\left\{ M_s \right\}_{0 \leqslant s \leqslant 1}$ be a semimartingale on $\M$. We denote by
\[
\widetilde{\para}_{0,s} : T_{M_0} \M \to T_{M_s} \M
\]
the \emph{stochastic parallel transport} of vector fields along the paths of $\left\{ M_s \right\}_{0 \leqslant s \leqslant 1}$. Then by duality we can define the stochastic parallel transport on one-forms as follows. We have
\[
\widetilde{\para}_{0,s}: T_{M_s}^\ast \M \to T_{M_0}^\ast \M
\]
such that for $\alpha \in T_{M_s}^\ast \M$
\begin{equation}\label{e.SPTforms}
\langle \widetilde{\para}_{0,s} \alpha, v \rangle =\langle \alpha, \widetilde{\para}_{0, s} v\rangle, \quad v \in T_{M_0} \M.
\end{equation}
\end{notation}
In particular, the stochastic parallel transport for the adjoint connection  $\widehat{\nabla}^\varepsilon = \nabla+\frac{1}{\varepsilon}J$ along the paths of the horizontal Brownian motion $\left\{ W_s \right\}_{0 \leqslant s \leqslant 1}$ will be denoted by  $\widehat{\Theta}^\varepsilon_{s}$. Since  the adjoint connection $\widehat{\nabla}^\varepsilon$ is horizontal,  the map $\widehat{\Theta}^\varepsilon_{s}: T_{x} \M \to T_{W_s} \M$ is an isometry that preserves the horizontal bundle, that is, if $u \in \mathcal{H}_{x}$, then $\widehat{\Theta}^\varepsilon_{s} u \in \mathcal{H}_{W_t}$.  We see then that the anti-development of $\left\{ W_s \right\}_{0 \leqslant s \leqslant 1}$ defined as
 \[
 B_s:=\int_0^s (\widehat{\Theta}^\varepsilon_{r})^{-1} \circ dW_r,
 \]
is a Brownian motion in the horizontal space $\mathcal{H}_{x}$.

\begin{rem}\label{r.EpsilonInd}
Observe that on one-forms the process $\widehat{\Theta}^\varepsilon_{s}:T_{W_s}^*\mathbb{M}\rightarrow T^*_{x}\mathbb{M}$ is a solution to the following covariant Stratonovich stochastic differential equation
\begin{align*}
 d[\widehat{\Theta}^\varepsilon_{s} \alpha(W_s)] & =\widehat{\Theta}^\varepsilon_{s} \widehat{\nabla}^\varepsilon_{\circ dW_s} \alpha (W_s),
 \end{align*}
where $\alpha$ is any smooth one-form. Since $\widehat{\nabla}^\varepsilon_{\circ dW_s} =\nabla_{\circ dW_s} +\frac{1}{\varepsilon} J_{\circ dW_s} =\nabla_{\circ dW_s}$, we deduce that $\widehat{\Theta}^\varepsilon$ is actually independent of $\varepsilon$ and is therefore also the stochastic  parallel transport for the Bott connection. As a consequence, the Brownian motion $\left\{ B_s \right\}_{0 \leqslant s \leqslant 1}$ and its filtration  are also independent of the particular choice of $\varepsilon$.
 \end{rem}

We define a \emph{damped parallel transport} $\tau^\varepsilon_s:T_{W_s}^*\mathbb{M}\rightarrow T^*_{x}\mathbb{M}$  by the formula
\begin{equation}\label{tau=M Theta}
\tau^{\varepsilon}_s=\mathcal{M}_{s}^{\varepsilon}\Theta_{s}^{\varepsilon},
\end{equation}
where the process $\Theta_s^{\varepsilon}: T_{W_s}^{*}\mathbb{M}\rightarrow T_{x}^{*}\mathbb{M}$ is the  stochastic parallel transport of one-forms with respect to the connection $\nabla^\varepsilon=\nabla -\mathfrak{T}^{\varepsilon}$  along the paths of $\left\{ W_s \right\}_{0\leqslant s \leqslant 1}$.
The multiplicative functional $ \mathcal{M}_s^{\varepsilon}: T_{x}^{*}\mathbb{M} \to T_{x}^{*}\mathbb{M}$, $s \geqslant 0$,  is defined as the solution to the following ordinary differential equation
\begin{align}\label{multiplicative function M_t}
& \frac{d\mathcal{M}_s^{\varepsilon}}{ds}=-\frac{1}{2}\mathcal{M}_s^{\varepsilon}\Theta_s^{\varepsilon} \left(\frac{1}{\varepsilon}\mathbf{J}^2-\frac{1}{\varepsilon} \delta_\mathcal{H} T+\mathfrak{Ric}_{\mathcal{H}} \right)(\Theta_s^{\varepsilon})^{-1},
\\
& \mathcal{M}_0^{\varepsilon}=\mathbf{Id}. \notag
\end{align}

Observe that the process $\tau^\varepsilon_s:T_{W_s}^*\mathbb{M}\rightarrow T^*_{x}\mathbb{M}$ is a solution of the following covariant Stratonovich stochastic differential equation
\begin{align}\label{tau_t}
& d[\tau^\varepsilon_s \alpha(W_s)]
\\
&=\tau^\varepsilon_s\left( \nabla_{\circ dW_s}-\mathfrak{T}_{\circ dW_t}^{\varepsilon}-\frac{1}{2} \left( \frac{1}{ \varepsilon}\mathbf{J}^2-\frac{1}{\varepsilon} \delta_\mathcal{H} T+
\mathfrak{Ric}_{\mathcal{H}}\right)ds\right) \alpha(W_s), \notag
\\
& \tau_0=\mathbf{Id}, \notag
\end{align}
where $\alpha$ is any smooth one-form.

Also observe that $\mathcal{M}_s^{\varepsilon}$ is invertible and that its inverse is  the solution of the following ordinary differential equation
\begin{align}
 \frac{d(\mathcal{M}_s^{\varepsilon})^{-1}}{ds}=\frac{1}{2}\Theta_s^{\varepsilon} \left(\frac{1}{\varepsilon}\mathbf{J}^2-\frac{1}{\varepsilon} \delta_\mathcal{H} T+\mathfrak{Ric}_{\mathcal{H}} \right)(\Theta_s^{\varepsilon})^{-1} (\mathcal{M}_s^{\varepsilon})^{-1}.
\end{align}
In particular, it implies that $\tau^\varepsilon_s$ is invertible.

\subsubsection{ Malliavin and directional derivatives}

We recall that the horizontal Wiener measure on $W_{x_{0}}\left( \M \right)$ is defined as the distribution of the horizontal Brownian motion. The coordinate process on $W_{x_{0}}\left( \M \right)$ as before is denoted by $\left\{ w_s \right\}_{ 0 \leqslant s \leqslant 1}$.

\begin{definition}\label{cylinder} A function $F: W_{x_{0}}\left( \M \right) \rightarrow \mathbb{R}$ is called a \emph{$C^{k}$-cylinder function} if there exists a partition

\[
\pi:=\{0=s_0<s_1<s_2< \cdots <s_n\leqslant  1\}
\]
of the interval $[0,1]$ and $f\in C^{k}(\mathbb{M}^{n})$ such that
\begin{equation}\label{cylinder function}
F\left( w \right) = f \left( w_{s_{1}}, . . ., w_{s_{n}} \right) \text{ for all } w \in W_{x_{0}}\left( \M \right).
 \end{equation}
The function $F$ is called a \emph{smooth cylinder function} on $W_{x_{0}}\left( \M \right)$, if there exists a partition $\pi$ and $f\in C^{\infty}(\mathbb{M}^{n})$ such that \eqref{cylinder function} holds.

We denote by $\mathcal{F}C^{k} \left( W_{x_{0}}\left( \M \right)\right)$ the space of $C^{k}$-cylinder functions,  and by $\mathcal{F}C^{\infty} \left(W_{x_{0}}\left( \M \right)\right)$
the space of $C^{\infty}$-cylinder functions.
\end{definition}

\begin{rem}
Note that the representation \eqref{cylinder function} of a cylinder function is not unique. However, let $F \in \mathcal{F}C^{\infty} \left( W_{x_{0}}\left( \M \right)\right)$ and $n \geqslant 0$ be the minimal $n$ such that there exists a partition
\[
\pi:=\{0=s_0<s_1<s_2< \cdots <s_n\leqslant  1\}
\]
of the interval $[0,1]$ and $f\in C^{k}(\mathbb{M}^{n})$ such that
\begin{equation}
F\left( w \right) = f \left( w_{s_{1}}, . . ., w_{s_{n}} \right) \text{ for all } w \in W_{x_{0}}\left( \M \right).
 \end{equation}\label{minimal rep}
In that case, if
 \[
\tilde{\pi}=\{0=\tilde{s}_0<\tilde{s}_1<\tilde{s}_2< \cdots <\tilde{s}_n\leqslant  1\}
\]
is another partition of the interval $[0,1]$ and $\tilde{f}\in C^{k}(\mathbb{M}^{n})$ is such that
\begin{equation*}
F\left( w \right) = \tilde{f} \left( w_{\tilde{s}_{1}}, . . ., w_{\tilde{s}_{n}} \right) \text{ for all } w \in W_x (\M),
 \end{equation*}
then $\pi=\tilde{\pi}$ and $f=\tilde{f}$. Indeed, since
 \[
  f \left( w_{s_{1}}, . . ., w_{s_{n}} \right) =\tilde{f} \left( w_{\tilde{s}_{1}}, . . ., w_{\tilde{s}_{n}} \right)
 \]
we first deduce that $s_1=\tilde{s}_1$. Otherwise $d_1 f=0$ or $d_1 \tilde{f}=0$, where $d_1$ denotes the differential with respect to the first component. This contradicts the fact that $n$ is minimal. Similarly, $s_2=\tilde{s}_2$ and more generally $s_k=\tilde{s}_k$. The representation \eqref{minimal rep} will be referred to as the \emph{minimal representation} of $F$.
\end{rem}
We now turn to the definition of directional derivative on the horizontal path space.

\begin{definition}
Let $F=f \left( w_{s_{1}}, . . ., w_{s_{n}} \right)  \in \mathcal{F}C^{\infty} \left( W_x (\M)\right)$. For an $\mathcal{F}$-adapted and $T_x\M$-valued  semimartingale $(v(s))_{0 \leqslant s \leqslant 1}$ such that $v(0)=0$, we define the directional derivative
\[
\mathbf{D}_v F=\sum_{i=1}^n \left\langle d_if(W_{s_1},\cdots, W_{s_n}),  \widehat{\Theta}_{s_i}^{\varepsilon} v(s_i)  \right\rangle
\]
\end{definition}

\begin{definition}\label{gradient definition}
 For  $F=f \left( w_{s_{1}}, . . ., w_{s_{n}} \right)  \in \mathcal{F}C^{\infty} \left( W_x (\M))\right)$ we define the \emph{damped Malliavin derivative} by
\[
\widetilde{D}_s^{\varepsilon}F:=\sum_{i=1}^n\mathbf{1}_{[0, s_i]}(s)(\tau_s^{\varepsilon})^{-1}\tau_{s_i}^{\varepsilon}d_if(W_{s_1}, \cdots, W_{s_n}), \quad  0 \leqslant s \leqslant 1.
\]
\end{definition}
Observe that from this definition $\widetilde{D}_s^{\varepsilon}F \in T_{W_s}^\ast \M$.

\begin{rem}\label{jkmn}
Note that the directional derivative $\mathbf{D}$ is independent of $\varepsilon$, but the damped Malliavin derivative depends on $\varepsilon$. In addition, both the directional derivatives and damped Malliavin derivatives are independent of the representation of  $F$. Indeed, let $F=f \left( w_{s_{1}}, . . ., w_{s_{n}} \right)$ be the minimal representation of $F$. If $\tilde{f} \left( w_{\tilde{s}_{1}}, . . ., w_{\tilde{s}_{N}} \right)$ is another representation of $F$, then for every $1 \leqslant j \leqslant N$, we have either that there exists $i$ such that $s_i=\tilde{s}_j$ in which case $d_i f=d_j \tilde{f}$, or  for all $i$, $s_i \not= \tilde{s}_j$ in which case $d_j \tilde{f}=0$.

\end{rem}

Before we can formulate the main result, we need to define an analog of the Cameron-Martin subspace.


\begin{definition}\label{d.CameronMartinProcess}
An $\mathcal{F}_{s}$-adapted absolutely continuous  $\mathcal{H}_{x}$-valued process $\left\{ \gamma(s)\right\}_{0\leqslant s \leqslant 1}$ such that $\gamma(0)=0$ and $\mathbb{E}_x \left( \int_0^1 \| \gamma^{\prime}(s) \|_{\mathcal{H}}^2 ds \right)<\infty$ will be called a \emph{horizontal Cameron-Martin process}. 
\end{definition}

\begin{definition}\label{admissible}
Suppose $\left\{ v(s) \right\}_{0\leqslant s \leqslant 1}$ is an $\mathcal{F}_{s}$-adapted  $T_x\M$-valued continuous semimartingale such that $v(0)=0$ and $\mathbb{E}_x \left( \int_0^1 \| v(s) \|^2 ds \right)<\infty$. We call $\left\{ v(s) \right\}_{0\leqslant s \leqslant 1}$ a \emph{tangent process} if the process
 \[
 v(s)-\int_0^s (\widehat{\Theta}_r^{\varepsilon})^{-1} T ( \widehat{\Theta}_sr^{\varepsilon} \circ dB_r, \widehat{\Theta}_r^{\varepsilon} v(r))
 \]
is a horizontal Cameron-Martin process. 
\end{definition}

\begin{remark}
By Remark \ref{r.EpsilonInd} the stochastic parallel transport $\widehat{\Theta}_s^{\varepsilon}$ is independent of $\varepsilon$, therefore the notion of a tangent process is itself independent of $\varepsilon$ as well.

\end{remark}

\begin{remark}
As the torsion $T$ is a vertical tensor, then an $\mathcal{F}_{s}$-adapted $T_x\M$-valued continuous semimartingale $\left\{v(s)\right\}_{0\leqslant s \leqslant 1}$ such that

\[
\mathbb{E}_x \left( \int_0^1 \| v(s) \|^2 ds \right)<\infty, \hskip0.1in v(0)=0
\]
is in $T W_\mathcal{H}(\mathbb{M})$ if and only if
\begin{enumerate}
\item The horizontal part $v_\mathcal{H}$ is a horizontal Cameron-Martin process;
\item The vertical part $v_\mathcal{V}$ is given by
\[
v_\mathcal{V} (s)=\int_0^s (\widehat{\Theta}_r^{\varepsilon})^{-1} T ( \widehat{\Theta}_r^{\varepsilon} \circ dB_r, \widehat{\Theta}_r^{\varepsilon} v_\mathcal{H} (r)).
\]
\end{enumerate}

\end{remark}
The main results of this section are the following two theorems.

\begin{thm}[Integration by parts for the damped Malliavin derivative] \label{IBP} { \ }
Suppose $F \in \mathcal{F}C^{\infty} \left( W_x (\M)\right)$ and $\gamma$ is a tangent process, then

\begin{align}\label{JHKL}
\mathbb{E}_x\left( \int_0^1\langle \widetilde{D}_s^{\varepsilon}F,\widehat{\Theta}^\varepsilon_{s} \gamma^{\prime}(s)\rangle ds \right)=\mathbb{E}_x\left( F \int_0^1\langle \gamma^{\prime}(s), dB_s\rangle_{\mathcal{H}}\right).
\end{align}

\end{thm}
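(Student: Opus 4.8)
\textbf{Proof plan for Theorem \ref{IBP}.}

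The plan is to reduce the statement to a classical integration by parts formula on Euclidean Wiener space by pushing the computation down to the driving Brownian motion $B$ via the stochastic parallel transport $\widehat{\Theta}^\varepsilon$, together with the Weitzenb\"ock formula of Theorem \ref{Weitzenbock} which governs the behavior of the damped parallel transport $\tau^\varepsilon$. First I would record, using Remark \ref{r.EpsilonInd}, that $B_s=\int_0^s(\widehat{\Theta}^\varepsilon_r)^{-1}\circ dW_r$ is a genuine $\mathcal{H}_x$-valued Brownian motion and that its natural filtration coincides with $\mathcal{F}_s$. Then, for a cylinder function $F=f(W_{s_1},\dots,W_{s_n})$ written in its minimal representation, I would express $\langle\widetilde D_s^\varepsilon F,\widehat\Theta^\varepsilon_s\gamma'(s)\rangle$ explicitly: since $\widetilde D_s^\varepsilon F=\sum_i \mathbf 1_{[0,s_i]}(s)(\tau_s^\varepsilon)^{-1}\tau_{s_i}^\varepsilon d_i f$, the left-hand side of \eqref{JHKL} becomes $\sum_i \mathbb E_x\!\left(\int_0^{s_i}\langle (\tau_s^\varepsilon)^{-1}\tau_{s_i}^\varepsilon d_i f,\widehat\Theta^\varepsilon_s\gamma'(s)\rangle\,ds\right)$. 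The key algebraic identity is that $(\tau^\varepsilon)^{-1}\widehat\Theta^\varepsilon$ satisfies an ODE whose generator is exactly the potential term $\tfrac12(\tfrac1\varepsilon\mathbf J^2-\tfrac1\varepsilon\delta_\mathcal H T+\mathfrak{Ric}_\mathcal H)$ appearing in \eqref{multiplicative function M_t}, which by the Weitzenb\"ock formula \eqref{pol} is precisely the curvature correction needed so that $\tau^\varepsilon$ intertwines the semigroup $P_s$ with its action on one-forms.

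The heart of the argument is then to use the martingale representation / Clark--Ocone-type identity for the damped derivative. Following the method of \cite{BaudoinFeng2016}, I would introduce the $\mathcal{H}_x$-valued martingale $N_s=\mathbb E_x(F\mid\mathcal F_s)$ and show, using the Markov property of $W$, the derivative formula for $P_s$ coming from Lemma \ref{l.5.4} (the stochastic representation $dP_sf=\mathbb E(\tau^\varepsilon_s df(W_s))$-type identity), and the tower property, that the Malliavin derivative of $F$ in the sense of Euclidean Wiener space on $(W_0(\mathbb R^n),\mathcal B,\mu_\mathcal H)$ along a Cameron--Martin direction $k$ equals $\mathbb E_x\!\left(\int_0^1\langle \widetilde D_s^\varepsilon F,\widehat\Theta^\varepsilon_s k'(s)\rangle\,ds\right)$ when $k$ is the horizontal Cameron--Martin process associated to the tangent process $\gamma$ via $\gamma=k+\int_0^\cdot(\widehat\Theta^\varepsilon)^{-1}T(\widehat\Theta^\varepsilon\circ dB,\widehat\Theta^\varepsilon k)$. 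Here the crucial point is that the vertical part of $\gamma$ is built precisely from the torsion in the way that makes $\widehat\Theta^\varepsilon\gamma$ a legitimate variation of $W$ preserving horizontality; because $T$ is a vertical tensor and $d_if$ is evaluated against $\widehat\Theta^\varepsilon_{s_i}\gamma(s_i)$, the torsion contribution can be absorbed correctly. Then I would apply the classical Bismut/Cameron--Martin integration by parts formula on $(W_0(\mathbb R^n),\mu_\mathcal H)$: for a Cameron--Martin path $k$, $\mathbb E(\mathcal D_k F)=\mathbb E\!\left(F\int_0^1\langle k'(s),dB_s\rangle\right)$, which gives the right-hand side of \eqref{JHKL}.

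The main obstacle I anticipate is justifying the passage from the damped Malliavin derivative $\widetilde D^\varepsilon$ to the true Malliavin derivative on $\mathbb R^n$-Wiener space, i.e.\ verifying that the deterministic/stochastic "damping" encoded by $\mathcal M^\varepsilon$ is exactly what converts the a priori path-dependent expression $\sum_i\langle(\tau_s^\varepsilon)^{-1}\tau_{s_i}^\varepsilon d_if,\cdot\rangle$ into the Fr\'echet derivative of $F$ along $\widehat\Theta^\varepsilon k'$. This requires the Weitzenb\"ock identity \eqref{pol} in the form that $\square_\varepsilon$ is the generator intertwined by $\tau^\varepsilon$, plus a commutation argument between $\tau^\varepsilon$ and $\widehat\Theta^\varepsilon$ using that $\widehat\Theta^\varepsilon$ is $\varepsilon$-independent and horizontal (Remark \ref{r.EpsilonInd}). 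A secondary technical point is the integrability: the hypotheses $\mathbb E_x(\int_0^1\|\gamma'\|^2ds)<\infty$ together with compactness of $\M$ (hence boundedness of curvature, $\mathbf J^2$, $\delta_\mathcal H T$, and all the relevant tensors) guarantee that $\mathcal M^\varepsilon_s$, $\Theta^\varepsilon_s$, $\widehat\Theta^\varepsilon_s$ are bounded operators and all stochastic integrals are genuine $L^2$-martingales, so the formal manipulations and the application of the Euclidean integration by parts are legitimate; I would carry out these estimates at the end. It suffices to prove the identity for $F$ in the dense class $\mathcal F C^\infty(W_x(\M))$, and for such $F$ the minimal representation ensures the derivative expressions are well defined independently of the representation (Remark \ref{jkmn}).
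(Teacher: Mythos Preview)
Your proposal gathers the right ingredients (the Weitzenb\"ock identity, the semigroup derivative formula of Lemma~\ref{l.5.4}, the Markov property), but the organizing strategy --- reduce to the Euclidean Cameron--Martin formula by identifying the left-hand side of \eqref{JHKL} with the Euclidean Malliavin derivative of $F$ along $k$ --- has a gap at exactly the place you flag as ``the main obstacle''. The damped derivative $\widetilde D^\varepsilon$ is \emph{not} the pullback of the Euclidean Malliavin derivative through the horizontal It\^o map: the multiplicative functional $\mathcal M^\varepsilon$ introduces a zero-order curvature term that has no counterpart in the chain rule for $W$ as a functional of $B$. So the statement ``the Malliavin derivative of $F$ in the sense of Euclidean Wiener space along $k$ equals $\mathbb E_x\bigl(\int_0^1\langle\widetilde D_s^\varepsilon F,\widehat\Theta^\varepsilon_s k'(s)\rangle\,ds\bigr)$'' is, after applying Euclidean IBP to its left side, precisely \eqref{JHKL}; it cannot be established independently by the algebraic ODE for $(\tau^\varepsilon)^{-1}\widehat\Theta^\varepsilon$ alone. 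The Weitzenb\"ock formula tells you that $\tau^\varepsilon$ intertwines $\frac12 L$ on functions with $\frac12\square_\varepsilon$ on one-forms, but that is a statement about the \emph{semigroup}, not about individual Cameron--Martin variations of $B$.

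The paper's proof avoids this identification altogether and proceeds by a direct induction on the number $n$ of partition times. The base case (Lemma~\ref{IPP}) uses the one-form-valued martingale $N_s=\tau^\varepsilon_s(dP_{1-s}f)(W_s)$ --- not the real-valued martingale $\mathbb E_x(F\mid\mathcal F_s)$ you introduce --- together with the It\^o isometry to compute $\mathbb E_x\bigl(f(W_1)\int_0^1\langle\gamma',dB\rangle\bigr)$ directly; the damping in $\tau^\varepsilon$ enters exactly so that $N_s$ is a martingale, which is where the Weitzenb\"ock formula is actually used. The inductive step conditions on $\mathcal F_{s_1}$, invokes the Markov property, and uses the auxiliary Proposition~\ref{gradient of expectation formula} (itself proved by the same kind of induction) giving $d\,\mathbb E_x(F)=\mathbb E_x\bigl(\sum_i\tau^\varepsilon_{s_i}d_i f\bigr)$. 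If you try to flesh out your Clark--Ocone step rigorously you will find yourself reproducing this induction; I would recommend reorganizing the argument along those lines.
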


\begin{thm}[Integration by parts for the directional derivatives]\label{IBP2} { \ }
Suppose  $F \in \mathcal{F}C^{\infty} \left( W_x (\M)\right)$ and $v$ is a tangent process, then

\begin{align*}
  \mathbb{E}_x\left( \mathbf{D}_v F \right)
 =\mathbb{E}_x \left( F \int_0^1 \left\langle v_\mathcal{H}'(s)+ \frac{1}{2} (\widehat{\Theta}_s^{\varepsilon})^{-1} \mathfrak{Ric}_{\mathcal{H}}   \widehat{\Theta}_s^{\varepsilon} v_{\mathcal{H}} (s), dB_t\right\rangle_{\mathcal{H}}  \right).
\end{align*}

\end{thm}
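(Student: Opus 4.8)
The plan is to derive Theorem \ref{IBP2} from Theorem \ref{IBP} by "filtering out" the redundant noise coming from the torsion of $\nabla^\varepsilon$, using the fact that the adjoint connection $\widehat{\nabla}^\varepsilon$ is horizontal. First I would unpack both sides. On the left, for $v$ a tangent process we write $v = v_\mathcal{H} + v_\mathcal{V}$ with $v_\mathcal{V}(s) = \int_0^s (\widehat{\Theta}_r^{\varepsilon})^{-1} T(\widehat{\Theta}_r^{\varepsilon}\circ dB_r, \widehat{\Theta}_r^{\varepsilon} v_\mathcal{H}(r))$, and $v_\mathcal{H}$ a horizontal Cameron–Martin process. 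The key point is that $\mathbf{D}_v F$ pairs $d_i f$ against $\widehat{\Theta}_{s_i}^{\varepsilon} v(s_i)$, and since $\widehat{\Theta}^\varepsilon$ preserves the horizontal/vertical splitting, the horizontal component of this is $\widehat{\Theta}_{s_i}^{\varepsilon} v_\mathcal{H}(s_i)$ and the vertical component is $\widehat{\Theta}_{s_i}^{\varepsilon} v_\mathcal{V}(s_i)$.

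Next I would relate $\mathbf{D}_v F$ to the damped Malliavin derivative. The natural route is to take $\gamma$ to be the tangent process whose horizontal part is $v_\mathcal{H}$ (so $\gamma = v$ as tangent processes, since the vertical part is then forced), and to express $\mathbb{E}_x(\mathbf{D}_v F)$ in terms of $\mathbb{E}_x\big(\int_0^1 \langle \widetilde{D}_s^\varepsilon F, \widehat{\Theta}_s^\varepsilon \gamma'(s)\rangle\, ds\big)$. This requires the standard identity, proved as in \cite[Section 4]{ElworthyLeJanLiBook1999} or \cite{Baudoin2017b}, relating the damped parallel transport $\tau^\varepsilon$ to the derivative of the heat flow; concretely, one uses $\tau^\varepsilon = \mathcal{M}^\varepsilon \Theta^\varepsilon$ together with the Weitzenböck formula \eqref{pol} to show that $s \mapsto \mathbb{E}_x(\tau_s^\varepsilon\, df(W_s) \mid \mathcal{F}_t)$ type martingale property holds, and then a summation by parts over the partition points $s_1 < \dots < s_n$ converts $\mathbf{D}_v F$ into an $\int_0^1$ expression against $\gamma'$. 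The subtlety is that $\tau^\varepsilon$ is built from $\nabla^\varepsilon$ (which is not horizontal) while $\mathbf{D}_v$ is built from $\widehat{\Theta}^\varepsilon$; reconciling these is exactly where one invokes Lemma \ref{ricci adjoint} and the orthogonal invariance of the horizontal Brownian motion (the analogue of Lemma \ref{IPP2}/Lemma \ref{IPP3}), which says that the anti-development Brownian motion $B_s = \int_0^s (\widehat{\Theta}_r^\varepsilon)^{-1}\circ dW_r$ and its filtration are unchanged if one adds to $dB$ any vertical correction of the form $T(\widehat{\Theta}^\varepsilon\circ dB, \cdot)$ — this is what lets the contribution of $v_\mathcal{V}$ be absorbed and produces the clean formula with only $v_\mathcal{H}$ appearing.

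Then, applying Theorem \ref{IBP} with this $\gamma$, the right-hand side becomes $\mathbb{E}_x\big(F\int_0^1 \langle \gamma'(s), dB_s\rangle_\mathcal{H}\big) = \mathbb{E}_x\big(F\int_0^1 \langle v_\mathcal{H}'(s), dB_s\rangle_\mathcal{H}\big)$, and the remaining task is to account for the discrepancy between $\widetilde{D}^\varepsilon$ (damped, with the $\mathcal{M}^\varepsilon$ multiplicative factor carrying $\tfrac1\varepsilon\mathbf{J}^2 - \tfrac1\varepsilon\delta_\mathcal{H}T + \mathfrak{Ric}_\mathcal{H}$) and the plain $\widehat{\Theta}^\varepsilon$ appearing in $\mathbf{D}_v$. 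Using $\tau^\varepsilon = \mathcal{M}^\varepsilon \Theta^\varepsilon$, \eqref{multiplicative function M_t}, Lemma \ref{ricci adjoint} identifying the potential as $\widehat{\mathfrak{Ric}}_\mathcal{H}^\varepsilon$ restricted appropriately, and Remark \ref{r.EpsilonInd} (so that $\widehat{\Theta}^\varepsilon$ is actually $\varepsilon$-independent and equals the Bott parallel transport), an integration by parts in the time variable for the ODE \eqref{multiplicative function M_t} converts the $\mathcal{M}^\varepsilon$-correction into the extra drift term $\tfrac12 (\widehat{\Theta}_s^\varepsilon)^{-1}\mathfrak{Ric}_\mathcal{H}\widehat{\Theta}_s^\varepsilon v_\mathcal{H}(s)$ inside the $dB$-integral; here one must check that the $\tfrac1\varepsilon\mathbf{J}^2$ and $\tfrac1\varepsilon\delta_\mathcal{H}T$ pieces either cancel against the torsion contribution coming from $v_\mathcal{V}$ or, being vertical in nature, drop out after pairing with the horizontal noise $B$. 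This cancellation, ensuring the final answer is $\varepsilon$-independent and only $\mathfrak{Ric}_\mathcal{H}$ survives, is the main obstacle and the technical heart of the argument; I would organize it by first establishing the identity at the level of the martingale $N_s := \tau_s^\varepsilon\, df(W_s)$ and then projecting onto the horizontal bundle using that $\widehat{\nabla}^\varepsilon$ is horizontal.
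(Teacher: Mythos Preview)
Your proposal identifies the right ingredients --- Theorem \ref{IBP}, the orthogonal-invariance Lemma \ref{IPP2}, Lemma \ref{ricci adjoint}, and the factorization $\tau^\varepsilon=\mathcal M^\varepsilon\Theta^\varepsilon$ --- and the overall philosophy (filter out the torsion noise so that only $\mathfrak{Ric}_\mathcal H$ survives) is exactly the paper's. The execution, however, differs in a way worth noting.

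The paper does \emph{not} deduce Theorem \ref{IBP2} from the full Theorem \ref{IBP} and then ``account for the discrepancy'' between $\widetilde D^\varepsilon$ and $\widehat\Theta^\varepsilon$. Instead it proves the one-point case $F=f(W_1)$ directly (Lemma \ref{IPP4}) and then runs the same Markovian induction on $n$ as for Theorem \ref{IBP}. The heart of Lemma \ref{IPP4} is an explicit semimartingale identity: for a tangent process $v$ with $h:=v_\mathcal H$, one computes
\[
v(s)=(\widehat\Theta_s^{\varepsilon})^{-1}\tau_s^{\varepsilon,*}\int_0^s(\tau_r^{\varepsilon,*})^{-1}\widehat\Theta_r^{\varepsilon}\!\left(dh(r)+\tfrac12(\widehat\Theta_r^{\varepsilon})^{-1}\mathfrak{Ric}_\mathcal H\,\widehat\Theta_r^{\varepsilon}h(r)\,dr+\mathcal O_r\,dB_r-\tfrac12 T^\varepsilon_{\mathcal O_r}\,dr\right),
\]
where $\mathcal O_r=\tfrac1\varepsilon(\widehat\Theta_r^{\varepsilon})^{-1}J_{\widehat\Theta_r^{\varepsilon}v(r)}\widehat\Theta_r^{\varepsilon}$ is skew-symmetric on $\mathcal H_x$. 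Pairing with $df(W_1)$ and taking expectation, the Cameron--Martin piece is handled by Lemma \ref{IPP} and the $(\mathcal O_r\,dB_r-\tfrac12 T^\varepsilon_{\mathcal O_r}\,dr)$ piece vanishes by Lemma \ref{IPP2}. This is where the $\tfrac1\varepsilon\mathbf J^2$ and $\tfrac1\varepsilon\delta_\mathcal H T$ terms disappear: they are absorbed into the Stratonovich-to-It\^o correction that produces $T^\varepsilon_{\mathcal O}$, not into a separate ``projection onto horizontal noise'' step.

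Your route --- apply Theorem \ref{IBP} with $\gamma=v_\mathcal H$ and then correct via an integration by parts in time against the ODE \eqref{multiplicative function M_t} --- could in principle be made to work, but the ``discrepancy'' you would have to control is precisely the content of the identity above; without writing it down, the cancellation of the $\varepsilon$-dependent terms remains opaque. Also, a small correction: in Theorem \ref{IBP} the process $\gamma$ is effectively a horizontal Cameron--Martin process (as in Lemma \ref{IPP}), so ``$\gamma=v$'' does not parse --- you mean $\gamma=v_\mathcal H$, and the vertical part of $v$ enters only through the explicit formula for $v(s)$, not through $\gamma'$.
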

Even though these two integration by parts formulas seem similar, they are quite different in nature. The damped derivative is used to derive gradient bounds and functional inequalities on the path space (e.g. \cite{Baudoin2017b, BaudoinFeng2016}). The directional derivative, however, is more related to quasi-invariance properties such as in Section \ref{s.QI}, and the expression

\[
\int_0^1 \left\langle v_\mathcal{H}^{\prime}(s)+ \frac{1}{2} (\widehat{\Theta}_s^{\varepsilon})^{-1} \mathfrak{Ric}_{\mathcal{H}}   \widehat{\Theta}_s^{\varepsilon} v_{\mathcal{H}}(s), dB_s\right\rangle_{\mathcal{H}}
\]
can  be viewed as a horizontal divergence on the path space.

The remainder of the section is devoted to proving Theorem \ref{IBP} and Theorem \ref{IBP2}. We adapt the techniques from the Markovian stochastic calculus developed by Fang-Malliavin \cite{FangMalliavin1993} and E.~Hsu \cite{Hsu1997c} in the Riemannian case to our setting.

\subsubsection{Gradient formula}
In this preliminary section we recall the gradient formula for the semigroup $P_s$. In the case the Yang-Mills condition is satisfied, that is,  the horizontal divergence $\delta_\mathcal{H} T=0$, the operator $\square_\varepsilon$ is essentially self-adjoint on $L^2\left( \M, g_\varepsilon, \mu \right)$ equipped with  inner product on sections, i.e. $\int_\M \langle\cdot,\cdot\rangle_{\varepsilon}d\mu$, and the gradient representation  was first proved in \cite{Baudoin2017b}.

\begin{lem}[Theorem 4.6 and Corollary 4.7  in \cite{Baudoin2017b}, Theorem 2.7 in \cite{Grong}]
\label{l.5.4}
For  $f \in C^\infty(\M)$, the process
\begin{equation}\label{e.5.11}
N_s=\tau^\varepsilon_s(dP_{1-s} f) (W_s), \quad 0 \leqslant s \leqslant 1,
\end{equation}
is a  martingale, where $dP_{1-s} f$ denotes the exterior derivative of the function $P_{1-s} f$. As a consequence, for every $0 \leqslant s \leqslant 1$,
\begin{align}\label{rep}
dP_sf (x)=\mathbb{E}_x ( \tau^\varepsilon_s df (W_s) ).
\end{align}
\end{lem}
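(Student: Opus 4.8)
The strategy is the classical one from Elworthy--Li/Bismut theory: show that $N_s$ is a local martingale by computing its Stratonovich (then Itô) differential and checking that the drift vanishes, and then upgrade ``local martingale'' to ``true martingale'' using compactness of $\M$. First I would fix a smooth one-form $\alpha = dP_{1-s}f$ along the horizontal Brownian motion $\{W_s\}$ and recall the defining covariant Stratonovich SDE \eqref{tau_t} for $\tau^\varepsilon_s$, namely
\[
d[\tau^\varepsilon_s \alpha(W_s)] = \tau^\varepsilon_s\left( \nabla_{\circ dW_s} - \mathfrak{T}^\varepsilon_{\circ dW_s} - \tfrac12\Big(\tfrac1\varepsilon \mathbf{J}^2 - \tfrac1\varepsilon \delta_\mathcal{H}T + \mathfrak{Ric}_\mathcal{H}\Big)ds\right)\alpha(W_s).
\]
Then I would apply this with the time-dependent one-form $\alpha_s = dP_{1-s}f$. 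Differentiating $N_s = \tau^\varepsilon_s (dP_{1-s}f)(W_s)$ produces three contributions: the $\partial_s$-in-time term $-\tau^\varepsilon_s\, d(\tfrac{\partial}{\partial s}P_{1-s}f)(W_s)\,ds = -\tfrac12 \tau^\varepsilon_s\, d(LP_{1-s}f)(W_s)\,ds$; the covariant-transport term along $W$; and the curvature/zeroth-order correction term from \eqref{tau_t}. Converting the Stratonovich transport term into Itô form generates exactly the operator $\square_\varepsilon$ acting on $dP_{1-s}f$ (this is the content of how the one-form sub-Laplacian in \eqref{def} is assembled from the covariant derivatives along a horizontal frame, the $\mathfrak{T}^\varepsilon$ terms, and the Itô correction $\tfrac12\sum_i \nabla_{X_i}^2$-type terms, together with the $-\tfrac1\varepsilon\mathbf{J}^2 + \tfrac1\varepsilon\delta_\mathcal H T - \mathfrak{Ric}_\mathcal{H}$ potential).

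The key cancellation is then the Weitzenb\"ock identity of Theorem \ref{Weitzenbock}: $dLf = \square_\varepsilon df$ for every smooth $f$, applied to $f \rightsquigarrow P_{1-s}f$. This gives $\tfrac12\square_\varepsilon(dP_{1-s}f) = \tfrac12 d(LP_{1-s}f)$, which exactly matches the time-derivative term with the opposite sign, so that the $ds$-drift of $N_s$ is zero. What remains is a stochastic integral against $dW_s$ (equivalently against the anti-development Brownian motion $B_s = \int_0^s (\widehat\Theta^\varepsilon_r)^{-1}\circ dW_r$), so $N_s$ is a local martingale. Since $\M$ is compact, $dP_{1-s}f$ is uniformly bounded on $[0,1]\times\M$ (by parabolic smoothing estimates for the subelliptic semigroup, or simply since $P_s$ is a contraction on $C^0$ and gradient bounds hold), and $\tau^\varepsilon_s$ is controlled by the ODE \eqref{multiplicative function M_t} whose coefficient is a bounded tensor on the compact manifold, so $\mathcal M^\varepsilon_s$ has moments of all orders and $N_s$ is bounded in $L^2$; hence $N_s$ is a genuine martingale. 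Evaluating $\mathbb{E}_x(N_1) = \mathbb{E}_x(N_0)$ gives $\mathbb{E}_x(\tau^\varepsilon_1 df(W_1)) = dP_1 f(x)$; running the same argument on $[0,s]$ with $P_{s-r}f$ in place of $P_{1-r}f$ yields \eqref{rep}.

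The main obstacle, I expect, is the bookkeeping in the Stratonovich-to-It\^o conversion showing that the transport term together with the potential term in \eqref{tau_t} reproduces precisely $\square_\varepsilon$ acting on the one-form $dP_{1-s}f$ as written in \eqref{def}. This is a direct but somewhat involved computation with a local horizontal orthonormal frame $X_1,\dots,X_n$, using that $dW_s = \sum_i (\text{frame components})$ and that the quadratic variation $d\langle W^i, W^j\rangle_s = \delta_{ij}\,ds$ in the parallel frame, and keeping careful track of the $\mathfrak{T}^\varepsilon$ and connection terms. Since this identification is essentially the one already established in the cited references \cite{Baudoin2017b, BaudoinKimWang2016, Grong}, I would present the drift computation schematically and invoke those works for the precise frame-level verification, then give the martingale-property and boundedness argument in full.
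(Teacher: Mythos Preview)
Your proposal is correct and follows essentially the same approach as the paper's proof: compute $dN_s$ via the defining SDE \eqref{tau_t} for $\tau^\varepsilon$, use the Weitzenb\"ock identity (Theorem \ref{Weitzenbock}) to identify the time-derivative term $\frac{d}{ds}(dP_{1-s}f)=-\tfrac12\square_\varepsilon dP_{1-s}f$ with the negative of the drift coming from the Stratonovich-to-It\^o conversion, conclude that $N_s$ is a local martingale, and then invoke boundedness on the compact manifold (the paper cites \cite[Lemma 4.3]{Baudoin2017b} and \cite[Theorem 2.7]{GrongThalmaier2016a} for this step, while you spell out the argument via the ODE \eqref{multiplicative function M_t} for $\mathcal{M}^\varepsilon$). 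The only slip is a stray minus sign in your intermediate expression for the $\partial_s$-term (you write $-\tau^\varepsilon_s\,d(\partial_s P_{1-s}f)\,ds$ where the correct contribution is $+\tau^\varepsilon_s\,d(\partial_s P_{1-s}f)\,ds$), but your final value $-\tfrac12\tau^\varepsilon_s\,dLP_{1-s}f\,ds$ and the subsequent cancellation are correct.
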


\begin{proof}
From It\^o's formula and the definition of $\tau^\varepsilon$, we have
\begin{align*}
& dN_s=
\\
& \tau^\varepsilon_s \left( \nabla_{\circ dW_s}-\mathfrak{T}_{\circ dW_s}^{\varepsilon}-\frac{1}{2} \left( \frac{1}{ \varepsilon}\mathbf{J}^2-\frac{1}{\varepsilon} \delta_\mathcal{H} T + \mathfrak{Ric}_{\mathcal{H}}\right)ds\right) (dP_{1-s} f) (W_s)
\\
& +\tau^\varepsilon_s \frac{d}{ds} (dP_{1-s} f) (W_s) ds.
\end{align*}
We now see that
\begin{align*}
\frac{d}{ds} (dP_{1-s} f) =-\frac{1}{2} dP_{1-s} Lf=-\frac{1}{2}dLP_{1-s} f=-\frac{1}{2} \square_\varepsilon dP_{1-s}f,
\end{align*}
where we used Theorem \ref{Weitzenbock}. Observe that the bounded variation part of \[
\tau^\varepsilon_s \left( \nabla_{\circ dW_s}-\mathfrak{T}_{\circ dW_s}^{\varepsilon}-\frac{1}{2} \left( \frac{1}{ \varepsilon}\mathbf{J}^2-\frac{1}{\varepsilon} \delta_\mathcal{H} T+
\mathfrak{Ric}_{\mathcal{H}}\right)ds\right) (dP_{1-s} f) (W_s)
\]
is given by $\frac{1}{2} \tau^\varepsilon_s \square_\varepsilon dP_{1-s}f (W_s) ds$ which cancels out with the expression
\[
\tau^\varepsilon_s \frac{d}{ds} (dP_{1-s} f) (W_s) ds
\]
in the first equation. The martingale property follows from a bound  similar to  \cite[Lemma 4.3]{Baudoin2017b} or \cite[Theorem 2.7]{GrongThalmaier2016a, GrongThalmaier2016b}.
\end{proof}

\subsubsection{Integration by parts formula for the damped Malliavin derivative}

We prove Theorem \ref{IBP} in this section. Some of the key arguments may be found in \cite{Baudoin2017b, BaudoinFeng2016}, however since our framework is more general here (for example, we do not assume the Yang-Mills condition that the horizontal divergence $\delta_\mathcal{H} T=0$) and we now use the adjoint connection $\widehat{\nabla}^\ve$ instead of the Bott connection, for the sake of self-containment, we give a complete proof.

\begin{lem}\label{IPP}
For $f \in C^\infty(\M)$, and $\gamma$ horizontal Cameron-Martin process
\begin{align*}
& \mathbb{E}_x \left( f(W_1) \int_0^1 \langle \gamma'(s),dB_s\rangle_{\mathcal{H}}  \right)=
\\
& \mathbb{E}_x \left(\left\langle  \tau^\varepsilon_1 df(W_1),\int_0^1  (\tau^{\varepsilon,*}_s)^{-1}  \widehat{\Theta}^\varepsilon_{s} \gamma'(s) ds  \right\rangle \right).
\end{align*}
 \end{lem}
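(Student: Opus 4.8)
The plan is to realize the left-hand side as the terminal value of a martingale and then integrate by parts in the Malliavin sense against the Brownian motion $B$ obtained as the anti-development of $W$ with respect to the Bott connection. First I would recall from Lemma \ref{l.5.4} that the process $N_s = \tau^\varepsilon_s (dP_{1-s}f)(W_s)$ is a martingale with $N_0 = dP_1 f(x)$ and $N_1 = \tau^\varepsilon_1 df(W_1)$. Writing $f(W_1) = P_0 f(W_1)$ and using the Markov property, the key observation is that $\mathbb{E}_x\big(f(W_1) \mid \mathcal{F}_s\big) = (P_{1-s}f)(W_s)$, so $d(P_{1-s}f)(W_s) = \langle dP_{1-s}f(W_s), \circ dW_s\rangle$ has a martingale part whose covariation with $B$ can be read off. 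Concretely, since $dW_s = \widehat{\Theta}^\varepsilon_s \circ dB_s$ (because $B_s = \int_0^s (\widehat{\Theta}^\varepsilon_r)^{-1}\circ dW_r$), the It\^o differential of $(P_{1-s}f)(W_s)$ is $\langle \widehat{\Theta}^{\varepsilon,*}_s \, dP_{1-s}f(W_s), dB_s\rangle_{\mathcal{H}} = \langle (\widehat{\Theta}^\varepsilon_s)^{-1} (dP_{1-s}f(W_s))^\sharp, dB_s\rangle_{\mathcal{H}}$ up to drift, using that $\widehat{\Theta}^\varepsilon$ is an isometry preserving $\mathcal{H}$.

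Next I would apply the elementary duality/integration-by-parts identity for the Wiener integral: if $Y$ is a square-integrable random variable with martingale representation $Y = \mathbb{E}_x(Y) + \int_0^1 \langle \xi_s, dB_s\rangle_{\mathcal{H}}$ and $\gamma$ is a horizontal Cameron-Martin process, then
\[
\mathbb{E}_x\left( Y \int_0^1 \langle \gamma'(s), dB_s\rangle_{\mathcal{H}}\right) = \mathbb{E}_x\left( \int_0^1 \langle \xi_s, \gamma'(s)\rangle_{\mathcal{H}}\, ds\right).
\]
Applying this with $Y = f(W_1)$, whose integrand $\xi_s$ is exactly the martingale part of $N_s$ after unwinding $\tau^\varepsilon_s = \mathcal{M}^\varepsilon_s \Theta^\varepsilon_s$, I would identify $\xi_s$ in terms of $\tau^\varepsilon_s df(W_1)$. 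The point is that $N_1 = \tau^\varepsilon_1 df(W_1)$ and, because $N$ is a martingale, $N_1 = N_0 + \int_0^1 dN_s$ where $dN_s = \tau^\varepsilon_s (\text{covariant martingale differential})$; transporting back through $\tau^\varepsilon_s$ one gets that the $B$-integrand of the scalar martingale $\langle \tau^\varepsilon_1 df(W_1), \eta\rangle$ for a fixed covector $\eta$ involves $\langle (\tau^{\varepsilon,*}_s)^{-1}\widehat{\Theta}^\varepsilon_s(\cdot), \cdot\rangle$; matching this against $\gamma'$ and pairing with $df(W_1)$ yields the stated right-hand side $\mathbb{E}_x\big(\langle \tau^\varepsilon_1 df(W_1), \int_0^1 (\tau^{\varepsilon,*}_s)^{-1}\widehat{\Theta}^\varepsilon_s \gamma'(s)\, ds\rangle\big)$.

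The main obstacle I anticipate is the bookkeeping of adjoints and the interplay between the three transports $\widehat{\Theta}^\varepsilon$ (Bott parallel transport), $\Theta^\varepsilon$ (parallel transport for $\nabla^\varepsilon$), and the damping $\mathcal{M}^\varepsilon$, together with the musical isomorphisms relating one-forms and vectors — one must be careful that $\widehat{\Theta}^\varepsilon$ preserves $\mathcal{H}$ (so that $\gamma'$ stays horizontal along the transport) and that the extra torsion term $\mathfrak{T}^\varepsilon$ and the zeroth-order term in the definition \eqref{tau_t} of $\tau^\varepsilon$ contribute only to bounded-variation parts and hence drop out of the covariation with $B$. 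Establishing that the relevant integrands are in $L^2$, so that the martingale representation and the duality identity are legitimate, follows from the bounds already used in the proof of Lemma \ref{l.5.4} (smoothness and boundedness on the compact $\M$) together with the assumption $\mathbb{E}_x\big(\int_0^1 \|\gamma'(s)\|^2_{\mathcal{H}}\, ds\big) < \infty$ built into Definition \ref{d.CameronMartinProcess}.
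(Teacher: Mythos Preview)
Your proposal is correct and follows essentially the same route as the paper: represent $f(W_1)$ via its It\^o martingale increments $\langle dP_{1-s}f(W_s),\widehat{\Theta}^\varepsilon_s\,dB_s\rangle$, apply the It\^o isometry against $\int_0^1\langle\gamma'(s),dB_s\rangle_{\mathcal H}$, rewrite $dP_{1-s}f(W_s)=(\tau^\varepsilon_s)^{-1}N_s$, and then use that $N$ is a martingale. The only point worth sharpening is the last step: rather than expanding $dN_s$, the paper simply uses the tower property $\mathbb E_x[\langle N_s, A_s\rangle]=\mathbb E_x[\langle N_1, A_s\rangle]$ for adapted $A_s=(\tau^{\varepsilon,*}_s)^{-1}\widehat\Theta^\varepsilon_s\gamma'(s)$ and integrates in $s$, which gives the stated right-hand side in one line.
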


 \begin{proof}
Consider again the martingale process $N_s$ defined by \eqref{e.5.11}. We have then for $f \in C^\infty(\M)$
\begin{align*}
&\mathbb{E}_x \left( f(W_s) \int_0^s \langle \gamma'(r), dB_r\rangle_{\mathcal{H}}  \right)=\mathbb{E}_x \left( f(W_s) \int_0^s \langle \widehat{\Theta}^\varepsilon_{r} \gamma'(r),\widehat{\Theta}^\varepsilon_{r}dB_r\rangle_{\mathcal{H}}  \right) \\
 &=\mathbb{E}_x \left( ( f(W_s) -\mathbb{E}_x \left( f(W_s)\right)) \int_0^s \langle \widehat{\Theta}^\varepsilon_{r} \gamma'(r),\widehat{\Theta}^\varepsilon_{r}dB_r\rangle_{\mathcal{H}}  \right) \\
 &=\mathbb{E}_x \left(\int_0^s \langle dP_{s-r}f (W_r), \widehat{\Theta}^\varepsilon_{r} dB_r \rangle  \int_0^s \langle \widehat{\Theta}^\varepsilon_{r} \gamma'(r),\widehat{\Theta}^\varepsilon_{r}dB_r\rangle_{\mathcal{H}}  \right) \\
 & =\mathbb{E}_x \left(\int_0^s \langle dP_{s-r}f (W_r), \widehat{\Theta}^\varepsilon_{r} \gamma'(r) \rangle dr  \right) \\
 &=\mathbb{E}_x \left(\int_0^s \langle \tau_r^\varepsilon dP_{s-r}f (W_r), (\tau^{\varepsilon,*}_r)^{-1}  \widehat{\Theta}^\varepsilon_{r} \gamma'(r) \rangle dr  \right)  \\
 &=\mathbb{E}_x \left(\int_0^s \langle N_r, (\tau^{\varepsilon,*}_r)^{-1}  \widehat{\Theta}^\varepsilon_{r} \gamma'(r) \rangle dr  \right) \\
 &=\mathbb{E}_x \left(\left\langle  N_s,\int_0^s  (\tau^{\varepsilon,*}_r)^{-1} \widehat{\Theta}^\varepsilon_{r} \gamma'(r) dr  \right\rangle \right),
\end{align*}
where we integrated by parts in the last equality.
 \end{proof}

\begin{rem}\label{conditional H}
A similar proof as above actually yields that for $f \in C^\infty(\M)$, $\gamma$ horizontal Cameron-Martin process and $0 \leqslant s \leqslant 1$,
\begin{align*}
& \mathbb{E}_x \left( f(W_1) \int_s^1 \langle \gamma^{\prime}(r), dB_r\rangle_{\mathcal{H}} \mid \mathcal{F}_s \right)=
\\
& \mathbb{E}_x \left(\left\langle  \tau^\varepsilon_1 df(W_1), \int_s^1  (\tau^{\varepsilon,*}_r)^{-1}  \widehat{\Theta}^\varepsilon_{r} \gamma^{\prime}(r) dr  \right\rangle  \mid \mathcal{F}_s \right).
\end{align*}
\end{rem}
Lemma \ref{IPP} shows that integration by parts formula \eqref{JHKL} holds for cylinder functions of the type $F=f(W_s)$. We now turn to the proof of Theorem \ref{IBP} by using induction on $n$ in a representation of a cylinder function $F$. To run the induction argument we need the following fact.

\begin{prop}\label{gradient of expectation formula}
Let  $F=f(W_{s_1}, \cdots, W_{s_n}) \in  \mathcal{F}C^{\infty} \left( W_x (\M)\right)$. We have
\[
d \mathbb{E}_x(F)=\mathbb{E}_x\left(\sum_{i=1}^n\tau_{s_i}^{\varepsilon}d_if(W_{s_1}, \cdots, W_{s_n})\right).
\]
\end{prop}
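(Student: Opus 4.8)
The plan is to prove Proposition \ref{gradient of expectation formula} by induction on the number $n$ of time points in the representation of the cylinder function $F$, using the martingale $N_s$ from Lemma \ref{l.5.4} and the gradient formula \eqref{rep} repeatedly.

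\textbf{Base case.} For $n=1$ we have $F=f(W_{s_1})$ with $f\in C^\infty(\M)$. Then $\mathbb{E}_x(F)=P_{s_1}f(x)$, so by Lemma \ref{l.5.4},
\[
d\mathbb{E}_x(F)=dP_{s_1}f(x)=\mathbb{E}_x\left(\tau^\varepsilon_{s_1}df(W_{s_1})\right),
\]
which is exactly the claimed identity.

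\textbf{Inductive step.} Suppose the formula holds for cylinder functions depending on at most $n-1$ time points. Let $F=f(W_{s_1},\dots,W_{s_n})$ with $0<s_1<\dots<s_n\leqslant 1$. Using the Markov property of the horizontal Brownian motion at time $s_1$, write
\[
\mathbb{E}_x(F)=\mathbb{E}_x\Big(\,\mathbb{E}_{W_{s_1}}\big[f(W_{s_1},W_{s_2-s_1},\dots,W_{s_n-s_1})\big]\Big)=\mathbb{E}_x\big(g(W_{s_1})\big),
\]
where $g(y):=\mathbb{E}_y\big[f(y,W_{s_2-s_1},\dots,W_{s_n-s_1})\big]$, i.e. $g(y)=(Q f_y)(y)$ with $f_y(\cdot):=f(y,\cdot,\dots,\cdot)$ and $Q$ the appropriate $(n-1)$-fold transition operator. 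The map $y\mapsto g(y)$ is smooth by smoothness of $f$ and the heat semigroup, so the base case ($n=1$ applied to $g$) gives
\[
d\mathbb{E}_x(F)=dP_{s_1}g(x)=\mathbb{E}_x\big(\tau^\varepsilon_{s_1}\,dg(W_{s_1})\big).
\]
The differential $dg$ at a point $y$ has two contributions: the derivative with respect to the ``frozen'' first slot $y$ appearing in $f_y$, and the derivative of $y\mapsto (Q f_y)(y)$ through the semigroup. For the first contribution we get $\mathbb{E}_y\big[(d_1 f)(y,W_{s_2-s_1},\dots)\big]$; for the second, applying the inductive hypothesis to the $(n-1)$-point cylinder function $f(y,\cdot,\dots,\cdot)$ evaluated along the shifted Brownian motion started at $y$, together with the fact (Remark \ref{r.EpsilonInd}) that $\widehat\Theta^\varepsilon$ and hence $\tau^\varepsilon$ behave consistently under the Markov shift, gives
\[
\mathbb{E}_y\left(\sum_{i=2}^n \tau^\varepsilon_{s_i-s_1}\,d_if(y,W_{s_2-s_1},\dots,W_{s_n-s_1})\right).
\]
Composing with $\tau^\varepsilon_{s_1}$ and using the multiplicative (cocycle) property of the parallel transport and of the multiplicative functional $\mathcal{M}^\varepsilon$ along the concatenated path, namely $\tau^\varepsilon_{s_1}\circ(\text{shifted }\tau^\varepsilon_{s_i-s_1})=\tau^\varepsilon_{s_i}$ in distribution under the Markov property, and finally re-expressing everything as an expectation over the original path via the tower property, yields
\[
d\mathbb{E}_x(F)=\mathbb{E}_x\left(\sum_{i=1}^n \tau^\varepsilon_{s_i}\,d_if(W_{s_1},\dots,W_{s_n})\right),
\]
completing the induction.

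\textbf{Main obstacle.} The delicate point is the bookkeeping in the inductive step: one must carefully justify that differentiating $y\mapsto g(y)$ produces precisely the two terms described, and that the cocycle identity for $\tau^\varepsilon=\mathcal{M}^\varepsilon\Theta^\varepsilon$ under time-shift (combined with the Markov property) lets one re-assemble $\tau^\varepsilon_{s_1}\cdot\tau^\varepsilon_{s_i-s_1}$ into $\tau^\varepsilon_{s_i}$ under the expectation. This requires that $\Theta^\varepsilon$ satisfies $\Theta^\varepsilon_{s_1}(W)\,\Theta^\varepsilon_{s_i-s_1}(W_{s_1+\cdot})=\Theta^\varepsilon_{s_i}(W)$ and that $\mathcal{M}^\varepsilon$, being the solution of the linear ODE \eqref{multiplicative function M_t} driven by $\Theta^\varepsilon$, inherits the analogous cocycle property; both follow from uniqueness of solutions to the defining (S)DEs but should be stated explicitly. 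An alternative, perhaps cleaner route that avoids differentiating through the Markov kernel is to apply Lemma \ref{IPP} (or rather its proof via the martingale $N_s$) directly with $f$ replaced by the appropriate conditional expectation, differentiating under the expectation and using dominated convergence justified by compactness of $\M$ and the bounds on $\tau^\varepsilon$ from Lemma \ref{l.5.4}; I would present the induction but flag this as the step needing the most care.
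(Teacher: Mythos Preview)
Your proof is correct and follows essentially the same approach as the paper: induction on $n$, the base case from Lemma \ref{l.5.4}, the Markov property at time $s_1$ to reduce to $g(W_{s_1})$, the inductive hypothesis to compute $dg$, and the multiplicative (cocycle) property of $\tau^\varepsilon$ to reassemble the terms. Your discussion of the ``main obstacle'' is even a bit more explicit than the paper about the justification of the cocycle identity for $\tau^\varepsilon=\mathcal{M}^\varepsilon\Theta^\varepsilon$, which the paper simply invokes as ``the multiplicative property of $\tau^\varepsilon$.''
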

\begin{proof}
We will proceed  by induction on $n$. Consider a cylinder function $F=f(W_{s_1}, \cdots, W_{s_n})$. For $n=1$ the statement follows from  Lemma \ref{l.5.4}, which implies that
\[
d\mathbb{E}_x(f(W_{s_1}))=dP_{s_1}f (x)=\mathbb{E}_x ( \tau^\varepsilon_{s_1} df(W_{s_1}) ).
\]
Now we assume that the claim holds for any cylinder function of the form $F=f(W_{s_1}, \cdots, W_{s_k})$ for any $k \leqslant n-1$. By the Markov property we have
\[
\mathbb{E}_x(F)=\mathbb{E}_x(\mathbb{E}( F \mid \mathcal{F}_{s_1} ))=\mathbb{E}_x(g(W_{s_1})),
\]
where $g(y)=\mathbb{E}_y(f(y, W_{s_2-s_1}, \cdots, W_{s_n-s_1}))$.
Therefore
\[
d\mathbb{E}_x(F)=\mathbb{E}(\tau_{s_1}^{\varepsilon}dg(W_{s_1})).
\]
By using the induction hypothesis, we obtain
\begin{align*}
dg(y)&=\mathbb{E}_y(d_1f(y, W_{s_2-s_1}, \cdots, W_{s_n-s_1}))
 +
 \\
 & \mathbb{E}_y\left(\sum_{i=2}^n\tau^{\varepsilon}_{s_i-s_1}d_if(y, W_{s_2-s_1}, \cdots, W_{t_n-t_1})\right) \\
 &=\mathbb{E}_y\left(\sum_{i=1}^n\tau^{\varepsilon}_{s_i-s_1}d_if(y,W_{s_2-s_1}, \cdots, W_{s_n-s_1})\right).
\end{align*}
By the multiplicative property of $\tau^{\varepsilon}$ and the Markov property of $W$ we have
\begin{align*}
&\mathbb{E}_{W_{s_1}}\left(\tau^{\varepsilon}_{s_i-s_1}d_if(y,W_{s_2-s_1}, \cdots, W_{s_n-s_1})\right) =
\\
& (\tau_{s_1}^{\varepsilon})^{-1} \mathbb{E}\left( \tau_{s_i}^{\varepsilon}d_if(W_{s_1}, \cdots, W_{s_n}) \mid \mathcal{F}_{s_1} \right).
\end{align*}
Therefore we conclude
\begin{align*}
& d\mathbb{E}_x(F)=\mathbb{E}_x\left(\sum_{i=1}^n\tau_{s_i}^{\varepsilon}d_if(W_{s_1}, \cdots, W_{s_n})\right).
\end{align*}
\end{proof}

\begin{rem} As expected, the expression
\[
\mathbb{E}_x\left(\sum_{i=1}^n\tau_{s_i}^{\varepsilon}d_if(W_{s_1}, \cdots, W_{s_n})\right)
\]
is independent of the choice of the representation of the cylinder function $F$, as follows from  Remark \ref{jkmn}.
\end{rem}

\begin{proof}[Proof of Theorem \ref{IBP}] We use induction on $n$ in a representation of  the cylinder function $F$ . More precisely,
we would like to show that for any $F=f(W_{s_1}, \cdots, W_{s_n}) \in \mathcal{F}C^{\infty} \left( W_x (\M)\right)$ and $s \leqslant s_{1}$ we have
\begin{align}
&  \mathbb{E}_x\left( \left.F \int_{s}^{s_n}\langle \gamma^{\prime}(r), dB_r \rangle_{\mathcal{H}} \right| \mathcal{F}_{s}\right)=
\label{e.IbPinduction}
 \\
& \mathbb{E}_{x} \left(\sum_{i=1}^n \langle d_if(W_{s_1}, \cdots, W_{s_n}),\tau_{s_i}^{\varepsilon,*} \int_{s}^{s_i} (\tau_r^{\varepsilon,*})^{-1} \gamma'(r) dr \rangle \mid \mathcal{F}_{s}\right). \notag
\end{align}

The case $n=1$ is  Lemma \ref{IPP} and Remark \ref{conditional H}. Assume that \eqref{e.IbPinduction} holds for any cylinder function $F$ represented by a partition of size $n-1$ for $n \geqslant 2$. Let  $F=f(W_{s_1}, \cdots, W_{s_n}) \in \mathcal{F}C^{\infty} \left( W_x (\M)\right)$. We have for $s  \leqslant s_1$,
\begin{align*}
 &\mathbb{E}_x\left( F \int_s^1\langle \gamma^{\prime}(r), dB_r\rangle_{\mathcal{H}} \mid \mathcal{F}_s\right)
 \\
=&\mathbb{E}_x\left( F \int_s^{s_n}\langle \gamma^{\prime}(r), dB_r\rangle_{\mathcal{H}} \mid \mathcal{F}_s \right)
\\
 =&\mathbb{E}_x\left( F \int_s^{s_1}\langle \gamma^{\prime}(r), dB_r\rangle_{\mathcal{H}}  \mid \mathcal{F}_s  \right)+\mathbb{E}_x\left( F \int_{s_1}^{s_n}\langle \gamma^{\prime}(r), dB_r\rangle_{\mathcal{H}}  \mid \mathcal{F}_s  \right)
 \\
 =&\mathbb{E}_x\left( \mathbb{E}_x (F \mid \mathcal{F}_{s_1}) \int_s^{s_1}\langle \gamma^{\prime}(r), dB_r\rangle_{\mathcal{H}}  \mid \mathcal{F}_s  \right)+
 \\
 & \mathbb{E}_x\left( \mathbb{E}_x\left( F \int_{s_1}^{s_n}\langle \gamma^{\prime}(r), dB_r\rangle_{\mathcal{H}} | \mathcal{F}_{s_1}\right) \mid \mathcal{F}_s  \right).
\end{align*}
 By the Markov property we have
\[
 \mathbb{E}_x (F \mid \mathcal{F}_{s_1})=g(W_{s_1}),
\]
where $g(y)=\mathbb{E}_y(f(y, W_{s_2-s_1}, \cdots, W_{s_n-s_1}))$. Thus by Lemma \ref{IPP} and Remark \ref{conditional H}
\begin{align*}
& \mathbb{E}_x\left( \ \mathbb{E}_x (F \mid \mathcal{F}_{s_1}) \int_s^{s_1}\langle \gamma^{\prime}(r), dB_r\rangle_{\mathcal{H}} \mid \mathcal{F}_s \right)
=
\\
& \mathbb{E}_x\left( g(W_{s_1}) \int_s^{s_1}\langle \gamma^{\prime}(r), dB_r\rangle_{\mathcal{H}} \mid \mathcal{F}_s \right)=
\\
&\mathbb{E}_x \left(\left\langle   dg (W_{s_1}), (\tau^\varepsilon_{s_1})^*\int_s^{s_1}  (\tau^{\varepsilon,*}_r)^{-1}  \widehat{\Theta}^\varepsilon_{r}\gamma^{\prime}(r) dr  \right\rangle \mid \mathcal{F}_s \right)
\end{align*}
Now according to Proposition \ref{gradient of expectation formula}
\begin{align*}
dg(y)=\mathbb{E}_y\left(\sum_{i=1}^n\tau^{\varepsilon}_{s_i-s_1}d_if(y, W_{s_2-s_1}, \cdots, W_{s_n-s_1})\right).
\end{align*}
Using the fact that
\begin{align*}
& \mathbb{E}_{W_{s_1}}\left(\tau^{\varepsilon}_{s_i-s_1}d_if(y, W_{s_2-s_1}, \cdots, W_{s_n-s_1}) \right)
 =
 \\
& (\tau_{s_1}^{\varepsilon})^{-1} \mathbb{E}_x\left( \tau_{s_i}^{\varepsilon}d_if(W_{s_1}, \cdots, W_{s_n}) \mid \mathcal{F}_{s_1} \right),
\end{align*}
we conclude
\begin{align*}
  &\mathbb{E}_x\left(  \mathbb{E}_x (F \mid \mathcal{F}_{s_1}) \int_s^{s_1}\langle \gamma^{\prime}(r), dB_r\rangle_{\mathcal{H}} \mid \mathcal{F}_s \right) \\
  = & \mathbb{E}_x\left(\sum_{i=1}^n \langle d_if(W_{s_1}, \cdots, W_{s_n}),\tau_{s_i}^{\varepsilon,*} \int_s^{s_1} (\tau_r^{\varepsilon,*})^{-1}\widehat{\Theta}^\varepsilon_{r} \gamma^{\prime}(r) dr \rangle \mid \mathcal{F}_s \right).
\end{align*}
Using the induction hypothesis that \eqref{e.IbPinduction} holds for $n-1$ we see that
\begin{align*}
& \mathbb{E}_x\left( \left.F \int_{s_1}^{s_n}\langle \gamma^{\prime}(r), dB_r\rangle_{\mathcal{H}} \right| \mathcal{F}_{s_1}\right)=
 \\
& \mathbb{E}_{x} \left(\sum_{i=1}^n \langle d_if(W_{s_1},\cdots, W_{s_n}),\tau_{s_i}^{\varepsilon,*} \int_{s_1}^{s_i} (\tau_r^{\varepsilon,*})^{-1} \gamma'(r) dr \rangle \mid \mathcal{F}_{s_1}\right).
\end{align*}

\end{proof}

\subsubsection{Integration by parts formula for the directional derivatives}

In this section we prove Theorem \ref{IBP2}. One of the main ingredients  B.~Driver used in \cite{Driver1992b} in the Riemannian case was the orthogonal invariance of the Brownian motion to filter out redundant noise. As a complement to Lemma \ref{IPP}, we first prove the following result.

\begin{lem}\label{IPP2}
Let $\left\{ \mathcal{O}_s \right\}_{0 \leqslant s \leqslant 1}$ be a continuous $\mathcal{F}$-adapted process taking values in the space of skew-symmetric endomorphisms of $\mathcal{H}_x$  such that $\mathbb{E} \left( \int_0^1 \| \mathcal{O}_s \|^2 ds\right) < \infty$, where $ \| \mathcal{O}_s \|^2 =\mathbf{Tr} (\mathcal{O}_s^* \mathcal{O}_s)$  . For $f \in C^\infty(\M)$, we have
\[
\mathbb{E}_x \left(\left\langle  \tau^\varepsilon_1 df(W_1),\int_0^1  (\tau^{\varepsilon,*}_s)^{-1}  \widehat{\Theta}^\varepsilon_{s} \left(\mathcal{O}_s dB_s-\frac{1}{2} T^\varepsilon_{\mathcal{O}_s} ds \right) \right\rangle \right)=0,
\]
where $T^\varepsilon_{\mathcal{O}_s}$ is the tensor given in a horizontal frame $e_1,\cdots,e_n$ by
\[
T^\varepsilon_{\mathcal{O}_s}=\sum_{i=1}^n  (\widehat{\Theta}^\varepsilon_{s})^{-1}T^\varepsilon (e_i, \widehat{\Theta}^\varepsilon_{s} \mathcal{O}_s  (\widehat{\Theta}^\varepsilon_{s})^{-1} e_i).
\]
 \end{lem}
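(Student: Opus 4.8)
The plan is to reduce this identity to the martingale property of the process $N_s = \tau^\varepsilon_s (dP_{1-s}f)(W_s)$ from Lemma \ref{l.5.4}, exploiting the skew-symmetry of the correction term to cancel the It\^o drift coming from the anticipating-looking integrand. First I would set $M_s := \int_0^s (\tau^{\varepsilon,*}_r)^{-1} \widehat{\Theta}^\varepsilon_r \left(\mathcal{O}_r dB_r - \tfrac12 T^\varepsilon_{\mathcal{O}_r}\, dr\right)$ and observe that, since $B_s = \int_0^s (\widehat{\Theta}^\varepsilon_r)^{-1}\circ dW_r$ is the anti-development Brownian motion (Remark \ref{r.EpsilonInd}), the quantity we want to compute is $\mathbb{E}_x\left(\langle N_1, M_1\rangle\right)$, where now $N_1 = \tau^\varepsilon_1 df(W_1)$. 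The natural approach is to integrate by parts in the sense of It\^o: $\mathbb{E}_x\langle N_1, M_1\rangle = \mathbb{E}_x\int_0^1 \langle dN_s, M_s\rangle + \mathbb{E}_x\int_0^1 \langle N_s, dM_s\rangle + \mathbb{E}_x\int_0^1 d\langle N, M\rangle_s$. Since $N$ is a martingale and $M$ has the explicit form above, the first term vanishes, the martingale part of the second term vanishes, and what remains is a competition between the finite-variation ($ds$) part of $M$ and the quadratic covariation $d\langle N, M\rangle_s$.

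The crux is then to show these two surviving terms cancel. For this I would write $N_s$ in Stratonovich form using \eqref{tau_t}: the martingale part of $dN_s$ is $\tau^\varepsilon_s (\nabla - \mathfrak{T}^\varepsilon)_{dW_s} (dP_{1-s}f)(W_s)$ in the It\^o sense, which, pulled back by $\tau^\varepsilon$, has bracket against $dM_s$ equal to a sum over a horizontal frame $e_1,\dots,e_n$ of terms of the form $\langle \tau^\varepsilon_s (\nabla - \mathfrak{T}^\varepsilon)_{\widehat{\Theta}^\varepsilon_s e_i}(dP_{1-s}f), (\tau^{\varepsilon,*}_s)^{-1}\widehat{\Theta}^\varepsilon_s \mathcal{O}_s (\widehat{\Theta}^\varepsilon_s)^{-1} e_i\rangle\, ds$. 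Because $dP_{1-s}f$ is a (co)vector pulled back along $W$, the purely covariant-derivative piece $\nabla$ produces, after using $g_\varepsilon$-duality, a term antisymmetric in the pair $(e_i, \mathcal{O}_s e_i)$ summed against the \emph{symmetric} Hessian, hence vanishes upon summation (this is exactly the orthogonal-invariance cancellation B.~Driver uses in \cite{Driver1992b}); while the torsion piece $\mathfrak{T}^\varepsilon$ reassembles, using the skew-symmetry of $\widehat{T}^\varepsilon$ (equivalently, the definition of $J$ and \eqref{e.4.3}), precisely into $\tfrac12 \langle N_s, (\tau^{\varepsilon,*}_s)^{-1}\widehat{\Theta}^\varepsilon_s T^\varepsilon_{\mathcal{O}_s}\rangle\, ds$, which is the negative of the finite-variation contribution of $dM_s$. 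Thus $\mathbb{E}_x\langle N_1, M_1\rangle = 0$.

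The main obstacle I anticipate is the bookkeeping in the bracket computation: one must carefully convert between Stratonovich and It\^o forms of \eqref{tau_t}, keep track of which pieces of $\nabla^\varepsilon = \nabla - \mathfrak{T}^\varepsilon$ contribute to the quadratic covariation, and verify that the antisymmetric-in-the-frame piece genuinely kills the Hessian term (using that $\mathcal{O}_s$ is skew-symmetric and that the Hessian of $P_{1-s}f$ with respect to a metric connection is symmetric in the relevant sense on horizontal vectors — here one uses the horizontality of $\widehat{\nabla}^\varepsilon$). A secondary technical point is justifying the integration by parts and the vanishing of expectations of the stochastic integrals, which follows from the $L^2$ bounds on $\mathcal{O}_s$, the compactness of $\M$ (giving boundedness of $\tau^\varepsilon$, $\widehat{\Theta}^\varepsilon$, $T^\varepsilon$ and $dP_{1-s}f$), and the martingale bound from Lemma \ref{l.5.4}; these are routine given the hypotheses. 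I would also record, as in Remark \ref{conditional H}, the conditional version of this identity, since the induction in the proof of Theorem \ref{IBP2} will need it.
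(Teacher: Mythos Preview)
Your approach is essentially the paper's: pair the martingale $N_s=\tau^\varepsilon_s(dP_{1-s}f)(W_s)$ against $M_s$ via the It\^o product formula, and identify the quadratic covariation as the antisymmetric part of a Hessian, which by skew-symmetry of $\mathcal{O}_s$ is the torsion term $T^\varepsilon_{\mathcal{O}_s}$. The paper does exactly this, writing the covariation as $\mathbf{Hess}^\varepsilon P_{1-s}f(\widehat{\Theta}^\varepsilon_s dB_s,\widehat{\Theta}^\varepsilon_s\mathcal{O}_s dB_s)$ for the $\nabla^\varepsilon$-Hessian and using $\mathbf{Hess}^\varepsilon h(X,Y)-\mathbf{Hess}^\varepsilon h(Y,X)=T^\varepsilon(X,Y)h$.

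There is one slip in your splitting, though. You claim the ``purely covariant-derivative piece $\nabla$'' yields a \emph{symmetric} Hessian that vanishes against skew $\mathcal{O}_s$, with the $\mathfrak{T}^\varepsilon$ piece alone producing $T^\varepsilon_{\mathcal{O}_s}$. This is not correct: the Bott connection $\nabla$ has torsion $T$, and for horizontal $X,Y$ one has $\mathbf{Hess}^\nabla h(X,Y)-\mathbf{Hess}^\nabla h(Y,X)=-T(X,Y)h$, which is a \emph{vertical} derivative of $P_{1-s}f$ and has no reason to vanish. Correspondingly, $\mathfrak{T}^\varepsilon_X Y-\mathfrak{T}^\varepsilon_Y X\neq T^\varepsilon(X,Y)$, so the torsion piece alone does not reassemble into $T^\varepsilon_{\mathcal{O}_s}$. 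The paper sidesteps this by not splitting: it works directly with the $\nabla^\varepsilon$-Hessian, whose antisymmetric part is exactly $T^\varepsilon(X,Y)h$, and that is the clean way to execute your plan.
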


\begin{proof}
Recall that we considered the following  martingale in \eqref{e.5.11}
\begin{equation*}
N_s=\tau^\varepsilon_s(dP_{1-s} f) (W_s), \quad 0 \leqslant s \leqslant 1.
\end{equation*}
We have then
\begin{align*}
& \mathbb{E}_x \left(\left\langle  \tau^\varepsilon_1 df(W_1),\int_0^1  (\tau^{\varepsilon,*}_s)^{-1}  \widehat{\Theta}^\varepsilon_{s} \mathcal{O}_s dB_s \right\rangle \right)=
\\
& \mathbb{E}_x \left(\left\langle  N_1,\int_0^1  (\tau^{\varepsilon,*}_s)^{-1}  \widehat{\Theta}^\varepsilon_{s} \mathcal{O}_s dB_s \right\rangle \right).
\end{align*}
From the proof of Lemma \ref{l.5.4}, we have
\begin{align*}
& dN_s
\\
& =\tau^\varepsilon_s \left( \nabla_{\circ dW_s}-\mathfrak{T}_{\circ dW_s}^{\varepsilon}-\frac{1}{2} \left( \frac{1}{ \varepsilon}\mathbf{J}^2-\frac{1}{\varepsilon} \delta_\mathcal{H} T + \mathfrak{Ric}_{\mathcal{H}}\right)ds\right) (dP_{1-s} f) (W_s)
\\
& +\tau^\varepsilon_s \frac{d}{ds} (dP_{1-s} f) (W_s) ds
\\
&=\tau^\varepsilon_s \left( \nabla_{\widehat{\Theta}^\varepsilon_{s} dB_s }-\mathfrak{T}_{\widehat{\Theta}^\varepsilon_{s}dB_s}^{\varepsilon} \right) (dP_{1-s} f) (W_s)=\tau^\varepsilon_s\nabla^\varepsilon_{\widehat{\Theta}^\varepsilon_{s} dB_s }dP_{1-s} f (W_s).
\end{align*}
where, as before, $\nabla^\varepsilon$ denotes the connection $\nabla-\mathfrak{T}^\varepsilon$. Let us denote by $\mathbf{Hess}^\varepsilon$ the Hessian for the connection $\nabla^\varepsilon$. One has therefore

\begin{align*}
& \mathbb{E}_x \left(\left\langle  N_1,\int_0^1  (\tau^{\varepsilon,*}_s)^{-1}  \widehat{\Theta}^\varepsilon_{s} \mathcal{O}_s dB_s \right\rangle \right) =
\\
& \mathbb{E}_x \left(\int_0^1 \mathbf{Hess}^\varepsilon P_{1-s} f ( \widehat{\Theta}^\varepsilon_{s} dB_s,  \widehat{\Theta}^\varepsilon_{s} \mathcal{O}_s dB_s) (W_s)    \right)
\end{align*}

Due to the skew symmetry of $\mathcal{O}$ and the fact that for $h \in C^\infty(\M)$, $X,Y \in \Gamma^\infty(\M)$,
\[
\mathbf{Hess}^\varepsilon h (X,Y)  -\mathbf{Hess}^\varepsilon h (Y,X)=T^\varepsilon (X,Y) h,
\]
we deduce

\begin{align*}
& \mathbb{E}_x \left(\left\langle  N_1,\int_0^1  (\tau^{\varepsilon,*}_s)^{-1}  \widehat{\Theta}^\varepsilon_{s} \mathcal{O}_s dB_s \right\rangle \right) =
\\
&
\frac{1}{2} \mathbb{E}_x \left(\int_0^1 \left\langle dP_{1-s} f,  \widehat{\Theta}^\varepsilon_{s} T^\varepsilon_{\mathcal{O}_s}  \right\rangle ds \right)=
\\
&
\frac{1}{2} \mathbb{E}_x \left(\int_0^1 \left\langle N_s, (\tau^{\varepsilon,*}_s)^{-1}  \widehat{\Theta}^\varepsilon_{s}T^\varepsilon_{\mathcal{O}_s}  \right\rangle ds \right).
\end{align*}
Integrating by parts the right hand side yields the conclusion.
\end{proof}

We are now in position to prove the integration by parts formula for cylinder functions of the type $F=f(W_s)$.

\begin{lem}\label{IPP4}
Let $v $ be a tangent process. For $f \in C^\infty (\M)$,
\begin{align*}
& \mathbb{E}_x \left( \left\langle df(W_1), \widehat{\Theta}_1^\varepsilon v (1) \right\rangle \right)=
\\
& \mathbb{E}_x \left( f(W_1) \int_0^1 \left\langle v_\mathcal{H}'(s)+ \frac{1}{2} (\widehat{\Theta}_s^{\varepsilon})^{-1} \mathfrak{Ric}_{\mathcal{H}}   \widehat{\Theta}_s^{\varepsilon} v_{\mathcal{H}} (s), dB_s\right\rangle_{\mathcal{H}} \right).
\end{align*}
\end{lem}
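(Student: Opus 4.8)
\textbf{Proof plan for Lemma \ref{IPP4}.}

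The plan is to reduce this to the integration by parts formula for the damped Malliavin derivative (Lemma \ref{IPP}, in the conditional form of Remark \ref{conditional H}) applied to the horizontal Cameron–Martin part $v_\mathcal{H}$ of the tangent process $v$, and then to absorb the vertical part $v_\mathcal{V}$ and the extra term coming from the multiplicative functional $\mathcal{M}^\varepsilon$ using the orthogonal invariance of Lemma \ref{IPP2}. First I would write $v = v_\mathcal{H} + v_\mathcal{V}$, where $v_\mathcal{V}(s) = \int_0^s (\widehat{\Theta}_r^\varepsilon)^{-1} T(\widehat{\Theta}_r^\varepsilon \circ dB_r, \widehat{\Theta}_r^\varepsilon v_\mathcal{H}(r))$, so that $\widehat{\Theta}_1^\varepsilon v(1) = \widehat{\Theta}_1^\varepsilon v_\mathcal{H}(1) + \widehat{\Theta}_1^\varepsilon v_\mathcal{V}(1)$. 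By Lemma \ref{IPP} applied with $\gamma = v_\mathcal{H}$ (a horizontal Cameron–Martin process with $\gamma'(s) = v_\mathcal{H}'(s)$), we have
\[
\mathbb{E}_x\left( f(W_1) \int_0^1 \langle v_\mathcal{H}'(s), dB_s\rangle_\mathcal{H}\right) = \mathbb{E}_x\left(\left\langle \tau_1^\varepsilon df(W_1), \int_0^1 (\tau_s^{\varepsilon,*})^{-1}\widehat{\Theta}_s^\varepsilon v_\mathcal{H}'(s)\, ds\right\rangle\right).
\]
The right side must be massaged into $\mathbb{E}_x(\langle df(W_1), \widehat{\Theta}_1^\varepsilon v_\mathcal{H}(1)\rangle)$ plus the corrections; this is where the relation $\tau^\varepsilon = \mathcal{M}^\varepsilon \Theta^\varepsilon$ and the defining ODE \eqref{multiplicative function M_t} for $\mathcal{M}^\varepsilon$ enter. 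Concretely, integrating by parts in $s$ the expression $\int_0^1 (\tau_s^{\varepsilon,*})^{-1}\widehat{\Theta}_s^\varepsilon v_\mathcal{H}'(s)\, ds$ and using $\frac{d}{ds}(\tau_s^{\varepsilon,*})^{-1}$ together with the fact that $\widehat{\Theta}^\varepsilon$ is the parallel transport of the Bott connection (Remark \ref{r.EpsilonInd}) produces the Ricci term $\frac{1}{2}(\widehat{\Theta}_s^\varepsilon)^{-1}\mathfrak{Ric}_\mathcal{H}\widehat{\Theta}_s^\varepsilon v_\mathcal{H}(s)$ as well as cross terms involving $\mathbf{J}^2$, $\delta_\mathcal{H} T$ and the torsion $\mathfrak{T}^\varepsilon$, which are precisely the terms that will cancel against the contribution of $v_\mathcal{V}$.

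The step I expect to be the main obstacle is handling the vertical part $v_\mathcal{V}(1)$ together with the torsion-type terms $\mathfrak{T}^\varepsilon$ arising in the parallel transport $\Theta^\varepsilon$ for $\nabla^\varepsilon = \nabla - \mathfrak{T}^\varepsilon$. Here the key point is that $\widehat{\Theta}_1^\varepsilon v_\mathcal{V}(1)$ is an iterated stochastic integral of the torsion $T$ along the horizontal Brownian motion, and one can identify it (after rewriting via the martingale $N_s$ of \eqref{e.5.11}) with an expression of the form $\int_0^1 (\tau_s^{\varepsilon,*})^{-1}\widehat{\Theta}_s^\varepsilon(\mathcal{O}_s dB_s - \frac{1}{2}T^\varepsilon_{\mathcal{O}_s} ds)$ for a suitable skew-symmetric $\mathcal{H}_x$-valued process $\mathcal{O}_s$ built from the $J$-map and $v_\mathcal{H}$ (this is the "redundant noise" that the adjoint connection being horizontal lets us filter out). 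Lemma \ref{IPP2} then says precisely that $\mathbb{E}_x(\langle \tau_1^\varepsilon df(W_1), \int_0^1 (\tau_s^{\varepsilon,*})^{-1}\widehat{\Theta}_s^\varepsilon(\mathcal{O}_s dB_s - \frac12 T^\varepsilon_{\mathcal{O}_s} ds)\rangle) = 0$, so that the vertical contribution together with the drift correction it generates cancels out of the identity. The bookkeeping of matching the $\mathbf{J}^2$, $\delta_\mathcal{H} T$, and torsion terms between the three sources — the ODE for $\mathcal{M}^\varepsilon$, the $\mathfrak{T}^\varepsilon$ in $\Theta^\varepsilon$, and the drift $-\frac12 T^\varepsilon_{\mathcal{O}_s}$ in Lemma \ref{IPP2} — is the computational heart of the argument; the relevant algebraic identities are exactly those recorded in Lemma \ref{curvature adjoint}, Lemma \ref{ricci adjoint}, and the definition \eqref{Jmap} of the $J$-map, and by Remark \ref{r.EpsilonInd} the final answer must be $\varepsilon$-independent, which serves as a consistency check. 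Once the cancellations are carried out, what remains is exactly $\mathbb{E}_x(\langle df(W_1), \widehat{\Theta}_1^\varepsilon v(1)\rangle)$ on the left and the stated stochastic integral on the right.
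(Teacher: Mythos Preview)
Your proposal uses exactly the right ingredients (Lemmas \ref{IPP} and \ref{IPP2}, with the skew-symmetric process $\mathcal{O}_s$ built from the $J$-map) and would succeed, but the paper organizes the computation differently and more economically. Rather than splitting $v=v_\mathcal{H}+v_\mathcal{V}$ at the outset and then integrating by parts in $s$ on $\int_0^1 (\tau_s^{\varepsilon,*})^{-1}\widehat{\Theta}_s^\varepsilon v_\mathcal{H}'(s)\,ds$ while separately identifying the vertical contribution, the paper derives in one stroke a representation
\[
v(s)=(\widehat{\Theta}_s^\varepsilon)^{-1}\tau_s^{\varepsilon,*}\int_0^s (\tau_r^{\varepsilon,*})^{-1}\widehat{\Theta}_r^\varepsilon\circ dM_r
\]
by recognizing that $(\widehat{\Theta}_s^\varepsilon)^{-1}\tau_s^{\varepsilon,*}$ is precisely the solution operator of the covariant SDE built from $-\widehat{T}^\varepsilon(\circ dW_s,\cdot)+\tfrac12\widehat{\mathfrak{Ric}}^\varepsilon_\mathcal{H}\,ds$ (this is where Lemma \ref{ricci adjoint} enters). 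The driving semimartingale $dM_s$ then automatically splits as $dv_\mathcal{H}(s)+\tfrac12(\widehat{\Theta}_s^\varepsilon)^{-1}\mathfrak{Ric}_\mathcal{H}\widehat{\Theta}_s^\varepsilon v_\mathcal{H}(s)\,ds$ plus the rotational piece $\mathcal{O}_s\,dB_s-\tfrac12 T^\varepsilon_{\mathcal{O}_s}\,ds$ with $\mathcal{O}_s=\tfrac{1}{\varepsilon}(\widehat{\Theta}_s^\varepsilon)^{-1}J_{\widehat{\Theta}_s^\varepsilon v(s)}\widehat{\Theta}_s^\varepsilon$, and Lemmas \ref{IPP} and \ref{IPP2} apply directly to the two pieces. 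Your route requires you to track separately how the torsion term $\mathfrak{T}^\varepsilon$ inside $\Theta^\varepsilon$ acting on $v_\mathcal{H}$ produces the vertical drift that then matches $v_\mathcal{V}$, whereas the paper's formulation packages all of this into the single identity for $v$; the cancellations you anticipate are exactly those that make the representation formula hold. Both approaches are correct and rest on the same lemmas, but the paper's avoids the intermediate horizontal/vertical bookkeeping by working with the full $v$ throughout.
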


 \begin{proof}
 Let $v $ be a tangent process.  We define
 \[
h(s)= v(s)-\int_0^s (\widehat{\Theta}_r^{\varepsilon})^{-1} T ( \widehat{\Theta}_r^{\varepsilon} \circ dB_r, \widehat{\Theta}_r^{\varepsilon} v(r)).
 \]
By definition of tangent processes, we have that $h=v_\mathcal{H}$ is a  horizontal Cameron-Martin process. By Equation \eqref{e.4.3} we have
\[
\widehat{T}^\varepsilon (\circ dW_s,  \widehat{\Theta}_s^{\varepsilon} v(s) )=T( \widehat{\Theta}_s^{\varepsilon} \circ dB_s,\widehat{\Theta}_s^{\varepsilon} v(s) )-\frac{1}{\varepsilon} J_{\widehat{\Theta}_s^{\varepsilon} v(s) } (\widehat{\Theta}_s^{\varepsilon} \circ dB_s).
\]
Therefore we get
\begin{align*}
& dv(s)+(\widehat{\Theta}_s^{\varepsilon})^{-1}  \left(-\widehat{T}^\varepsilon (\circ dW_s,\cdot) + \frac{1}{2} \left( \frac{1}{ \varepsilon}\mathbf{J}^2-\frac{1}{\varepsilon} \delta^*_\mathcal{H} T +\mathfrak{Ric}_{\mathcal{H}} \right) ds  \right) \widehat{\Theta}_s^{\varepsilon} v(s)
\\
= &dh(s)
+\frac{1}{\varepsilon} (\widehat{\Theta}_s^{\varepsilon})^{-1}J_{\widehat{\Theta}_s^{\varepsilon} v(s) } (\widehat{\Theta}_s^{\varepsilon} \circ dB_s)
\\
+& \frac{1}{2} (\widehat{\Theta}_s^{\varepsilon})^{-1}\left( \frac{1}{ \varepsilon}\mathbf{J}^2-\frac{1}{\varepsilon} \delta^*_\mathcal{H} T +\mathfrak{Ric}_{\mathcal{H}} \right) \widehat{\Theta}_s^{\varepsilon} h(s) ds
\\
 = & dh(s) +\frac{1}{\varepsilon} (\widehat{\Theta}_s^{\varepsilon})^{-1}J_{\widehat{\Theta}_s^{\varepsilon} v(s) }(\widehat{\Theta}_s^{\varepsilon}  dB_s)
 + \frac{1}{2} (\widehat{\Theta}_s^{\varepsilon})^{-1}\left( \mathfrak{Ric}_{\mathcal{H}} \right) \widehat{\Theta}_t^{\varepsilon} h(s) ds.
\end{align*}
In the last computation, the transformation of the Stratonovitch differential
\[
(\widehat{\Theta}_s^{\varepsilon})^{-1}J_{\widehat{\Theta}_s^{\varepsilon} v(s) }(\widehat{\Theta}_s^{\varepsilon} \circ dB_s)
\]
into  It\^o's differential
\[
(\widehat{\Theta}_s^{\varepsilon})^{-1}J_{\widehat{\Theta}_s^{\varepsilon} v(s) }(\widehat{\Theta}_s^{\varepsilon} \ dB_s)
\]
is similar to \eqref{trace J}. It is then a consequence of It\^o's formula that
 \[
 v(s)=(\widehat{\Theta}_s^{\varepsilon})^{-1} \tau_s^{\varepsilon,*} \int_0^s  (\tau_r^{\varepsilon,*} )^{-1} \widehat{\Theta}_r^{\varepsilon} \circ dM_r,
 \]
 where
 \[
 dM_s=dh(s) +\frac{1}{\varepsilon} (\widehat{\Theta}_s^{\varepsilon})^{-1}J_{\widehat{\Theta}_s^{\varepsilon} v(s) }\widehat{\Theta}_s^{\varepsilon}  dB_s+ \frac{1}{2} (\widehat{\Theta}_s^{\varepsilon})^{-1}\left( \mathfrak{Ric}_{\mathcal{H}} \right) \widehat{\Theta}_s^{\varepsilon} h(s) ds.
 \]
Converting the Stratonovich integral into It\^o's integral finally yields
\begin{align*}
& v(s)=(\widehat{\Theta}_s^{\varepsilon})^{-1} \tau_s^{\varepsilon,*} \int_0^s  (\tau_r^{\varepsilon,*} )^{-1} \widehat{\Theta}_r^{\varepsilon}  \left( dh(s) + \frac{1}{2} (\widehat{\Theta}_s^{\varepsilon})^{-1}\left( \mathfrak{Ric}_{\mathcal{H}} \right) \widehat{\Theta}_s^{\varepsilon} h(s) ds
 \right.
\\
& \left.+\mathcal{O}_s dB_s-\frac{1}{2} T^\varepsilon_{\mathcal{O}_s} ds\right),
 \end{align*}
 with
 \[
 \mathcal{O}_s=\frac{1}{\varepsilon} (\widehat{\Theta}_s^{\varepsilon})^{-1}J_{\widehat{\Theta}_s^{\varepsilon} v(s) }\widehat{\Theta}_s^{\varepsilon}.
 \]
 Since $ \mathcal{O}_s$ is a skew-symmetric horizontal endomorphism, one can conclude from Lemmas \ref{IPP} and \ref{IPP2} that
\begin{align*}
& \mathbb{E}_x \left( \left\langle df(W_s), \widehat{\Theta}_t^\varepsilon v (s) \right\rangle \right)
\\
& = \mathbb{E}_x \left( f(W_s) \int_0^1 \left\langle v_\mathcal{H}^{\prime}(s)+ \frac{1}{2} (\widehat{\Theta}_s^{\varepsilon})^{-1} \mathfrak{Ric}_{\mathcal{H}}   \widehat{\Theta}_s^{\varepsilon} v_{\mathcal{H}} (s), dB_s\right\rangle_{\mathcal{H}} \right)
\end{align*}
because $h(s)=v_\mathcal{H} (s)$.
\end{proof}

Now Theorem \ref{IBP2} can be proven using  induction on $n$ in the representation of a cylinder function $F$. The case $n=1$ is Lemma \ref{IPP4}, and showing the induction step is similar to how Theorem \ref{IBP} has been proven, so for the sake of conciseness of the paper, we omit the details. As a direct corollary of Theorem \ref{IBP2}, we obtain the following.

\begin{corollary}
Let $F,G \in \mathcal{F}C^{\infty} \left( W_x (\M)\right)$ and  $v $ be a tangent process. We have
\[
\mathbb{E}_x ( F \mathbf{D}_v G) =\mathbb{E}_x ( G \mathbf{D}^{*}_v F),
\]
where
\[
\mathbf{D}^{*}_v =-\mathbf{D}_v +\int_0^1 \left\langle v_\mathcal{H}'(s)+ \frac{1}{2} (\widehat{\Theta}_s^{\varepsilon})^{-1} \mathfrak{Ric}_{\mathcal{H}}   \widehat{\Theta}_s^{\varepsilon} v_{\mathcal{H}} (s), dB_s \right\rangle_{\mathcal{H}}.
\]
\end{corollary}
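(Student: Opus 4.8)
The plan is to derive this as a formal consequence of Theorem \ref{IBP2}, applied to the product $FG$, combined with the Leibniz rule for the directional derivative $\mathbf{D}_v$.

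First I would verify that $FG \in \mathcal{F}C^{\infty}\left( W_x(\M)\right)$ and that $\mathbf{D}_v$ obeys the product rule. If $F = f(w_{s_1},\dots,w_{s_n})$ and $G = g(w_{t_1},\dots,w_{t_k})$, refine the two partitions to a common one $\{r_1 < \dots < r_N\} \supseteq \{s_i\}\cup\{t_j\}$ and, forgetting the redundant coordinates, rewrite $F = \tilde f(w_{r_1},\dots,w_{r_N})$ and $G = \tilde g(w_{r_1},\dots,w_{r_N})$ with $\tilde f,\tilde g \in C^{\infty}(\M^N)$; then $FG = (\tilde f\tilde g)(w_{r_1},\dots,w_{r_N}) \in \mathcal{F}C^{\infty}\left( W_x(\M)\right)$. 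Since $d_i(\tilde f\tilde g) = \tilde g\, d_i\tilde f + \tilde f\, d_i\tilde g$, the definition $\mathbf{D}_v F = \sum_i \langle d_i f, \widehat{\Theta}_{s_i}^{\varepsilon} v(s_i)\rangle$ yields the Leibniz identity
\[
\mathbf{D}_v(FG) = F\,\mathbf{D}_v G + G\,\mathbf{D}_v F ,
\]
and by Remark \ref{jkmn} both sides are independent of the chosen representations.

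Next I would apply Theorem \ref{IBP2} to the cylinder function $FG$ with the tangent process $v$, obtaining
\[
\mathbb{E}_x\bigl( F\,\mathbf{D}_v G + G\,\mathbf{D}_v F \bigr) = \mathbb{E}_x\!\left( FG \int_0^1 \left\langle v_\mathcal{H}'(s) + \tfrac{1}{2}(\widehat{\Theta}_s^{\varepsilon})^{-1}\mathfrak{Ric}_{\mathcal{H}}\,\widehat{\Theta}_s^{\varepsilon} v_\mathcal{H}(s),\, dB_s\right\rangle_{\mathcal{H}} \right).
\]
Subtracting $\mathbb{E}_x(G\,\mathbf{D}_v F)$ from both sides and using the definition $\mathbf{D}^*_v F = -\mathbf{D}_v F + F\int_0^1\langle v_\mathcal{H}'(s) + \tfrac12(\widehat{\Theta}_s^{\varepsilon})^{-1}\mathfrak{Ric}_{\mathcal{H}}\widehat{\Theta}_s^{\varepsilon} v_\mathcal{H}(s), dB_s\rangle_{\mathcal{H}}$, the right-hand side becomes $\mathbb{E}_x\bigl(G\,\mathbf{D}^*_v F\bigr)$, which is exactly the claimed identity $\mathbb{E}_x(F\,\mathbf{D}_v G) = \mathbb{E}_x(G\,\mathbf{D}^*_v F)$.

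The only routine points to check are integrability of all the expectations, which holds because on the compact manifold $\M$ the cylinder functions $F,G$ and their differentials are bounded while the stochastic integral $\int_0^1\langle v_\mathcal{H}'(s) + \tfrac12(\widehat{\Theta}_s^{\varepsilon})^{-1}\mathfrak{Ric}_{\mathcal{H}}\widehat{\Theta}_s^{\varepsilon} v_\mathcal{H}(s), dB_s\rangle_{\mathcal{H}}$ lies in $L^2(\mu_\mathcal{H})$ thanks to the tangent-process bound $\mathbb{E}_x\bigl(\int_0^1\|v_\mathcal{H}'(s)\|^2\,ds\bigr) < \infty$ and the boundedness of $\mathfrak{Ric}_{\mathcal{H}}$; the compatibility of the Leibniz rule with the minimal-representation conventions has already been addressed in Remark \ref{jkmn}. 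I do not expect any genuine obstacle: the corollary is a direct formal consequence of Theorem \ref{IBP2}.
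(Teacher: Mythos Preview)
Your proof is correct and follows exactly the same approach as the paper: apply Theorem \ref{IBP2} to the product $FG$ and use the Leibniz rule $\mathbf{D}_v(FG)=F\,\mathbf{D}_v G+G\,\mathbf{D}_v F$ to rearrange. The paper's version is terser, but your additional justification of $FG\in\mathcal{F}C^{\infty}(W_x(\M))$, the product rule, and the integrability are all reasonable elaborations of the same argument.
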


\begin{proof}
By Theorem \ref{IBP2}, we have
\[
\mathbb{E}_x (  \mathbf{D}_v (FG))=\mathbb{E}_x \left( FG \int_0^1 \left\langle v_\mathcal{H}'(s)+ \frac{1}{2} (\widehat{\Theta}_s^{\varepsilon})^{-1} \mathfrak{Ric}_{\mathcal{H}}   \widehat{\Theta}_s^{\varepsilon} v_{\mathcal{H}} (s), dB_s \right\rangle_{\mathcal{H}} \right).
\]
Since $\mathbf{D}_v (FG)=F \mathbf{D}_v (G) +G \mathbf{D}_v (F)$, the conclusion follows immediately.
\end{proof}

\subsection{Examples}

\subsubsection{Riemannian submersions}\label{ss.RiemSubmersions}

In this section, we verify that the integration by parts formula we obtained for the directional derivatives is consistent with and generalizes the formulas known in the Riemannian case. Let us assume here that the foliation on $\M$ comes from a totally geodesic submersion $\pi : (\M,g) \to (\B,j)$ as in Example \ref{ex.RiemSubmersion}. Since the submersion has totally geodesic fibers, $\pi$ is harmonic and the projected process
\[
W_s^\B =\pi (W_s)
\]
is a Brownian motion on $\B$ started at $\pi(x)$. Observe that from the definition of submersion, the derivative map $T_x \pi$ is an isometry from $\mathcal{H}_x$ to $T_x \B$. From Example \ref{connection submersion},  the connection $\hat{\nabla}^\ve$ projects down to the Levi-Civita connection on $\B$. Therefore the stochastic parallel transport $\widehat{\Theta}_t^\varepsilon$ projects down to the stochastic parallel transport for the Levi-Civita connection along the paths of $\left\{ W_s^\B \right\}_{0 \leqslant s \leqslant 1}$. More precisely,

\[
\para_{0, s}=T_{W_s} \pi \circ \widehat{\Theta}_s^\varepsilon \circ (T_{x} \pi)^{-1},
\]
where $\para_{0,s} : T_{\pi (x)} \B \to T_{W_s^\B} \B$ is the stochastic parallel transport for the Levi-Civita connection along the paths of $\left\{ W_s^\B \right\}_{0 \leqslant s \leqslant 1}$. Consider now a Cameron-Martin process $\left\{ h(s)\right\}_{0\leqslant s \leqslant 1}$ in $T_{\pi (x)} \B$ and a cylinder function $F=f(W_{s_1}^\B,\cdots, W_{s_n}^\B)$ on $\B$. The function $F =f(\pi(W_{s_1}), \cdots, \pi(W_{s_n}))$ is then in $\mathcal{F}C^{\infty} \left( W_\mathcal{H} (\mathbb{M})\right)$(Refer to \cite[Definition 7.4]{Driver2004a}). Using Theorem \ref{IBP2}, one gets
\begin{align*}
  \mathbb{E}_x\left( \mathbf{D}_v F \ \right)
 =\mathbb{E}_x \left( F \int_0^1 \left\langle v_\mathcal{H}'(s)+ \frac{1}{2} (\widehat{\Theta}_s^{\varepsilon})^{-1} \mathfrak{Ric}_{\mathcal{H}}   \widehat{\Theta}_s^{\varepsilon} v_{\mathcal{H}} (s), dB_s\right\rangle_{\mathcal{H}}  \right),
\end{align*}
where $v_\mathcal{H}$ is the horizontal lift of $h$, that is, $v_\mathcal{H}=(T_{x} \pi)^{-1} h$. By definition, we have
\begin{align*}
\mathbf{D}_v F&=\sum_{i=1}^n \left\langle d_if(W^\B_{s_1},\cdots, W^\B_{s_n}),  (T_{W_{s_i}} \pi) \circ \widehat{\Theta}_{s_i}^{\varepsilon} v(s_i)  \right\rangle \\
  &=\sum_{i=1}^n \left\langle d_if(W^\B_{s_1}, \cdots, W^\B_{s_n}), \para_{0, s_i} h(s_i)  \right\rangle
\end{align*}
It is easy to check that $\mathfrak{Ric}_{\mathcal{H}}$ is the horizontal lift of the Ricci curvature $\mathfrak{Ric}^\B$ of $\B$. Therefore, the integration by parts formula for the directional derivative $\mathbf{D}_v F$ can be rewritten as follows.
\begin{align*}
  &  \mathbb{E}_{x} \left(\sum_{i=1}^n \left\langle d_if(W^\B_{s_1},\cdots, W^\B_{s_n}), \para_{0, s_i} h(s_i)  \right\rangle \right) \\
   =& \mathbb{E}_x \left( F \int_0^1 \left\langle h'(s)+ \frac{1}{2} \para_{0,s}^{-1} \mathfrak{Ric}^{\B}   \para_{0,s}  h(s),dB^\B_s\right\rangle_{T_{\pi(x)} \B}  \right),
\end{align*}
where $B^\B$ is the Brownian motion on $T_{\pi(x)} \B$ given by $B^\B =T_x \pi (B)$. This is exactly Driver's integration by parts formula in \cite{Driver1992b} for the Riemannian Brownian motion $X^\B$.

\subsubsection{K-contact manifolds}

In this section, we assume that the Riemannian foliation on $\M$ is the Reeb foliation of a K-contact structure. The Reeb vector field on $\M$ will be denoted by $R$ and the almost complex structure by $\mathbf{J}$. The torsion of the Bott connection is then
\[
T(X,Y)=\langle \mathbf J X, Y \rangle_\Ho R.
\]
Therefore with the previous notation, one has
\[
J_Z X =\langle Z,R \rangle \mathbf J X.
\]
and the vertical part of a tangent process is given by
 \begin{align*}
 v_\mathcal{V} (s)&=-\int_0^s (\widehat{\Theta}_r^{\varepsilon})^{-1} T ( \widehat{\Theta}_r^{\varepsilon} \circ dB_r, \widehat{\Theta}_r^{\varepsilon} v_\mathcal{H}(r)) \\
  &=\int_0^s ((\widehat{\Theta}_r^{\varepsilon})^{-1} R) \langle \mathbf J \widehat{\Theta}_r^{\varepsilon} v_\mathcal{H}(r), \widehat{\Theta}_r^{\varepsilon} \circ dB_r \rangle_\Ho.
 \end{align*}


\bibliographystyle{plain}	

\begin{thebibliography}{10}

\bibitem{BaudoinEMS2014}
Fabrice Baudoin.
\newblock Sub-{L}aplacians and hypoelliptic operators on totally geodesic
  {R}iemannian foliations.
\newblock In {\em Geometry, analysis and dynamics on sub-{R}iemannian
  manifolds. {V}ol. 1}, EMS Ser. Lect. Math., pages 259--321. Eur. Math. Soc.,
  Z\"urich, 2016.

\bibitem{Baudoin2017b}
Fabrice Baudoin.
\newblock Stochastic analysis on sub-{R}iemannian manifolds with transverse
  symmetries.
\newblock {\em Ann. Probab.}, 45(1):56--81, 2017.

\bibitem{BaudoinLevico}
Fabrice {Baudoin}.
\newblock {Geometric Inequalities on Riemannian and sub-Riemannian manifolds by
  heat semigroups techniques}.
\newblock {\em arXiv e-prints}, page arXiv:1801.05702, January 2018.

\bibitem{BaudoinFeng2016}
Fabrice Baudoin and Qi~Feng.
\newblock {L}og-{S}obolev inequalities on the horizontal path space of a
  totally geodesic foliation.
\newblock arxiv preprint \url{https://arxiv.org/abs/1503.08180}, 2016.

\bibitem{BaudoinGarofalo2017}
Fabrice Baudoin and Nicola Garofalo.
\newblock Curvature-dimension inequalities and {R}icci lower bounds for
  sub-{R}iemannian manifolds with transverse symmetries.
\newblock {\em J. Eur. Math. Soc. (JEMS)}, 19(1):151--219, 2017.

\bibitem{BaudoinGordinaMelcher2013}
Fabrice Baudoin, Maria Gordina, and Tai Melcher.
\newblock Quasi-invariance for heat kernel measures on sub-{R}iemannian
  infinite-dimensional {H}eisenberg groups.
\newblock {\em Trans. Amer. Math. Soc.}, 365(8):4313--4350, 2013.

\bibitem{BaudoinKimWang2016}
Fabrice Baudoin, Bumsik Kim, and Jing Wang.
\newblock Transverse {W}eitzenb\"ock formulas and curvature dimension
  inequalities on {R}iemannian foliations with totally geodesic leaves.
\newblock {\em Comm. Anal. Geom.}, 24(5):913--937, 2016.

\bibitem{BaudoinWang2014a}
Fabrice Baudoin and Jing Wang.
\newblock Curvature dimension inequalities and subelliptic heat kernel gradient
  bounds on contact manifolds.
\newblock {\em Potential Anal.}, 40(2):163--193, 2014.

\bibitem{Bell2006a}
Denis Bell.
\newblock Quasi-invariant measures on the path space of a diffusion.
\newblock {\em C. R. Math. Acad. Sci. Paris}, 343(3):197--200, 2006.

\bibitem{Berard-Bergery1975}
Lionel B\'erard-Bergery.
\newblock Sur certaines fibrations d'espaces homog\`enes riemanniens.
\newblock {\em Compositio Math.}, 30:43--61, 1975.

\bibitem{Besse1987}
Arthur~L. Besse.
\newblock {\em Einstein manifolds}, volume~10 of {\em Ergebnisse der Mathematik
  und ihrer Grenzgebiete (3) [Results in Mathematics and Related Areas (3)]}.
\newblock Springer-Verlag, Berlin, 1987.

\bibitem{BismutBook1984}
Jean-Michel Bismut.
\newblock {\em Large deviations and the {M}alliavin calculus}, volume~45 of
  {\em Progress in Mathematics}.
\newblock Birkh\"auser Boston, Inc., Boston, MA, 1984.

\bibitem{BoyerGalickiBook}
Charles~P. Boyer and Krzysztof Galicki.
\newblock {\em Sasakian geometry}.
\newblock Oxford Mathematical Monographs. Oxford University Press, Oxford,
  2008.

\bibitem{Cruzeiro1983b}
Ana~Bela Cruzeiro.
\newblock \'equations diff\'erentielles sur l'espace de {W}iener et formules de
  {C}ameron-{M}artin non-lin\'eaires.
\newblock {\em J. Funct. Anal.}, 54(2):206--227, 1983.

\bibitem{CruzeiroMalliavin1996}
Ana-Bela Cruzeiro and Paul Malliavin.
\newblock Renormalized differential geometry on path space: structural
  equation, curvature.
\newblock {\em J. Funct. Anal.}, 139(1):119--181, 1996.

\bibitem{Driver1999b}
B.~K. Driver.
\newblock The {L}ie bracket of adapted vector fields on {W}iener spaces.
\newblock {\em Appl. Math. Optim.}, 39(2):179--210, 1999.

\bibitem{Driver1992b}
Bruce~K. Driver.
\newblock A {C}ameron-{M}artin type quasi-invariance theorem for {B}rownian
  motion on a compact {R}iemannian manifold.
\newblock {\em J. Funct. Anal.}, 110(2):272--376, 1992.

\bibitem{Driver1997a}
Bruce~K. Driver.
\newblock Integration by parts and quasi-invariance for heat kernel measures on
  loop groups.
\newblock {\em J. Funct. Anal.}, 149(2):470--547, 1997.

\bibitem{Driver2004a}
Bruce~K. Driver.
\newblock {C}urved {W}iener space analysis.
\newblock In {\em Real and stochastic analysis, Trends Math.}, pages 43--198.
  Birkh\"{a}user Boston, Boston, MA, 2004.

\bibitem{DriverGrossSaloff-Coste2009a}
Bruce~K. Driver, Leonard Gross, and Laurent Saloff-Coste.
\newblock Holomorphic functions and subelliptic heat kernels over {L}ie groups.
\newblock {\em J. Eur. Math. Soc. (JEMS)}, 11(5):941--978, 2009.

\bibitem{Elworthy2014}
David Elworthy.
\newblock Decompositions of diffusion operators and related couplings.
\newblock In {\em Stochastic analysis and applications 2014}, volume 100 of
  {\em Springer Proc. Math. Stat.}, pages 283--306. Springer, Cham, 2014.

\bibitem{Elworthy2017}
David Elworthy.
\newblock Generalised {W}eitzenb\"ock formulae for differential operators in
  {H}\"ormander form.
\newblock Preprint, 2017.

\bibitem{ElworthyLeJanLiBook1999}
K.~D. Elworthy, Y.~Le~Jan, and Xue-Mei Li.
\newblock {\em On the geometry of diffusion operators and stochastic flows},
  volume 1720 of {\em Lecture Notes in Mathematics}.
\newblock Springer-Verlag, Berlin, 1999.

\bibitem{ElworthyLeJanLiBook2010}
K.~David Elworthy, Yves Le~Jan, and Xue-Mei Li.
\newblock {\em The geometry of filtering}.
\newblock Frontiers in Mathematics. Birkh\"auser Verlag, Basel, 2010.

\bibitem{EnchevStroock1995a}
Ognian Enchev and Daniel~W. Stroock.
\newblock Towards a {R}iemannian geometry on the path space over a {R}iemannian
  manifold.
\newblock {\em J. Funct. Anal.}, 134(2):392--416, 1995.

\bibitem{Escobales1982a}
Richard~H. Escobales, Jr.
\newblock Sufficient conditions for a bundle-like foliation to admit a
  {R}iemannian submersion onto its leaf space.
\newblock {\em Proc. Amer. Math. Soc.}, 84(2):280--284, 1982.

\bibitem{FangMalliavin1993}
Shizan Fang and Paul Malliavin.
\newblock Stochastic analysis on the path space of a {R}iemannian manifold.
  {I}. {M}arkovian stochastic calculus.
\newblock {\em J. Funct. Anal.}, 118(1):249--274, 1993.

\bibitem{FeffermanPhong1983}
C.~Fefferman and D.~H. Phong.
\newblock Subelliptic eigenvalue problems.
\newblock In {\em Conference on harmonic analysis in honor of {A}ntoni
  {Z}ygmund, {V}ol. {I}, {II} ({C}hicago, {I}ll., 1981)}, Wadsworth Math. Ser.,
  pages 590--606. Wadsworth, Belmont, CA, 1983.

\bibitem{GordinaLaetsch2016a}
Maria Gordina and Thomas Laetsch.
\newblock Sub-{L}aplacians on {S}ub-{R}iemannian {M}anifolds.
\newblock {\em Potential Anal.}, 44(4):811--837, 2016.

\bibitem{GrongThalmaier2016a}
Erlend Grong and Anton Thalmaier.
\newblock Curvature-dimension inequalities on sub-{R}iemannian manifolds
  obtained from {R}iemannian foliations: part {I}.
\newblock {\em Math. Z.}, 282(1-2):99--130, 2016.

\bibitem{GrongThalmaier2016b}
Erlend Grong and Anton Thalmaier.
\newblock Curvature-dimension inequalities on sub-{R}iemannian manifolds
  obtained from {R}iemannian foliations: part {II}.
\newblock {\em Math. Z.}, 282(1-2):131--164, 2016.

\bibitem{Grong}
Erlend Grong and Anton Thalmaier.
\newblock Stochastic completeness and gradient representations for
  sub-riemannian manifolds.
\newblock arxiv preprint \url{https://arxiv.org/abs/1605.00785}, 2016.

\bibitem{Hladky2012}
Robert~K. Hladky.
\newblock Connections and curvature in sub-{R}iemannian geometry.
\newblock {\em Houston J. Math.}, 38(4):1107--1134, 2012.

\bibitem{Hsu1995b}
Elton~P. Hsu.
\newblock Quasi-invariance of the {W}iener measure on the path space over a
  compact {R}iemannian manifold.
\newblock {\em J. Funct. Anal.}, 134(2):417--450, 1995.

\bibitem{Hsu1997c}
Elton~P. Hsu.
\newblock Logarithmic {S}obolev inequalities on path spaces over {R}iemannian
  manifolds.
\newblock {\em Comm. Math. Phys.}, 189(1):9--16, 1997.

\bibitem{Hsu1999a}
Elton~P. Hsu.
\newblock Analysis on path and loop spaces.
\newblock In {\em Probability theory and applications (Princeton, NJ, 1996)},
  volume~6 of {\em IAS/Park City Math. Ser.}, pages 277--347. Amer. Math. Soc.,
  Providence, RI, 1999.

\bibitem{Hsu2002a}
Elton~P. Hsu.
\newblock Quasi-invariance of the {W}iener measure on path spaces: noncompact
  case.
\newblock {\em J. Funct. Anal.}, 193(2):278--290, 2002.

\bibitem{HsuEltonBook}
Elton~P. Hsu.
\newblock {\em Stochastic analysis on manifolds}, volume~38 of {\em Graduate
  Studies in Mathematics}.
\newblock American Mathematical Society, Providence, RI, 2002.

\bibitem{HsuOuyang2009}
Elton~P. Hsu and Cheng Ouyang.
\newblock Quasi-invariance of the {W}iener measure on the path space over a
  complete {R}iemannian manifold.
\newblock {\em J. Funct. Anal.}, 257(5):1379--1395, 2009.

\bibitem{JerisonSanchez-Calle1987}
David Jerison and Antonio S\'anchez-Calle.
\newblock Subelliptic, second order differential operators.
\newblock In {\em Complex analysis, {III} ({C}ollege {P}ark, {M}d., 1985--86)},
  volume 1277 of {\em Lecture Notes in Math.}, pages 46--77. Springer, Berlin,
  1987.

\bibitem{MalliavinStochasticAnalysisBook}
Paul Malliavin.
\newblock {\em Stochastic analysis}, volume 313 of {\em Grundlehren der
  Mathematischen Wissenschaften [Fundamental Principles of Mathematical
  Sciences]}.
\newblock Springer-Verlag, Berlin, 1997.

\bibitem{MolinoBlog}
Gianmarco Molino.
\newblock Connections on foliated manifolds.
\newblock
  \url{http://gianmarcomolino.com/2018/02/23/connections-on-foliated-manifolds-1},
  2018.

\bibitem{MolinoBook1988}
Pierre Molino.
\newblock {\em Riemannian foliations}, volume~73 of {\em Progress in
  Mathematics}.
\newblock Birkh\"auser Boston, Inc., Boston, MA, 1988.
\newblock Translated from the French by Grant Cairns, With appendices by
  Cairns, Y. Carri\`ere, \'E. Ghys, E. Salem and V. Sergiescu.

\bibitem{PetersenBook3rdEdition}
Peter Petersen.
\newblock {\em Riemannian geometry}, volume 171 of {\em Graduate Texts in
  Mathematics}.
\newblock Springer, Cham, third edition, 2016.

\bibitem{Reinhart1959a}
Bruce~L. Reinhart.
\newblock Foliated manifolds with bundle-like metrics.
\newblock {\em Ann. of Math. (2)}, 69:119--132, 1959.

\bibitem{Sasaki1960}
Shigeo Sasaki.
\newblock On differentiable manifolds with certain structures which are closely
  related to almost contact structure. {I}.
\newblock {\em T\^ohoku Math. J. (2)}, 12:459--476, 1960.

\bibitem{Tanno1989}
Shukichi Tanno.
\newblock Variational problems on contact {R}iemannian manifolds.
\newblock {\em Trans. Amer. Math. Soc.}, 314(1):349--379, 1989.

\bibitem{ThalmaierCIRMLectures2016}
Anton Thalmaier.
\newblock The geometry of subelliptic diffusions.
\newblock Summer School CIRM Sub-Riemannian manifolds: from geodesics to
  hypoelliptic diffusion September 1 - 5, 2014, 2016.

\bibitem{TondeaurBook1988}
Philippe Tondeur.
\newblock {\em Foliations on {R}iemannian manifolds}.
\newblock Universitext. Springer-Verlag, New York, 1988.

\end{thebibliography}

\end{document}